\numberwithin{equation}{section}
\theoremstyle{plain}
\newtheorem{theorem}{Theorem}[section]
\newtheorem{lemma}[theorem]{Lemma}
\newtheorem{proposition}[theorem]{Proposition}
\newtheorem{corollary}[theorem]{Corollary}
\theoremstyle{definition}
\newtheorem{definition}[theorem]{Definition}
\newtheorem{remark}[theorem]{Remark}
\theoremstyle{remark}
\newcommand{\N}{\mathbb N}
\newcommand{\Z}{\mathbb Z}
\newcommand{\Q}{\mathbb Q}
\newcommand{\R}{\mathbb R}
\renewcommand{\S}{\mathbb S}
\newcommand{\supp}{{\rm supp\,}}
\renewcommand{\l}{{\mathcal L}}
\newcommand{\hno}{{\mathcal H}^{N-1}}
\newcommand{\dd}{\mathrm{d}}
\newcommand{\dhno}{\;\dd{\mathcal H}^{N-1}}
\newcommand{\dx}{\;\dd x}
\newcommand{\dy}{\;\dd y}
\newcommand{\e}{\varepsilon}
\newcommand{\del}{\delta}
\renewcommand{\o}{\Omega}
\newcommand{\rga}{\rightarrow}
\renewcommand{\sp}{H^1(\o;\R^d)}
\newcommand{\cR}{\mathcal R}
\newcommand{\cA}{\mathcal A}
\newcommand{\uCR}{u^C_{\e_n^{\cR}}}
\newcommand{\f}{\mathcal{F}}
\newcommand{\W}{\mathcal W}
\newcommand{\restr}{%
  \,\raisebox{-.127ex}{\reflectbox{\rotatebox[origin=br]{-90}{$\lnot$}}}\,%
}
\newcommand{\average}{{\mathchoice {\kern1ex\vcenter{\hrule height.4pt
width 6pt
depth0pt} \kern-9.7pt} {\kern1ex\vcenter{\hrule height.4pt width 4.3pt
depth0pt}
\kern-7pt} {} {} }}
\title[Homogenization and phase transitions]{A homogenization result in the gradient theory of phase transitions}
\author[Cristoferi, Fonseca,  Hagerty, Popovici]{Riccardo Cristoferi, Irene Fonseca, Adrian Hagerty, Cristina Popovici}
\keywords{}
\begin{document}

\begin{abstract}
A variational model in the context of the gradient theory for fluid-fluid phase transitions with small scale heterogeneities is studied.
In particular, the case where the scale $\e$ of the small homogeneities is of the same order of the scale governing the phase transition is considered.
The interaction between homogenization and the phase transitions process will lead, in the limit as $\e\to0$, to an anisotropic interfacial energy.
\end{abstract}

\maketitle

\section{Introduction}

In order to describe the behavior at equilibrium of a fluid under isothermal conditions confined in a container $\o\subset\R^N$ and having two stable phases (or a mixture of two immiscible and non-interacting fluids with two stable phases), Van der Waals in his pioneering work \cite{vanDW} (then rediscovered by Cahn and Hilliard in \cite{CH}) introduced the following Gibbs free energy per unit volume
\begin{equation}\label{eq:envdw}
E_\e(u):=\int_\o \left[\, W(u) + \e^2|\nabla u|^2 \,\right] \dx\,.
\end{equation}
Here $\e>0$ is a small parameter, $W:\R\to[0,+\infty)$ is a double well potential vanishing at two points, say $+1$ and $-1$ (the simplified prototype being $W(t):=(1-t^2)^2$), and $u:\o\to\R$ represents the phase of the fluid, where $u=+1$ correspond to one stable phase and $u=-1$ to the other one.
According to this gradient theory for first order phase transitions, observed stable configurations minimize the energy $E_\e$ under a mass constraint $\int_\o u=m$, for some fixed $m\in(-|\o|,|\o|)$.

The gradient term present in the energy \eqref{eq:envdw} provides a selection criterion among minimizers of $I:u\mapsto\int_\o W(u)\dx$. If neglected then every field $u$ such that $W(u)\equiv0$ in $\o$ and satisfying the mass constraint is a minimizer of $I$.
The singular perturbation $u\mapsto \e^2|\nabla u|^2$ provides a selection criterion and it competes with the potential term in that it penalizes inhomogeneities of $u$ and acts as a regularization for the problem.
In particular, the parameter $\e>0$ is related to the thickness of the transition layer between the two phases.
It was conjectured by Gurtin (see \cite{Gurtin}) that for $0<\e\ll1$ the minimizer $u_\e$ of the energy $E_\e$ will approximate a piecewise constant function, $u$, taking values in the zero set of the potential $W$, and minimizing the surface area $\hno(J_u)$ of the interface separating the two phases. Here $\hno$ denotes the $(N-1)$-dimensional Hausdorff measure and $J_u$ is the set of jump points of $u$.

Gurtin's conjecture has been proved by Modica in \cite{Modica} (see also the work of Sternberg \cite{Stern}) using $\Gamma$-convergence techniques introduced by De Giorgi and Franzoni in \cite{DGFran}.
In particular, it has been showed that
\[
\lim_{\e\to0}\frac{1}{\e}E_\e(u_\e)=\gamma \hno(J_u)\,,
\]
where the constant $\gamma>0$ plays the role of the surface energy density per unit area required to make a transition from one stable phase to the other, and it is given by
\[
\gamma:=\int_{-1}^1 \sqrt{W(t)} \dd t\,.
\]

Several variants of the Van der Waals-Cahn-Hilliard gradient theory for phase transitions have been studied analytically. Here we recall the extension to the case of $d$ non-interacting immiscible fluids, with a vector-valued density $u:\R^N\to\R^d$. In \cite{fonseca89} Fonseca and Tartar treated the case of two stable phases (\emph{i.e.}, the potential $W:\R^d\to[0,\infty)$ has two zeros), while the general case of several stable phases has been solved by Baldo in \cite{baldo}. In \cite{baldo} and \cite{fonseca89} it has been proved that the limit of a sequence $\{u_\e\}_{\e>0}$, where $u_\e$ is a minimizer of $E_\e$, is a minimal partition of the container $\o$, where each set satisfies a volume constraint and corresponds to a stable phase, \emph{i.e.}, a zero of $W$.

Other generalizations of \eqref{eq:envdw} include the work of Bouchitt\'{e} \cite{Bou}, who studied the case of a fluid where its two stable phases change from point to point, in order to treat the situation where the temperature of the fluid is not constant inside the container, but given a priori. From the mathematical point of view, this corresponds to considering the energy \eqref{eq:envdw} with a potential of the form $W(x,u)$ vanishing on the graphs of two non constant functions $z_1, z_2:\o\to\R^d$.
Fonseca and Popovici in \cite{FonPopo} dealt with the vectorial case of the energy \eqref{eq:envdw} where the term $|\nabla u|$ is substituted with a more general expression of the form $h(x,\nabla u)$, while the full coupled singular perturbed problem in the vectorial case, with the energy density of the form $f(x,u,\e\nabla u)$, has been studied by Barroso and Fonseca in \cite{BarFon}.
The case in which Dirichlet boundary conditions are considered was addressed by Owen, Rubinsten and Sternberg in \cite{OwRubSter}, while in \cite{Modicacont} Modica studied the case of a boundary contact energy.
We refer to the works \cite{Stern} of Sternberg and \cite{Ambr} of Ambrosio for the case where the zeros of the potential $W$ are generic compact sets.
Finally, in \cite{KohnStern} Kohn and Sternberg studied the convergence of local minimizers for singular perturbation problems.\\

This paper is part of an ongoing project aimed at studying the interaction between phase transitions and homogenization, namely when small scale heterogeneities are present in the fluids.
In particular, we treat the case of
a mixture of $d$ non-interacting immiscible fluids with two minimizing phases in isothermal conditions.
To be precise, for $\e>0$ we consider the energy
\begin{equation}\label{eq:energynotrescaled}
\mathcal{E}_\e(u):=\int_\o \left[ \, W\left(\frac{x}{\e},u(x)\right) + \e^2 |\nabla u(x)|^2  \,\right] \dx\,,
\end{equation}
where $W:\R^N\times\R^d\to[0,\infty)$ is a double well potential that is $1$-periodic in the first variable and with two zeros (see Section \ref{sec:mainresult} for more precise details on the hypotheses on $W$).
The small scale heterogeneities are modeled by the fast oscillations in the first variable of the potential $W$.

Since $\lim_{\e\to0}\min\mathcal{E}_\e=0$, in order to understand the behavior of minimizing sequences as $\e\to0$ we need to consider the rescaled energy $\f_\e:=\e^{-1}\mathcal{E}_\e$.
In the main result of this paper (see Theorem \ref{thm:main}) we identify the variational limit (in the sense of $\Gamma$-convergence) of the rescaled energies $\f_\e$ as $\e\to0$.
In particular, we will prove that the limiting energy is given by an anisotropic surface functional.
We refer to Section \ref{sec:mainresult} for the precise statement of the result.
Since the scaling $\e^{-1}$ of the energy coincides with the scaling of the fine oscillations in the potential, we expect to observe, in the limit, an interaction between the phase transition and the homogenization process.

The transition layer between the two phases has a thickness of size $\e$, which is the same scale of the micro-structures that form within this layer due to the potential term.
The main challenge of this work will be to handle the situation in which the orientation of the interface is not aligned with the directions of periodicity of the potential $W$.
This misalignment will give rise to the anisotropy in the limiting energy (see Figure \ref{figure:misalignment}).
In particular, the cell problem for the limiting energy density (see Definition \ref{def:sigma}) cannot be reduced to a one dimensional optimal profile problem, as in the case of the energy \eqref{eq:envdw}  (see Figure \ref{fig:layermicrostr}).
This phenomenon is well known in models for solid-solid phase transitions, when higher derivatives are considered in the energy (see, for instance, \cite{ContiFonLeo}).

\begin{figure}
\includegraphics[scale=0.6]{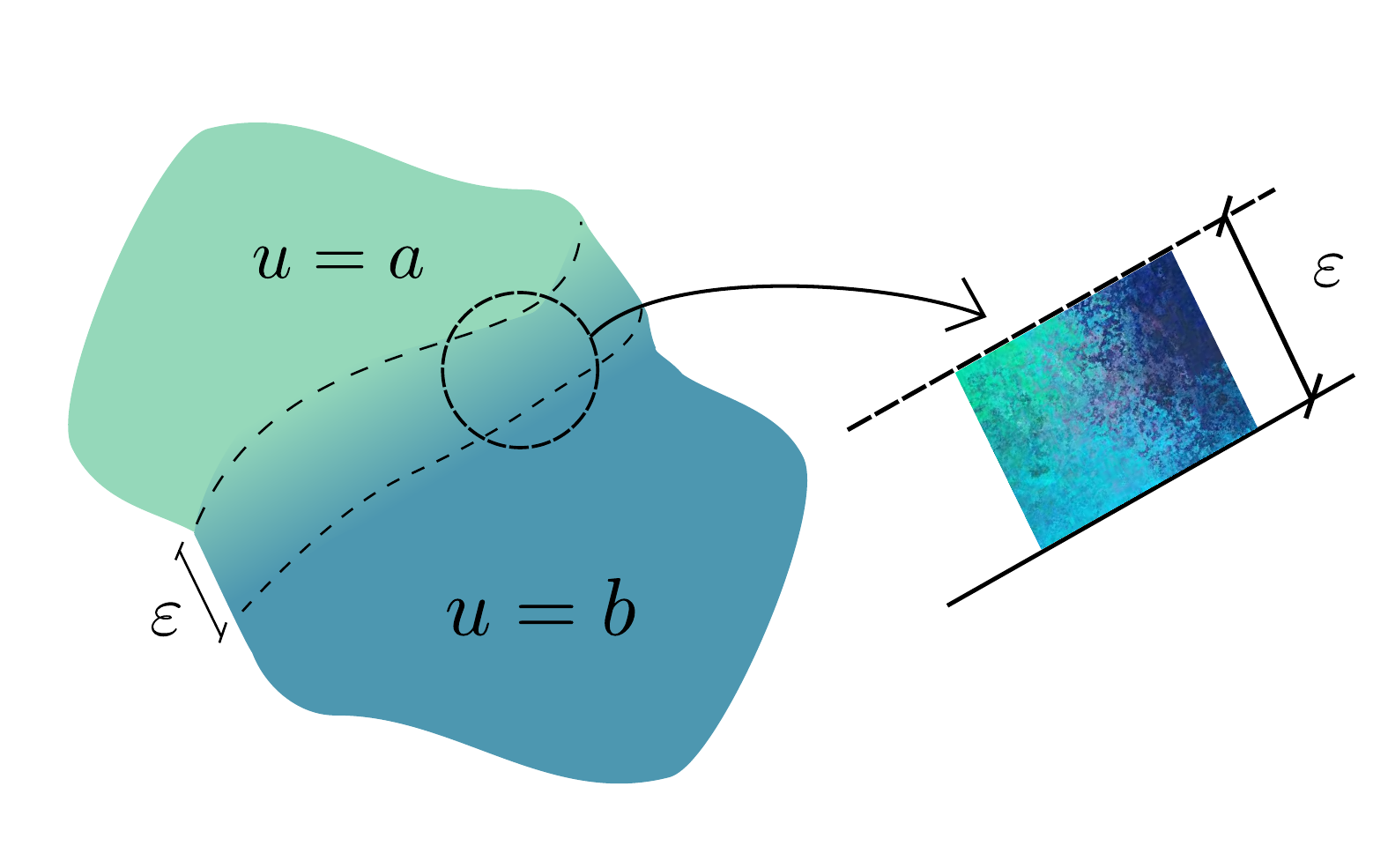}
\caption{In the transition layer of size $\e$ between the region where $u=a$ and $u=b$, microstructures with the same scale $\e$ will develop due to homogenization effects.}
\label{fig:layermicrostr}
\end{figure}

The case where different scalings are present, namely when the small heterogeneities are at a scale $\delta(\e)$ with
$\lim_{\e\to0}\frac{\delta(\e)}{\e}\in\{0,\infty\}$, will be treated in a forthcoming paper.
Moreover, the case in which the wells of the potential $W$ depend on the spatial variable $x$, modeling non-isothermal condition, is currently under investigation.\\

In the literature we can find other problems treating simultaneously phase transitions and homogenization.
In \cite{AnsBraChi2} (see also \cite{AnsBraChi1}) Ansini, Braides and Chiad\`{o} Piat considered the family of functionals
\[
\mathcal{S}_\e(u):=\int_\o \left[\, \frac{1}{\e}W(u(x))+\e f\left( \frac{x}{\delta(\e)}, \nabla u \right)  \,\right] \dx\,,
\]
and identified the $\Gamma$-limit in all three regimes
\begin{equation}\label{eq:regimes}
\lim_{\e\to0}\frac{\e}{\delta(\e)}=0\,,\quad\quad
\lim_{\e\to0}\frac{\e}{\delta(\e)}:=c>0\,,\quad\quad
\lim_{\e\to0}\frac{\e}{\delta(\e)}=+\infty\,,\\
\end{equation}
using abstract $\Gamma$-convergence techniques to prove the general form of the limiting functional, and more explicit arguments to derive the explicit expression in the three regimes (actually, in the first case they need to assume $\e^{3/2}\delta^{-1}(\e)\to0$ as $\e\to0$).

Moreover, we mention the articles \cite{DirMarNov2} and \cite{DirMarNov1} by Dirr, Lucia and Novaga regarding a model for phase transition with an additional bulk term modeling the interaction of the fluid with a periodic mean zero external field.
In \cite{DirMarNov2} they considered, for $\alpha\in(0,1)$, the family of functionals
\[
\mathcal{V}^{(1)}_\e(u):=\int_\o \left[\, \frac{1}{\e}W(u(x)) + \e|\nabla u|^2
    + \frac{1}{\e^\alpha}g\left(\frac{x}{\e^\alpha}\right)u(x)  \,\right] \dx\,,
\]
for some $g\in L^\infty(\o)$, while in \cite{DirMarNov1} they treated the case
\[
\mathcal{V}^{(2)}_\e(u):=\int_\o \left[\, \frac{1}{\e}W(u(x)) + \e|\nabla u|^2
    + \nabla v\left(\frac{x}{\e}\right)\cdot\nabla u(x)  \,\right] \dx\,,
\]
where $v\in W^{1,\infty}(\R^N)$.
Notice that $\mathcal{V}^{(1)}_\e$ is a particular case of $\mathcal{V}^{(2)}_\e$ when $\alpha=1$ and $v\in H^2(\o)$ has vanishing normal derivative on $\partial\o$. An explicit expression of the $\Gamma$-limit is provided in both cases.

The work \cite{BraZep} by Braides and Zeppieri is similar in spirit to the ongoing project of ours where we consider the case of the wells of $W$ depending on the space variable $x$. 
Indeed, in \cite{BraZep} the authors studied the asymptotic behavior of the family of functionals
\[
\mathcal{G}^{(k)}_\e(u):=\int_0^1 \left[ \, W^{(k)}\left(\frac{t}{\delta(\e)},u(x)\right) + \e^2 |u'(t)|^2  \,\right] \dd t\,,
\]
for $\delta(\e)>0$ and the potential $W^{(k)}$ defined, for $k\in[0,1)$, as
\[
W^{(k)}(t,s):=
\left\{
\begin{array}{ll}
W(s-k) & t\in\left(0,\frac{1}{2}\right),\\
W(s+k) & t\in\left(\frac{1}{2},1\right),\\
\end{array}
\right.
\]
with $W(t):=\min\{(t-1)^2, (t+1)^2\}$.
For $k\in(0,1)$ the fact that the zeros of $W^{(k)}$ oscillate at a scale of $\delta(\e)$ leads to the formation of microscopic oscillations, whose effect is studied by identifying the zeroth, the first and the second order $\Gamma$-limit expansions (with the appropriate rescaling) in the three regimes \eqref{eq:regimes}.

In the context of the gradient theory for solid-solid phase transition, we mention the work \cite{FrancMull} by Francfort and M\"{u}ller, where the asymptotic behavior of the energy
\[
\mathcal{L}_\e(u):=\int_\o \left[\, W\left(\frac{x}{\e^\gamma},\nabla u(x)\right)+\e^2 |\triangle u|^2  \,\right] \dx\,.
\]
for $\gamma>0$ is studied under some growth conditions on the potential $W$.

Finally, in \cite{LR} the authors studied the gradient flow of the energy \eqref{eq:energynotrescaled} in the case where the parameter $\e$ in front of the term $|\nabla u|^2$ is kept fixed and only the parameter $\e$ in $W(x/\e,u)$ is allowed to vary.


\subsection{Statement of the main result}\label{sec:mainresult}

In the following $Q\subset\R^N$ denotes the unit cube centered at the origin with faces orthogonal to the coordinate axes, $Q:=(-1/2,1/2)^N$.
Consider a double well potential $W:\R^N\times\R^d\to[0,\infty)$ satisfying the following properties:
\begin{itemize}
\item [(H0)] $x\mapsto W(x,p)$ is $Q$-periodic for all $p\in\R^d$,
\item [(H1)] $W$ is a Carath\'{e}odory function, \emph{i.e.},
\begin{itemize}
\item[(i)] for all $p\in\R^d$ the function $x\mapsto W(x,p)$ is measurable, 
\item[(ii)] for a.e. $x\in Q$ the function $p\mapsto W(x,p)$ is continuous,
\end{itemize}
\item [(H2)] there exist $a,b\in\R^d$ such that $W(x,p)=0$ if and only if $p\in\{a,b\}$, for a.e. $x\in Q$,
\item[(H3)] there exists a continuous function $\widetilde{W}:\R^d\to[0,\infty)$ such that $\widetilde{W}(p)\leq W(x,p)$ for a.e. $x\in Q$ and $\widetilde{W}(p)=0$ if and only if $p\in\{a,b\}$.
\item [(H4)] there exist $C>0$ and $q\geq2$ such that $\frac{1}{C}|p|^q-C\leq W(x,p)\leq C(1+|p|^q)$ for a.e. $x\in Q$ and all $p\in\R^d$.
\end{itemize}

\begin{remark}
The choice $q\geq 2$ is connected to the exponent we used in the term $|\nabla u|^2$ of the energy \eqref{eq:energynotrescaled}. If that term is substituted with $|\nabla u|^{\bar{q}}$, in (H4) we would need to take $q\geq\bar{q}$.

Hypotheses (H1), (H2) (H3) and (H4) conform with the prototypical potential
\[
W(x,p):=\sum_{i=1}^k \chi_{E_i}(x)W_i(p)\,,
\]
where $E_i\subset Q$ are measurable pairwise disjoint sets with $Q=\cup_{i=1}^k E_i$, and $W_i:\R^d\to[0,\infty)$ are continuous functions with quadratic growth at infinity and such that $W_i(p)=0$ if and only if $p\in\{a,b\}$, modeling the case of a heterogeneous mixture composed of $k$ different compositions. Here $\widetilde{W}$ in (H3) may be taken as $\widetilde{W}:=\min\{W_1,\dots,W_k\}$.
\end{remark}

Let $\o\subset\R^N$ be an open bounded set with Lipschitz boundary.
For $\e>0$ consider the energy $\f_\e: \sp\to[0,\infty]$ defined as
\begin{equation}\label{eq:energyeps}
\f_\e(u):=\int_\o \left[ \, \frac{1}{\e} W\left(\frac{x}{\e},u(x)\right) + \e |\nabla  u(x)|^2  \,\right] \dx\,,
\end{equation}
where $|\nabla u(x)|$ denotes the Euclidean norm of the $d\times N$ matrix $\nabla u(x)\in\R^{d\times N}$ (matrices with $d$ rows and $N$ columns).

We introduce some definitions.
For $\nu\in\S^{N-1}$, with $\S^{N-1}$ the unit sphere of $\R^N$, we denote by $\mathcal{Q}_\nu$ the family of cubes $Q_\nu$ centered at the origin with two faces orthogonal to $\nu$ and with unit length sides.

\begin{definition}\label{def:mollu0}
Let $\nu\in\S^{N-1}$ and define the function $u_{0,\nu}:\R^N\to\R^d$ as
\begin{equation}\label{eq:u0}
u_{0,\nu}(y):=\left\{
\begin{array}{ll}
a & \text{ if } y\cdot \nu\leq 0\,,\\
b & \text{ if } y\cdot \nu> 0\,.\\
\end{array}
\right.
\end{equation}
Fix a function $\rho\in C^\infty_c(B(0,1))$ with $\int_{\R^N}\rho(x)\dd x=1$, where $B(0,1)$ is the unit ball in $\mathbb{R}^N$.
For $T>0$, set $\rho_T(x):=T^N\rho(Tx)$ and
\begin{equation}\label{eq:u0conv}
\widetilde{u}_{\rho, T, \nu}:=\rho_{T}\ast u_{0,\nu}\,.
\end{equation}
When it is clear from the context, we will abbreviate $\widetilde{u}_{\rho, T,\nu}$ as $\widetilde{u}_{T,\nu}$.
\end{definition}

\begin{definition}\label{def:sigma}
We define the function $\sigma:\S^{N-1}\to[0,\infty)$ as
\[
\sigma(\nu):=\lim_{T\to\infty} g(\nu,T)\,,
\]
where
\[
g(\nu,T):=\frac{1}{T^{N-1}}\inf\Bigl\{\, \int_{T Q_\nu}\left[ W(y,u(y))+|\nabla u|^2  \right]\dd y \,:\,
    Q_\nu\in\mathcal{Q}_\nu,\, u\in \mathcal{C}(\rho, Q_\nu, T)   \,\Bigr\}\,,
\]
and
\[
\mathcal{C}(\rho, Q_\nu, T):=\Bigl\{ u\in H^1(TQ_\nu;\R^d)\,:\, u=\widetilde{u}_{\rho, 1,\nu} \text{ on } \partial\, (TQ_\nu) \,\Bigr\}\,.
\]
Just as before, if there is no possibility of confusion, we will write $\mathcal{C}(\rho, Q_\nu, T)$ as $\mathcal{C}(Q_\nu, T)$.
\end{definition}

\begin{figure}
\includegraphics[scale=0.8]{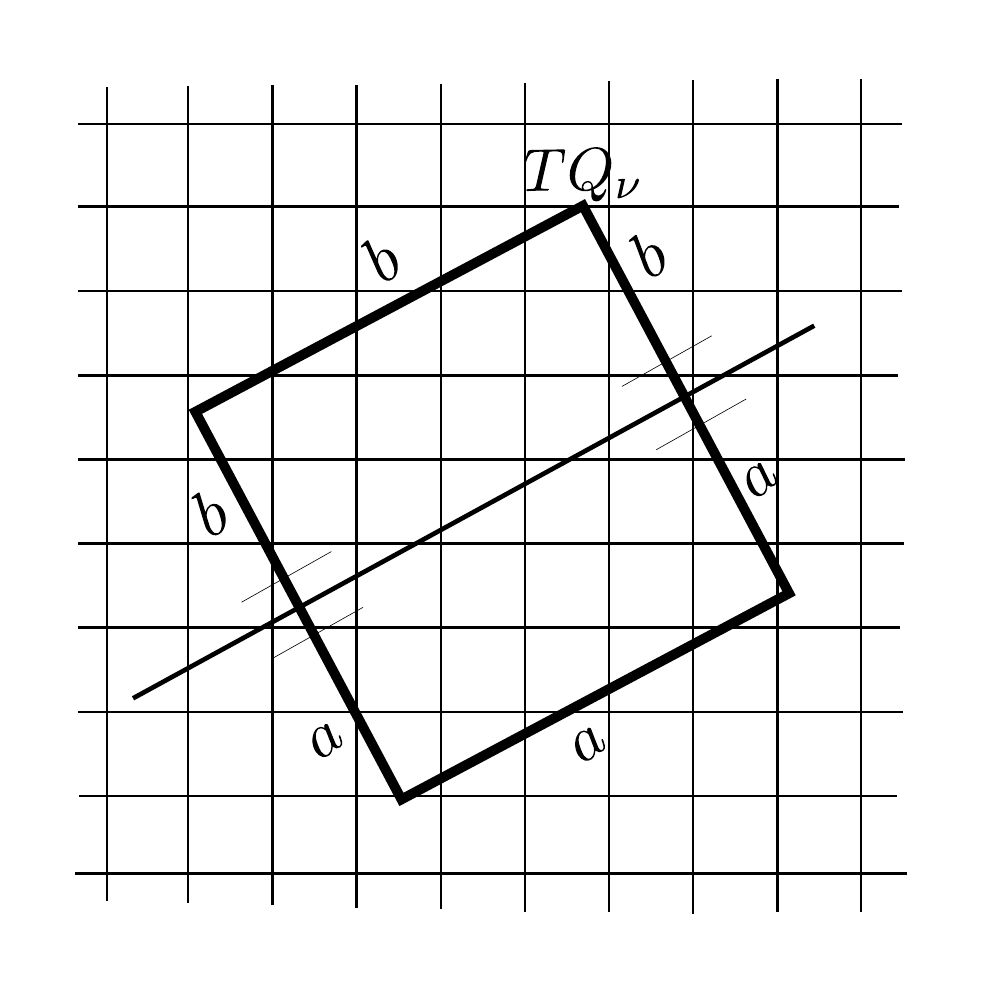}
\caption{The misalignment between a square $Q_\nu$ with two faces orthogonal to $\nu$ and the directions of periodicity of $W$ (the grid in the picture) is the reason for the anisotropy character of the limiting surface energy.}
\label{figure:misalignment}
\end{figure}

\begin{remark}\label{rem:sigma}
For every $\nu\in\S^{N-1}$, $\sigma(\nu)$ is well defined and finite (see Lemma \ref{lem:estimatesigma}) and its definition does not depend on the choice of the mollifier $\rho$ (see Lemma \ref{lem:indepmoll}). Moreover, the function $\nu\mapsto\sigma(\nu)$ is upper semi-continuous on $\S^{N-1}$ (see Proposition \ref{prop:sigma}).

Using \cite{BFM}, it is possible to prove that the infimum in the definition of $g(\nu,T)$ may be taken with respect to one fixed cube $Q_\nu\in\mathcal{Q}_\nu$. Namely, given $\nu\in\S^{N-1}$ and $Q_\nu\in\mathcal{Q}_\nu$ it holds
\[
\sigma(\nu)=\lim_{T\to\infty}\frac{1}{T^{N-1}}
    \inf\Bigl\{\, \int_{T Q_\nu}\left[ W(y,u(y))+|\nabla  u|^2  \right]\dd y \,:\,
    u\in \mathcal{C}(Q_\nu, T)   \,\Bigr\}\,.
\]
\end{remark}

\begin{remark}
In the context of homogenization when dealing with nonconvex potentials $W$ it is natural to consider, in the cell problem for the limiting density function $\sigma$, the infimum over all possible cubes $TQ_\nu$.
For instance, this was observed by M\"{u}ller in \cite{Mul}, where the asymptotic behavior as $\e\to\infty$ of the family of functionals
\[
G_\e(u):=\int_{\o} W\left( \frac{x}{\e},\nabla u \right)\ dx,
\]
defined for $u\in H^1(\o;\R^d)$, is studied. The limiting energy is of the form
\[
\int_\o \overline{W}(\nabla u(x))\dx,
\]
with
\[
\overline{W}(\lambda):=\inf_{k\in\N}\inf_{\psi\in W^{1,p}_0(kQ)}\frac{1}{k^N}\int_{kQ} W(y, \lambda+\nabla\psi(y))\dy.
\]
In the case where $W$ is convex, the infimum over $k\in\N$ is not needed (see \cite{Marc}).
\end{remark}

Consider the functional $\f_0:L^1(\o;\R^d)\to[0,\infty]$ defined by
\begin{equation}\label{eq:limitenergy}
\f_0(u):=
\left\{
\begin{array}{ll}
\displaystyle\int_{\partial^*A} \sigma(\nu_A(x))\,\dd\hno(x) & \text{ if } u\in BV(\o;\{a,b\}),\\
&\\
+\infty & \text{ else},
\end{array}
\right.
\end{equation}
where $A:=\{u=a\}$ and $\nu_A(x)$ denotes the measure theoretic external unit normal to the reduced boundary $\partial^*A$ of $A$ at $x$ (see Definition \ref{def:mun}).\\

We now state the main result of this paper that ensures compactness of energy bounded sequences and identifies the asymptotic behavior of the energies $\f_\e$.

\begin{theorem}\label{thm:main}
Let $\{\e_n\}_{n\in\N}$ be a sequence such that $\e_n\to0$ as $n\to\infty$.
Assume that (H0), (H1), (H2), (H3) and (H4) hold.
\begin{itemize}
\item[(i)] If $\{u_n\}_{n\in\N}\subset \sp$ is such that
\[
\sup_{n\in\N}\f_{\e_n}(u_n)<+\infty
\]
then, up to a subsequence (not relabeled), $u_n\to u$ in $L^1(\o;\R^d)$, where $u\in BV(\o;\{a,b\})$,
\item[(ii)]  As $n\to\infty$, it holds $\f_{\e_n}\stackrel{\Gamma-L^1}{\longrightarrow}\f_0$.
\end{itemize}
Moreover, the function $\sigma:\S^{N-1}\to[0,\infty)$ is continuous.
\end{theorem}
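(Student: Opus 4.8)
The plan is to prove Theorem~\ref{thm:main} by the standard scheme for $\Gamma$-convergence of Modica--Mortola type energies, the homogenization being encoded through the cell problem $g(\nu,\cdot)$ of Definition~\ref{def:sigma}: one establishes (a) the compactness in (i); (b) the $\liminf$ inequality; (c) the $\limsup$ inequality; and (d) the continuity of $\sigma$, which I would prove before completing (c) on general sets. For (a), note that (H3) gives $\f_{\e_n}(u_n)\ge\int_\o\bigl[\tfrac1{\e_n}\widetilde W(u_n)+\e_n|\nabla u_n|^2\bigr]\dx$, and the right-hand side is a vectorial Modica--Mortola functional with continuous, coercive potential $\widetilde W$ vanishing exactly on $\{a,b\}$; the compactness statement is then classical (cf.\ \cite{baldo,fonseca89}): using $\tfrac1{\e}\widetilde W(p)+\e|\xi|^2\ge 2\sqrt{\widetilde W(p)}\,|\xi|$ together with a Lipschitz map $\Phi\colon\R^d\to\R^m$ built from $\sqrt{\widetilde W}$ and non-degenerate away from the wells, one shows $\{u_n\}$ precompact in $L^1(\o;\R^d)$ with every limit in $BV(\o;\{a,b\})$.

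For the $\liminf$ inequality I would use the blow-up method. Given $u_n\to u$ in $L^1$ with $\sup_n\f_{\e_n}(u_n)<\infty$, set $A:=\{u=a\}$ and $\mu_n:=\bigl[\tfrac1{\e_n}W(\tfrac{\cdot}{\e_n},u_n)+\e_n|\nabla u_n|^2\bigr]\l^N\restr\o$; up to a subsequence $\mu_n\wtom\mu$, and it suffices to prove $\tfrac{\dd\mu}{\dd(\hno\restr\partial^*A)}(x_0)\ge\sigma(\nu_A(x_0))$ for $\hno$-a.e.\ $x_0\in\partial^*A$. Fixing such a point, with $\nu:=\nu_A(x_0)$, at which $A$ blows up to $\{y\cdot\nu\le0\}$, and choosing good cubes $Q_\nu(x_0,r)$ of side $r$ centered at $x_0$ with two faces orthogonal to $\nu$, the rescaling $x=x_0+\e_n z$ turns $\mu_n(Q_\nu(x_0,r))$ into $\e_n^{N-1}\int_{T_nQ_\nu}\bigl[W(\tfrac{x_0}{\e_n}+z,v_n(z))+|\nabla v_n(z)|^2\bigr]\dd z$, where $v_n(z):=u_n(x_0+\e_n z)$ and $T_n:=r/\e_n\to\infty$; by $Q$-periodicity the shift $\tfrac{x_0}{\e_n}$ reduces mod $\Z^N$ to a bounded vector, which displaces the cube negligibly at scale $T_n$. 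Since $v_n\to u_{0,\nu}$ in the rescaled averaged $L^1$ sense and the energies are bounded, a De Giorgi slicing over dyadic shells near $\partial(T_nQ_\nu)$ produces a competitor $\widetilde v_n\in\mathcal{C}(Q_\nu,T_n)$ with datum $\widetilde u_{\rho,1,\nu}$ and the same energy up to $o(T_n^{N-1})$; hence $\mu_n(Q_\nu(x_0,r))\ge g(\nu,r/\e_n)\,r^{N-1}-o(r^{N-1})$. Letting $n\to\infty$, then $r\to0$, and using $g(\nu,r/\e_n)\to\sigma(\nu)$ yields the claim, so $\f_0(u)\le\mu(\o)\le\liminf_n\f_{\e_n}(u_n)$.

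For a flat interface $u=u_{0,\nu}$, the $\limsup$ inequality is obtained by a tiling construction: for $\delta>0$ and $T$ large, choose near-optimal competitors for the cell energies relative to \emph{every} integer shift $\zeta$ of the periodicity cell of $W$ --- all lying in $\mathcal{C}(Q_\nu,T)$, hence all agreeing on $\partial(TQ_\nu)$ with the fixed datum $\widetilde u_{\rho,1,\nu}$, which depends on $y$ only through $y\cdot\nu$ --- tile the slab $\{|x\cdot\nu|<\e_nT/2\}$ by $\e_n$-scaled copies, giving the cell at lattice position $\ell$ the competitor for the relevant shift $\zeta(\ell)$ (the common datum makes the glued map locally $H^1$), and set the value $a$, resp.\ $b$, outside the slab. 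Using that $g(\nu,T)$ computed with the periodicity cell shifted by $\zeta$ still tends to $\sigma(\nu)$ as $T\to\infty$ uniformly in $\zeta$ --- a consequence of periodicity and the almost subadditivity of the cell energies, cf.\ Remark~\ref{rem:sigma} and \cite{BFM} --- and diagonalizing in $T\to\infty$, $\delta\to0$, one gets a recovery sequence with $\limsup_n\f_{\e_n}(u_n)\le\sigma(\nu)\,\hno(J_u\cap\o)$. This construction also gives the continuity of $\sigma$: it is upper semicontinuous by Proposition~\ref{prop:sigma}, so let $\nu_k\to\nu$ and work on $\o=B(0,1)$, where every hyperplane through the origin meets $B$ in a disc of the fixed $\hno$-measure $\omega_{N-1}$. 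For each $k$ the construction yields $u^k_n\to u_{0,\nu_k}$ in $L^1(B)$ with $\limsup_n\f_{\e_n}(u^k_n)\le\omega_{N-1}\sigma(\nu_k)$; extracting a subsequence of $k$ realizing $\liminf_k\sigma(\nu_k)$ and then a diagonal sequence $w_m\to u_{0,\nu}$ in $L^1(B)$ along a subsequence of $\{\e_n\}$, we obtain $\limsup_m\f_{\e_m}(w_m)\le\omega_{N-1}\liminf_k\sigma(\nu_k)$, while the $\liminf$ inequality gives $\omega_{N-1}\sigma(\nu)=\f_0(u_{0,\nu})\le\liminf_m\f_{\e_m}(w_m)$; hence $\sigma(\nu)\le\liminf_k\sigma(\nu_k)$. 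With continuity available, the $\limsup$ inequality for polyhedral $A$ follows by localizing the flat construction near each face and gluing across the $(N-2)$-dimensional edges at negligible cost, and for general $A\in BV(\o;\{a,b\})$ by polyhedral approximation $\chi_{A_k}\to\chi_A$ in $L^1$ with $|D\chi_{A_k}|(\o)\to|D\chi_A|(\o)$, invoking Reshetnyak's continuity theorem (legitimate since $\sigma$ extends continuously and positively $1$-homogeneously) to pass to the limit in $\f_0$, together with the $L^1$-lower semicontinuity of the $\Gamma$-$\limsup$.

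The main difficulty is the $\liminf$ inequality: since the heterogeneity scale equals the transition-layer scale, the blow-up at $x_0$ lands exactly on the cell problem at the diverging scale $T_n=r/\e_n$, and one must correct the blown-up map so that it attains the datum $\widetilde u_{\rho,1,\nu}$ while losing only $o(T_n^{N-1})$ of energy, knowing merely its $L^1$-closeness to $u_{0,\nu}$ and a uniform energy bound --- delicate precisely because $W$ is only measurable in $x$, which also rules out any rotation or small-perturbation argument and is the reason the cell problem carries an infimum over all cubes $Q_\nu\in\mathcal{Q}_\nu$.
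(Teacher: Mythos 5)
Your overall architecture --- compactness by reducing to an $x$-independent potential via (H3)--(H4), liminf by blow-up plus De Giorgi slicing to reach the cell formula, limsup by tiling the interface with $\e_n$-scaled near-optimal cell profiles and then passing to general $BV$ data by polyhedral approximation and Reshetnyak --- coincides with the paper's, and your compactness and liminf steps are essentially Propositions \ref{prop:comp} and \ref{prop:liminf}. Where you genuinely diverge is in the limsup and in the continuity of $\sigma$: you propose to prove lower semicontinuity of $\sigma$ \emph{before} the general limsup, by a diagonal argument on a ball combining flat-interface recovery sequences for arbitrary normals with the liminf inequality, and then to invoke Reshetnyak's continuity theorem; the paper instead needs only the upper semicontinuity of $\sigma$ (Proposition \ref{prop:sigma}) together with the u.s.c.\ form of Reshetnyak (Theorem \ref{thm:resh}), and obtains full continuity only at the end, from convexity of the $1$-homogeneous extension of $\sigma$ forced by lower semicontinuity of the $\Gamma$-limit. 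Likewise, the paper deliberately avoids your ``gluing across the $(N-2)$-dimensional edges'' by showing that the localized $\Gamma$-liminf is a Radon measure absolutely continuous with respect to $\hno\restr\partial^*A$ (Dal Maso--Fonseca--Leoni criterion, Lemma \ref{lem:coincidence}) and bounding its density face by face.

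The genuine gap sits exactly at the misalignment issue, which is the heart of the paper. In your flat-interface tiling, the cubes $\e_n\bigl(TQ_\nu+T\ell\bigr)$, with $\ell$ ranging over a lattice in $\nu^\perp$, are not commensurate with the periodicity lattice $\Z^N$ of $W$ unless $\nu$ (and the face directions of $Q_\nu$) are rational: the potential seen in the cell at position $\ell$ is $W(\cdot+s_\ell,\cdot)$ with $s_\ell$ a non-integer, in general equidistributed, shift. Your phrase ``near-optimal competitors for the cell energies relative to every \emph{integer} shift'' is therefore either vacuous (for $\zeta\in\Z^N$ periodicity gives the unshifted problem, and there would be no misalignment to handle) or not what your construction requires; what you actually need is that the cell formula with potential translated by an arbitrary $s\in[0,1)^N$ converges as $T\to\infty$, \emph{uniformly in} $s$, to the same value $\sigma(\nu)$, with competitors chosen uniformly near-optimal. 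This is plausible and could likely be proved with the comparison/cut-off techniques of Lemmas \ref{lem:estimatesigma} and \ref{lem:indepmoll}, but it cannot be dispatched by citing ``periodicity and almost subadditivity'': it is precisely the difficulty the paper circumvents by restricting first to $\nu\in\Lambda$, where Proposition \ref{prop:periodW} (via Witt's theorem) yields exact periodicity of $W$ along the face directions of a cube in $\mathcal{Q}^\Lambda_\nu$ with period $T\in\N$, by absorbing the residual in-cell shift $x_0/\e_n$ through the Riemann--Lebesgue lemma, and by reaching general normals through $\Lambda$-polyhedral approximation plus Reshetnyak u.s.c. Note also that in your scheme this unproven uniform-shift claim is load-bearing twice: your continuity proof of $\sigma$, and hence your passage from polyhedral to general sets via Reshetnyak continuity, both require flat recovery sequences for arbitrary (irrational) normals, so the whole limsup ultimately rests on it.
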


\begin{remark}
The limiting functional $\f_0$ is an anisotropic perimeter functional, whose limiting energy density $\sigma$ is defined via a cell problem describing the intricate interaction between homogenization and phases transition.
It is interesting to notice that in phase transitions models of the form
\[
\int_\o\left[\, \frac{1}{\e}W(u(x)) +\e h(\nabla u(x)) \,\right] \dx
\]
one would expect the limiting model to be isotropic if $h$ is.
Instead, in our case, the anisotropy originates from the mismatch between the square $Q$ related to the periodicity of $W(\cdot,p)$, and a square having two faces orthogonal to the normal $\nu$ to the interface.
\end{remark}

Once Theorem \ref{thm:main} is established, using well known arguments to deal with the mass constraint (see \cite{Modica}) and the result by Kohn and Sternberg (\cite{KohnStern}) for approximating isolated local minimizers, we also obtain the following.

\begin{corollary}\label{corollary}
Let $m\in(0,|\o|)$ and consider, for $\e>0$, the functionals $\mathcal{G}_\e:L^1(\o;\R^d)\to[0,+\infty]$ given by
\[
\mathcal{G}_\e(u):=
\left\{
\begin{array}{ll}
\mathcal{F}_\e(u) & \text{ if } \int_\o u(x)\dx=ma+(|\o|-m)b,\\
&\\
+\infty & \text{ otherwise }.
\end{array}
\right.
\]
Under the assumptions of Theorem \ref{thm:main} it holds that $\mathcal{G}_\e\stackrel{\Gamma-L^1}{\longrightarrow}\mathcal{G}_0$, where $\mathcal{G}_0:L^1(\o;\R^d)\to[0,+\infty]$ is given by
\[
\mathcal{G}_0(u):=
\left\{
\begin{array}{ll}
F_0(u) & \text{ if } \int_\o u(x)\dx=ma+(|\o|-m)b,\\
&\\
+\infty & \text{ otherwise }.
\end{array}
\right.
\]
In particular, every cluster point of a sequence of $\e$-minimizers for $\{\mathcal{G}_\e\}_{\e>0}$ is a minimizer for $\mathcal{G}_0$, and, moreover, every isolated local minimizer $u$ of $\mathcal{G}_0$ can be obtained as the $L^1$ limit of $\{u_\e\}_{\e>0}$, where $u_\e$ is a local minimizer of $\mathcal{G}_\e$.
\end{corollary}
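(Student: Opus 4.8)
The plan is to deduce Corollary~\ref{corollary} from Theorem~\ref{thm:main} by the usual arguments for handling a linear (mass) constraint, together with the abstract machinery of $\Gamma$-convergence, and then to invoke the Kohn--Sternberg result for the statement about local minimizers. Everything is soft once Theorem~\ref{thm:main} is in hand, except for one technical modification of the recovery sequence.

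First, for the $\Gamma$-convergence $\mathcal G_\e\to\mathcal G_0$, the $\liminf$ inequality is immediate: if $u_n\to u$ in $L^1(\o;\R^d)$ and $\mathcal G_{\e_n}(u_n)$ is, along a subsequence realizing the liminf, finite, then $\int_\o u_n=ma+(|\o|-m)b$, and $L^1$-convergence forces $\int_\o u=ma+(|\o|-m)b$, so $\mathcal G_0(u)=\f_0(u)\le\liminf_n\f_{\e_n}(u_n)=\liminf_n\mathcal G_{\e_n}(u_n)$ by Theorem~\ref{thm:main}(ii). For the $\limsup$ inequality we may assume $\mathcal G_0(u)<\infty$, \emph{i.e.}\ $u\in BV(\o;\{a,b\})$ with $\int_\o u=ma+(|\o|-m)b$; take a recovery sequence $\tilde u_n\to u$ for $\{\f_{\e_n}\}$ given by Theorem~\ref{thm:main}(ii), so $\limsup_n\f_{\e_n}(\tilde u_n)\le\f_0(u)$ and $\eta_n:=\int_\o\tilde u_n-(ma+(|\o|-m)b)\to0$. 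Restoring the exact constraint is the one technical point: one modifies $\tilde u_n$ on a small region $B_n$ contained in the interior of $\{u=a\}$ (or $\{u=b\}$) — where, for $n$ large, $\tilde u_n$ coincides with the constant $a$ (resp.\ $b$), so that the potential term there is already zero and the gradient there already vanishes — by adding a perturbation supported in $B_n$ whose integral over $B_n$ equals $\eta_n$, with $|B_n|\to0$ and with amplitude $\to0$ chosen, via the growth bound (H4) and the scaling of the recovery sequence, so that the extra energy is $o(1)$; this is the standard device of \cite{Modica} (scalar case) and \cite{baldo} (vectorial case). The modified sequence $u_n\to u$ then satisfies $\int_\o u_n=ma+(|\o|-m)b$ and $\limsup_n\mathcal G_{\e_n}(u_n)=\limsup_n\f_{\e_n}(u_n)\le\f_0(u)=\mathcal G_0(u)$.

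Next, the statements about minimizers follow from the fundamental theorem of $\Gamma$-convergence once equi-coercivity is checked. Fixing any $u^*\in BV(\o;\{a,b\})$ with $\int_\o u^*=ma+(|\o|-m)b$ (which exists since $m\in(0,|\o|)$) and taking a recovery sequence for $\{\mathcal G_\e\}$ at $u^*$ shows that $\inf_{\sp}\mathcal G_\e\le C$ for $\e$ small; hence any sequence $u_\e$ with $\mathcal G_\e(u_\e)\le\inf\mathcal G_\e+o(1)$ is energy bounded, so by Theorem~\ref{thm:main}(i) it is precompact in $L^1$, and since the constraint set is $L^1$-closed the family $\{\mathcal G_\e\}$ is equi-coercive. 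Combined with the $\Gamma$-convergence established above, this gives $\min\mathcal G_\e\to\min\mathcal G_0$ and that every cluster point of such a sequence of $\e$-minimizers is a minimizer of $\mathcal G_0$.

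Finally, for the local minimizers, let $u$ be an isolated local minimizer of $\mathcal G_0$, so there is $\delta>0$ with $\mathcal G_0(u)<\mathcal G_0(v)$ for every $v$ with $0<\|v-u\|_{L^1(\o;\R^d)}\le\delta$. Each functional $\mathcal G_\e$ is $L^1$-lower semicontinuous — lower semicontinuity of the integral functional $\f_\e$, which is convex in $\nabla u$ and has a nonnegative Carath\'eodory integrand, together with the closedness of the constraint — and $\{\mathcal G_\e\}$ is equi-coercive, so $\mathcal G_\e$ attains its minimum over the closed ball $\overline B_\delta(u):=\{v:\|v-u\|_{L^1}\le\delta\}$ at some $u_\e$ (the ball contains a recovery-sequence element, hence is nonempty in $\{\mathcal G_\e<\infty\}$, for $\e$ small). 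Using a recovery sequence for $u$, which eventually lies in the open ball $B_\delta(u)$, gives $\limsup_\e\mathcal G_\e(u_\e)\le\mathcal G_0(u)$; conversely, by equi-coercivity $u_\e$ is precompact and, by the $\liminf$ inequality, any cluster point $\bar u\in\overline B_\delta(u)$ satisfies $\mathcal G_0(\bar u)\le\liminf_\e\mathcal G_\e(u_\e)\le\mathcal G_0(u)$, whence $\bar u=u$ by the isolation hypothesis; thus $u_\e\to u$ in $L^1$, so $u_\e\in B_\delta(u)$ for $\e$ small, and therefore $u_\e$ is a genuine (not merely constrained-to-a-ball) local minimizer of $\mathcal G_\e$. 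This is exactly the Kohn--Sternberg argument of \cite{KohnStern}. The only genuinely delicate step in the whole deduction is the mass-restoring modification in the $\limsup$ inequality, where the interplay of the two scales enters through the rate of the deficit $\eta_n$; everything else is soft, given Theorem~\ref{thm:main}.
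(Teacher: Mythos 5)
Your overall architecture is the same as the paper's: the paper gives no detailed proof of the corollary, but simply invokes the classical mass-constraint arguments of \cite{Modica} (and, in the vectorial setting, \cite{baldo}) for the $\Gamma$-convergence of the constrained functionals, and the Kohn--Sternberg scheme \cite{KohnStern} for isolated local minimizers; your liminf argument, the equi-coercivity via Theorem \ref{thm:main}(i), and the constrained-minimization-on-a-ball argument for local minimizers are exactly these steps and are fine. The genuine gap is in the one step you actually flesh out, the mass-restoring modification of the recovery sequence. First, the recovery sequence $\tilde u_n$ furnished by Theorem \ref{thm:main}(ii) is produced abstractly (localization, Besicovitch differentiation, Urysohn and a diagonal argument in Proposition \ref{prop:limsup}), so you may not assert that for large $n$ it \emph{coincides} with the constant $a$ on a fixed small ball inside $\{u=a\}$; only $L^1$-closeness is known there. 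Second, even granting a region where $\tilde u_n\equiv a$, your bump with amplitude $t_n\to0$ supported on $B_n$ must satisfy $t_n|B_n|\approx|\eta_n|$, and its potential cost is $\frac{1}{\e_n}\int_{B_n}W\bigl(\tfrac{x}{\e_n},a+\psi_n(x)\bigr)\dx$. Hypotheses (H1)--(H4) provide no modulus of continuity of $W(\cdot,p)$ at $p=a$ that is uniform in the first variable (only the Carath\'eodory property and the bound $W\leq C(1+|p|^q)$), so the best available estimate is of order $|B_n|/\e_n$ for bounded amplitudes, and of order $\bigl(|B_n|+|\eta_n|^q|B_n|^{1-q}\bigr)/\e_n$ in general; optimizing in $|B_n|$ this is $o(1)$ only if $|\eta_n|=o(\e_n)$, a rate which the abstract recovery sequence does not provide. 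Third, the deficit $\eta_n=\int_\o\tilde u_n\dx-\bigl(ma+(|\o|-m)b\bigr)$ is a general vector of $\R^d$ (the recovery sequence does not take values in $\{a,b\}$), so it is not automatically parallel to $b-a$, and the classical scalar fixes (shifting the interface, or exchanging a small droplet of one pure phase for the other) cannot by themselves absorb it.

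The standard repair, and what the citation to \cite{Modica} and \cite{baldo} really stands for, is to restore the constraint \emph{inside} the construction rather than by post-processing an arbitrary recovery sequence: one proves the constrained limsup inequality first for $u$ with polyhedral interface, using constructions which equal $a$ and $b$ outside an $O(\e_n)$-neighborhood of the interface (so that the deficit is $O(\e_n)$ and all values stay in a bounded set), and then corrects the exact vector-valued mass by inserting, in the interior of the phases, finitely many small balls carrying rescaled transitions to the wells, whose radii are adjusted through a continuity (intermediate-value/degree) argument as in \cite{baldo}; the extra energy is of order $r_n^{N-1}\to0$ and no rate relation between $\eta_n$ and $\e_n$ is needed. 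Cutting and pasting these corrections onto the given sequence requires the slicing argument of Lemma \ref{lem:DeGslicing} to match boundary values with $o(1)$ cost, and the passage to general $u\in BV(\o;\{a,b\})$ uses the density and Reshetnyak arguments of Case 2 of Proposition \ref{prop:limsup}, after noting that the polyhedral approximants can themselves be adjusted to satisfy the exact constraint (their mass differs from that of $u$ along the direction $b-a$ only, since both take values in $\{a,b\}$). With this replacement of your bump construction, the rest of your proof goes through as written.
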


The proof of the Theorem \ref{thm:main} will be divided in several parts.
We would like to briefly comment on the main ideas we will use.

After recalling some preliminary concepts in Section \ref{sec:prel} and establishing auxiliary technical results in Section \ref{sec:techres},
we will prove the compactness result of Theorem \ref{thm:main} (i) (see Proposition \ref{prop:comp}) by reducing our functional to the standard Cahn-Hilliard energy \eqref{eq:envdw}.

In Proposition \ref{prop:liminf} we will obtain the liminf inequality by using the blow-up method introduced by Fonseca and M\"{u}ller in \cite{FonMul0} (see also \cite{FonMul}). Although this strategy can nowadays be considered standard, for clarity and completeness we include the argument.

The limsup inequality is presented in Proposition \ref{prop:limsup} and requires new geometric ideas.
This is due to the fact that the periodicity of $W$ in the first variable is an essential ingredient to build a recovery sequence.
It turns out (see Proposition \ref{prop:periodW}) that there exists a dense set $\Lambda\subset\S^{N-1}$ such that, for every $v_1\in\Lambda$ there exists $T_{v_1}\in\N$ and $v_2,\dots,v_N\in\Lambda$ for which $W(x+T_{v_1} v_i,p)=W(x,p)$ for a.e. $x\in\o$, all $p\in\R^N$ and all $i=1,\dots,N$, and such that $\{v_1,\dots,v_N\}$ is an orthonormal basis of $\R^N$.
Using this fact, in the first step of the proof of Proposition \ref{prop:limsup} we obtain a recovery sequence for the special class of functions $u\in BV(\o;\{a,b\})$ for which the normals to the interface $\partial^* A$, where $A:=\{u=a\}$, belong to $\Lambda$.
We decided to construct a recovery sequence only locally, in order to avoid the technical problem of gluing together optimal profiles for different normal directions to the transition layer. For this reason, we first prove that the localized version of the $\Gamma$-limit is a Radon measure absolutely continuous with respect to $\hno\restr\partial^* A$, and then we show that its density, identified using cubes whose faces are orthogonal to elements of $\Lambda$, is bounded above by $\sigma$.
Finally, in the second step we conclude using a density argument that will invoke Reshetnyak's upper semi-continuity theorem (see Theorem \ref{thm:resh}) and the upper semi-continuity of $\sigma$ (see Proposition \ref{prop:sigma}).


\section{Preliminaries}\label{sec:prel}

In this section we collect basic notions needed in the paper.


\subsection{Finite nonnegative Radon measures}

The family of finite nonnegative Radon measures on a topological space $(X,\tau)$ will be denoted by $\mathcal{M}(X)$.

\begin{definition}
Let $(X,\mathrm{d})$ be a $\sigma$-compact metric space. We say that a sequence $\{\mu_n\}_{n\in\N}\subset\mathcal{M}(X)$ \emph{weakly-${*}$ converges} to a finite nonnegative Radon measure $\mu$ if
\[
\int_X \varphi \,\dd\mu_n\to\int_X \varphi \,\dd\mu
\]
as $n\to\infty$, for all $\varphi\in C_0(X)$, where $C_0(X)$ is the completion in the $L^\infty$ norm of the space of continuous functions with compact support on $X$. In this case we write $\mu_n\stackrel{*}{\rightharpoonup}\mu$.
\end{definition}

The following compactness result for Radon measures is well known (see \cite[Proposition 1.202]{FonLeo}).

\begin{theorem}
Let $(X,\mathrm{d})$ be a $\sigma$-compact metric space and let $\{\mu_n\}_{n\in\N}\subset\mathcal{M}(X)$ be such that
$\sup_{n\in\N}\mu_n(X)<\infty$. Then the exist a subsequence (not relabeled) and $\mu\in\mathcal{M}(X)$ such that $\mu_n\stackrel{*}{\rightharpoonup}\mu$.
\end{theorem}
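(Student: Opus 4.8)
The plan is the classical one: extract the subsequence by a Cantor diagonal argument and then recover the limit measure from the limit functional via the Riesz representation theorem; I recall it here for completeness since the paper only cites it. First I would exploit $\sigma$-compactness to write $X=\bigcup_{k\in\N}K_k$ with each $K_k$ compact, and conclude that $C_0(X)$ is a \emph{separable} Banach space: each $C(K_k)$ is separable since $K_k$ is a compact metric space, and gluing these together by Tietze extensions and cutoff functions subordinate to the exhaustion $\{K_k\}_{k\in\N}$ yields a countable family $\{\p_j\}_{j\in\N}\subset C_c(X)$ that is dense in $C_c(X)$, hence in $C_0(X)$, for the uniform norm.

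Next, set $M:=\sup_{n\in\N}\mu_n(X)<\infty$. Since $\bigl|\int_X\p\,\dd\mu_n\bigr|\le M\|\p\|_\infty$ for every $\p\in C_0(X)$, each real sequence $\{\int_X\p_j\,\dd\mu_n\}_{n\in\N}$ is bounded, so a diagonal extraction produces a subsequence (not relabeled) along which $\Lambda(\p_j):=\lim_{n\to\infty}\int_X\p_j\,\dd\mu_n$ exists for all $j$. Because $\bigl|\int_X(\p-\psi)\,\dd\mu_n\bigr|\le M\|\p-\psi\|_\infty$ uniformly in $n$, a routine density argument upgrades this to convergence of $\int_X\p\,\dd\mu_n$ to a limit $\Lambda(\p)$ for \emph{every} $\p\in C_0(X)$, with $\Lambda$ linear and $\|\Lambda\|\le M$; moreover $\Lambda$ is positive, since $\p\ge0$ forces $\int_X\p\,\dd\mu_n\ge0$ for all $n$, hence $\Lambda(\p)\ge0$.

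Finally I would invoke the Riesz representation theorem for positive linear functionals on $C_c(X)$ to obtain a nonnegative Radon measure $\mu$ with $\Lambda(\p)=\int_X\p\,\dd\mu$ for all $\p\in C_c(X)$, hence for all $\p\in C_0(X)$ by density, which is precisely the assertion $\mu_n\stackrel{*}{\rightharpoonup}\mu$ along the extracted subsequence. Its finiteness follows from the identity $\mu(X)=\sup\{\Lambda(\p)\,:\,\p\in C_c(X),\ 0\le\p\le1\}\le\|\Lambda\|\le M$, so $\mu\in\mathcal{M}(X)$. I expect the only genuinely delicate step to be this last one: verifying that the weak-$*$ limit functional $\Lambda$ is represented by a bona fide \emph{finite} nonnegative Radon measure. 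One must use the form of the Riesz theorem adapted to the $\sigma$-compact metric setting, so that the representing measure is automatically Radon, and one must establish the finiteness $\mu(X)<\infty$ separately — a priori the representation only yields a locally finite measure — which is exactly where the uniform bound $M<\infty$ enters. The separability of $C_0(X)$ underpinning the diagonal extraction, although classical, is the other point worth spelling out.
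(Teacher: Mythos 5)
The paper does not prove this statement at all: it is quoted as ``well known'' with a citation to Fonseca--Leoni, and your argument is exactly the standard proof behind that citation (separability of $C_0(X)$, diagonal extraction over a countable dense family, uniform bound to pass to all of $C_0(X)$, positivity of the limit functional, Riesz representation, and finiteness $\mu(X)\le M$ from $\|\Lambda\|\le M$). So the proof is correct and there is nothing different to compare it against.

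One small point worth flagging, which is an imprecision inherited from the statement rather than a flaw in your reasoning: both your gluing construction (cutoff functions with compact support equal to $1$ on $K_k$) and the classical Riesz representation theorem for positive linear functionals on $C_c(X)$ require $X$ to be \emph{locally} compact, and $\sigma$-compactness of a metric space does not imply local compactness (e.g.\ $\Q$). In a locally compact $\sigma$-compact space one can choose the exhaustion with $K_k\subset\mathrm{int}\,K_{k+1}$, so that every compact set --- hence the support of every element of $C_c(X)$ --- lies in some $K_k$, which is what your density argument for $C_0(X)$ actually uses. Since every application in the paper takes $X$ to be an open subset of $\R^N$ or a cube, this extra hypothesis is satisfied and your proof goes through verbatim.
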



\subsection{Sets of finite perimeter}

We recall the definition and some well known facts about sets of finite perimeter (we refer the reader to \cite{AFP} for more details).

\begin{definition}
Let $E\subset\R^N$ with $|E|<\infty$ and let $\o\subset\R^N$ be an open set.
We say that $E$ has \emph{finite perimeter} in $\o$ if
\[
P(E;\o):=\sup\left\{\, \int_E \mathrm{div}\varphi \,\dd x \,:\, \varphi\in C^1_c(\o;\R^N)\,,\, \|\varphi\|_{L^\infty}\leq1  \,\right\}<\infty\,.
\]
\vspace{0\baselineskip}
\end{definition}

\begin{remark}\label{rem:defvar}
$E\subset\R^N$ is a set of finite perimeter in $\o$ if and only if $\chi_E\in BV(\o)$, \emph{i.e.}, the distributional derivative $D\chi_E$ is a finite vector valued Radon measure in $\o$, with
\[
\int_{\R^N} \varphi \,\dd D\chi_E=\int_E \mathrm{div}\varphi \,\dd x
\]
for all $\varphi\in C^1_c(\o;\R^N)$, and $|D\chi_E|(\o)=P(E;\o)$. \vspace{0.8\baselineskip}
\end{remark}

\begin{remark}
Let $\o\subset\R^N$ be an open set, let $a,b\in\R^d$, and let $u\in L^1(\o;\{a,b\})$.
Then $u$ is a function of \emph{bounded variation} in $\o$, and we write $u\in BV(\o;\{a,b\})$, if the set $\{u=a\}:=\{ x\in \o \,:\, u(x)=a\}$ has finite perimeter in $\o$. \vspace{0.5\baselineskip}
\end{remark}

\begin{definition}\label{def:mun}
Let $E\subset\R^N$ be a set of finite perimeter in the open set $\o\subset\R^N$. We define $\partial^* E$, the \emph{reduced boundary} of $E$, as the set of points $x\in\R^N$ for which the limit
\[
\nu_E(x):=-\lim_{r\to0}\frac{D\chi_E(x+rQ)}{|D\chi_E|(x+rQ)}
\]
exists and is such that $|\nu_E(x)|=1$.
The vector $\nu_E(x)$ is called the \emph{measure theoretic exterior normal} to $E$ at $x$. \vspace{0.5\baselineskip}
\end{definition}

We now recall the structure theorem for sets of finite perimeter due to De Giorgi (see~\cite[Theorem 3.59]{AFP} for a proof of the following theorem).

\begin{theorem}\label{thm:DeGiorgi}
Let $E\subset\R^N$ be a set of finite perimeter in the open set $\o\subset\R^N$.
Then
\begin{itemize}
\item[(i)] for all $x\in\partial^* E$ the set $E_r:=\frac{E-x}{r}$ converges locally in $L^1(\R^N)$ as $r\to0$ to the 
    halfspace orthogonal to $\nu_E(x)$ and not containing $\nu_E(x)$,
\item[(ii)] $D\chi_E=-\nu_E\,\hno\restr\partial^* E$,
\item[(iii)] the reduced boundary $\partial^* E$ is $\hno$-rectifiable, \emph{i.e.},
there exist Lipschitz functions $f_i:\R^{N-1}\to\R^N$, $i\in\N$, such that
\[
\partial^* E=\bigcup_{i=1}^\infty f_i(K_i)\,,
\]
where each $K_i\subset\R^{N-1}$ is a compact set.
\end{itemize}
\vspace{0\baselineskip}
\end{theorem}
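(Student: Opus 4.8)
\textbf{Proof proposal for Theorem \ref{thm:DeGiorgi} (De Giorgi's structure theorem).}

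The plan is to follow the classical route: first establish a uniform lower density estimate for $|D\chi_E|$ along the reduced boundary, then use it together with a compactness argument on blow-ups to prove (i), and finally deduce (ii) and (iii) from (i). I would begin with the \emph{De Giorgi lemma}: for $x\in\partial^*E$ the normalized blow-ups $E_r:=(E-x)/r$ have measures $D\chi_{E_r}$ with total variation $|D\chi_{E_r}|(B(0,R)) = r^{-(N-1)}|D\chi_E|(B(x,rR))$, and one shows by a Poincar\'e-type / isoperimetric argument that this quantity is bounded above and below by constants times $R^{N-1}$ for small $r$. The upper bound follows from choosing good competitors in the variational definition of perimeter; the lower bound is the essential estimate and is proved by the standard comparison between $|D\chi_{E_r}|(B(0,R))$ and the perimeter of the "radial projection" onto $\partial B(0,R)$, exploiting precisely the fact that at a point of the reduced boundary the averaged normals $D\chi_E(B(x,r))/|D\chi_E|(B(x,r))$ converge to a unit vector, so the measures $D\chi_{E_r}$ cannot oscillate in direction as $r\to0$.

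With these density bounds in hand, by the compactness theorem for $BV$ functions (bounded perimeter on balls, together with $L^1_{\rm loc}$ precompactness of the sets $E_r$ which have locally bounded measure) I can extract, along any sequence $r_j\to0$, a limit set $F$ with $\chi_{E_{r_j}}\to\chi_F$ in $L^1_{\rm loc}$ and $D\chi_{E_{r_j}}\stackrel{*}{\rightharpoonup} D\chi_F$. The key point is to identify $F$: lower semicontinuity of total variation plus the non-oscillation of the normals forces $D\chi_F = -\nu_E(x)\,|D\chi_F|$, i.e. $F$ is a set whose generalized normal is the constant vector $\nu_E(x)$ $|D\chi_F|$-a.e.; a short argument (a set of finite perimeter with constant normal is, up to null sets, a half-space) then shows $F$ is exactly the half-space orthogonal to $\nu_E(x)$ not containing $\nu_E(x)$. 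Since the limit is independent of the subsequence, the whole family $E_r$ converges, proving (i). I would then need to upgrade this to the statement for \emph{all} $x\in\partial^*E$ uniformly enough to get the measure-theoretic consequences, which is where one checks that the convergence in (i) also gives $|D\chi_{E_r}|\stackrel{*}{\rightharpoonup}\hno\restr\{y:y\cdot\nu_E(x)=0\}$ locally.

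For (ii), I would combine (i) with a Besicovitch-type differentiation argument: the blow-up analysis shows that at every $x\in\partial^*E$ the measure $|D\chi_E|$ has $(N-1)$-dimensional density equal to $1$, and that $D\chi_E(B(x,r))/|D\chi_E|(B(x,r))\to-\nu_E(x)$; together with the density lower bound (which shows $|D\chi_E|$ is concentrated on $\partial^*E$, i.e. $|D\chi_E|(\R^N\setminus\partial^*E)=0$) and the comparison of $|D\chi_E|$ with $\hno\restr\partial^*E$ via a Vitali--Besicovitch covering, one obtains $|D\chi_E| = \hno\restr\partial^*E$ and hence $D\chi_E = -\nu_E\,\hno\restr\partial^*E$. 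For (iii), rectifiability of $\partial^*E$ follows from (i): having a unique approximate tangent plane (the hyperplane $\nu_E(x)^\perp$) at every point of a set carrying a measure with positive finite $(N-1)$-density, one invokes the rectifiability criterion for such sets (the $(N-1)$-rectifiability theorem: a set with approximate tangent planes $\hno$-a.e. is a countable union of Lipschitz graphs up to an $\hno$-null set), which yields the Lipschitz parametrizations $f_i:\R^{N-1}\to\R^N$.

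The main obstacle is the lower density estimate underpinning the De Giorgi lemma, i.e. showing $|D\chi_E|(B(x,r))\geq c\,r^{N-1}$ at points of the reduced boundary: this is the one place where a genuine quantitative isoperimetric/relative-isoperimetric inequality on balls must be used, and it is the step that prevents $E_r$ from "disappearing" in the blow-up and thereby the step that makes the half-space identification nontrivial. Everything downstream — uniqueness of the blow-up, the measure identity (ii), and the rectifiability in (iii) — is then comparatively soft, relying on compactness, lower semicontinuity, and standard covering/differentiation theorems, all of which are available from the cited reference \cite{AFP}.
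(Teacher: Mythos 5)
The paper does not prove this statement: Theorem \ref{thm:DeGiorgi} is quoted verbatim from the literature, with the text explicitly deferring to \cite[Theorem 3.59]{AFP} for the proof, so there is no in-paper argument to compare against. Your outline is a faithful sketch of the standard De Giorgi blow-up proof given there (density estimates at reduced-boundary points, compactness of the rescaled sets, identification of the blow-up limit as a half-space via the constant-normal lemma, then (ii) by Besicovitch differentiation and (iii) by a rectifiability criterion), and the logical order and the named ingredients are all correct. Two small inaccuracies worth noting: the concentration $|D\chi_E|(\o\setminus\partial^*E)=0$ is not a consequence of the lower density bound but of Besicovitch--Radon--Nikod\'ym differentiation of $D\chi_E$ with respect to $|D\chi_E|$ (the polar decomposition has a unit-vector density $|D\chi_E|$-a.e., so $|D\chi_E|$-a.e.\ point lies in $\partial^*E$); and the lower perimeter density bound at points of $\partial^*E$ is usually obtained by first bounding $\min(|E\cap B(x,r)|,|B(x,r)\setminus E|)$ from below (using the non-oscillation of the averaged normals) and then applying the relative isoperimetric inequality on balls, rather than by a radial projection onto $\partial B(0,R)$, which is the argument one uses for perimeter \emph{minimizers}. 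Neither point affects the overall correctness of the sketch.
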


\begin{remark}\label{rem:newdefnormal}
Using the above result it is possible to prove that (see \cite[Proposition 2.2]{ADM})
\[
\nu_E(x)=-\lim_{r\to0}\frac{D\chi_E(x+rQ)}{r^{N-1}}
\]
for all $x\in\partial^* E$.
\end{remark}

Finally, we state a result due to Reshetnyak in the form we will need in this paper (for a statement and proof of the general case see, for instance, \cite[Theorem 2.38]{AFP}).

\begin{theorem}\label{thm:resh}
Let $\{E_n\}_{n=1}^\infty$ be a sequence of sets of finite perimeter in the open set $\o\subset\R^N$ such that $D\chi_{E_n}\stackrel{*}{\rightharpoonup} D\chi_E$ and $|D\chi_{E_n}|(\o)\to|D\chi_{E}|(\o)$, where $E$ is a set of finite perimeter in $\o$.
Let $f:\mathbb{S}^{N-1}\to[0,\infty)$ be an upper semi-continuous bounded function. Then
\[
\limsup_{n\to\infty}\int_{\partial^* E_n\cap \o} f\left(\nu_{E_n}(x)\right)\,\dd\hno(x)
    \leq \int_{\partial^* E\cap \o} f\left(\nu_E(x)\right)\,\dd\hno(x)\,.
\]
\vspace{0.5\baselineskip}
\end{theorem}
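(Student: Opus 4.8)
The plan is to obtain the statement as the upper semi-continuous half of Reshetnyak's theorem for vector measures (one may also simply invoke \cite[Theorem 2.38]{AFP}, but here is a self-contained sketch). Write $\mu_n:=D\chi_{E_n}$ and $\mu:=D\chi_E$, finite $\R^N$-valued Radon measures on $\o$; since in the paper $\o$ is bounded, $\overline\o$ is compact. By Theorem \ref{thm:DeGiorgi}(ii), $|\mu_n|=\hno\restr(\partial^*E_n\cap\o)$ and $|\mu|=\hno\restr(\partial^*E\cap\o)$, while the polar vectors are $\theta_n:=\tfrac{\dd\mu_n}{\dd|\mu_n|}=-\nu_{E_n}$ and $\theta:=\tfrac{\dd\mu}{\dd|\mu|}=-\nu_E$, with $|\theta_n|=|\theta|=1$ a.e. Setting $g(z):=f(-z)$ --- again upper semi-continuous and bounded on $\S^{N-1}$ --- the asserted inequality reads
\[
\limsup_{n\to\infty}\int_\o g(\theta_n)\,\dd|\mu_n|\ \le\ \int_\o g(\theta)\,\dd|\mu|\,.
\]

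The first step would be to upgrade the hypotheses to $|\mu_n|\wtom|\mu|$. Any subsequence of $\{|\mu_n|\}$ has a weakly-$*$ convergent sub-subsequence, say to $\tau\in\cM(\overline\o)$, by the compactness theorem for Radon measures. Testing $\mu_n\wtom\mu$ against $\varphi\in C_c(\o;\R^N)$ with $|\varphi|\le\psi\in C_c(\o;[0,1])$ gives $\int\varphi\,\dd\mu=\lim_n\int\varphi\,\dd\mu_n\le\liminf_n\int\psi\,\dd|\mu_n|=\int\psi\,\dd\tau$; taking the supremum over such $\varphi$ yields $\int\psi\,\dd|\mu|\le\int\psi\,\dd\tau$, hence $|\mu|\restr\o\le\tau$. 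Testing against the constant $1$ gives $\tau(\overline\o)=\lim_n|\mu_n|(\overline\o)=\lim_n|\mu_n|(\o)=|\mu|(\o)$, and since $|\mu|$ is concentrated on $\o$ we conclude $\tau=|\mu|$. As the limit is subsequence-independent, $|\mu_n|\wtom|\mu|$.

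Next I would lift everything to the compact metric space $\overline\o\times\S^{N-1}$, defining $\lambda_n\in\cM(\overline\o\times\S^{N-1})$ by $\int\Phi\,\dd\lambda_n:=\int_\o\Phi(x,\theta_n(x))\,\dd|\mu_n|(x)$. These have uniformly bounded mass $|\mu_n|(\o)$, so along a subsequence $\lambda_n\wtom\lambda$. Testing with $\Phi(x,z)=\varphi(x)$ and the previous step identify the first-coordinate marginal of $\lambda$ as $|\mu|$; testing with $\Phi(x,z)=\varphi(x)\cdot z$ and using $\mu_n\wtom\mu$ give $\int\varphi(x)\cdot z\,\dd\lambda=\int_\o\varphi\cdot\theta\,\dd|\mu|$ for all $\varphi\in C(\overline\o;\R^N)$; approximating the Borel field $\theta$ in $L^1(|\mu|;\R^N)$ by continuous fields $\varphi$ with $|\varphi|\le1$ (possible as $|\theta|=1$ a.e.) upgrades this to $\int\theta(x)\cdot z\,\dd\lambda(x,z)=\int_\o|\theta|^2\,\dd|\mu|=|\mu|(\o)=\lambda(\overline\o\times\S^{N-1})$. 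Since $\theta(x)\cdot z\le1$ with equality if and only if $z=\theta(x)$ (both unit vectors), $\int(1-\theta(x)\cdot z)\,\dd\lambda=0$ with nonnegative integrand, so $\lambda$ is concentrated on the graph $\{(x,\theta(x))\}$ and $\lambda=(\mathrm{id},\theta)_{\#}|\mu|$. Hence, for every $h\in C(\S^{N-1})$,
\[
\int_\o h(\theta_n)\,\dd|\mu_n|=\int h(z)\,\dd\lambda_n\ \longrightarrow\ \int h(z)\,\dd\lambda=\int_\o h(\theta)\,\dd|\mu|\,,
\]
and, the limit being independent of the subsequence, this holds along the full sequence.

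Finally I would pass from continuous to upper semi-continuous integrands by approximation from above: the sup-convolutions $g_k(z):=\sup_{w\in\S^{N-1}}\{g(w)-k|z-w|\}$ are $k$-Lipschitz, satisfy $g\le g_{k+1}\le g_k\le\sup g$, and decrease pointwise to $g$ by upper semi-continuity. Since $g\le g_k$,
\[
\limsup_{n\to\infty}\int_\o g(\theta_n)\,\dd|\mu_n|\le\limsup_{n\to\infty}\int_\o g_k(\theta_n)\,\dd|\mu_n|=\int_\o g_k(\theta)\,\dd|\mu|\,,
\]
and letting $k\to\infty$ with dominated convergence (using $|\mu|(\o)<\infty$) gives the claimed inequality; unravelling $g=f(-\,\cdot\,)$ and $\theta_n=-\nu_{E_n}$ recovers the statement. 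I expect the third paragraph to be the main obstacle: identifying the weak-$*$ limit $\lambda$ of the lifted measures as concentrated on the graph of $\theta$. This is exactly where the mass-convergence hypothesis $|\mu_n|(\o)\to|\mu|(\o)$ is used --- without it the first marginal of $\lambda$ could exceed $|\mu|$ and the barycenter computation would fail --- and it combines the rigidity of equality in the Cauchy--Schwarz inequality for unit vectors with an $L^1$-density argument made necessary by the fact that $\theta$ is merely Borel.
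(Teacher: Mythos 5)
Your argument is correct, and it is essentially the standard proof of the result: the paper itself gives no proof of Theorem \ref{thm:resh}, merely citing \cite[Theorem 2.38]{AFP}, whose argument --- upgrading to $|D\chi_{E_n}|\wtom|D\chi_E|$, lifting to measures on $\overline\o\times\S^{N-1}$, using the mass convergence plus the equality case of Cauchy--Schwarz to show the limit measure is concentrated on the graph of the polar vector, and then approximating the upper semi-continuous integrand monotonically from above by Lipschitz functions --- is exactly what you reconstruct. The one point to tighten is that $D\chi_{E_n}\wtom D\chi_E$ is a priori tested only against $C_c(\o;\R^N)$, so in the barycenter step you should either keep the test fields in $C_c(\o;\R^N)$ (which suffices, since such fields with $|\varphi|\le1$ are dense in the relevant $L^1(|D\chi_E|;\R^N)$ approximation of $\theta$) or first observe that the convergence $|D\chi_{E_n}|\wtom|D\chi_E|$ in $\cM(\overline\o)$ together with $|D\chi_E|(\partial\o)=0$ prevents mass from escaping to $\partial\o$ and thus allows test functions in $C(\overline\o;\R^N)$.
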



\subsection{$\Gamma$-convergence}

We refer to \cite{Braides} and \cite{dalmaso93} for a complete study of $\Gamma$-convergence in metric spaces.

\begin{definition}\label{def:gc}
Let $(X,\mathrm{m})$ be a metric space. We say that $F_n:X\to[-\infty,+\infty]$ $\Gamma$-converges to $F:X\to[-\infty,+\infty]$,
and we write $F_n\stackrel{\Gamma-\mathrm{m}}{\longrightarrow} F$, if the following hold:
\begin{itemize}
\item[(i)] for every $x\in X$ and every $x_n\to x$ we have
\[
F(x)\leq\liminf_{n\to\infty} F_n(x_n)\,,
\]
\item[(ii)] for every $x\in X$ there exists $\{x_n\}_{n=1}^\infty\subset X$ (so called \emph{a recovery sequence}) with $x_n\to x$ such that
\[
\limsup_{n\to\infty} F_n(x_n)\leq F(x)\,.
\]
\end{itemize} \vspace{0\baselineskip}
\end{definition}

In the proof of the limsup inequality we will need to show that a certain set function is actually (the restriction to the family of open sets of) a finite Radon measure. The classical way to prove this is by using the De Giorgi-Letta coincidence criterion (see \cite{DGLetta}), namely to show that the set function is inner regular as well as super and sub additive.
In this paper we will use a simplified coincidence criterion due to Dal Maso, Fonseca and Leoni (see \cite[Corollary 5.2]{DMFonLeo}).
Given $\o\subset\R^N$ an open set, we denote by $\cA(\o)$ the family of all open subsets of $\o$.

\begin{lemma}\label{lem:coincidence}
Let $\lambda:\cA(\o)\to[0,\infty)$ be an increasing set function such that:
\begin{enumerate}
\item[(i)] for all $U,V,W\in\cA(\o)$ with $\overline{U} \subset V \subset W$ it holds
\[
\lambda(W) \leq \lambda(W \setminus \overline{U}) +\lambda(V)\,,
\]
\item[(ii)] $\lambda(U\cap V)=\lambda(U)+\lambda(V)$, for all $U,V\in\cA(\o)$ with $U\cap V=\emptyset$,
\item[(iii)] there exists a measure $\mu:\mathcal{B}(\o)\to[0,\infty)$ such that
\[
\lambda(U)\leq\mu(U)
\]
for all $U\in\cA(\o)$, where $\mathcal{B}(\o)$ denotes the family of Borel sets of $\o$.
\end{enumerate}
Then $\lambda$ is the restriction to $\cA(\o)$ of a measure defined on $\mathcal{B}(\o)$.
\end{lemma}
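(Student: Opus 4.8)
The plan is to verify that $\lambda$ satisfies the hypotheses of the De Giorgi--Letta coincidence criterion (see \cite{DGLetta}): an increasing set function on $\cA(\o)$ which is inner regular, subadditive, and superadditive on disjoint open sets is the restriction to $\cA(\o)$ of a Borel measure. Of these, monotonicity is assumed outright; (iii) gives $\lambda(\emptyset)\le\mu(\emptyset)=0$; and (ii) (read as $\lambda(U\cup V)=\lambda(U)+\lambda(V)$ for disjoint open $U,V$) gives additivity on disjoint open sets, hence superadditivity. Thus the real work is to extract inner regularity and finite subadditivity from (i) together with the domination (iii) by a \emph{finite} Borel measure.

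For inner regularity, I would fix $A\in\cA(\o)$ and $\e>0$, take an exhaustion $U_k\Subset U_{k+1}\Subset A$ with $\bigcup_kU_k=A$, and note that, since $\mu$ is finite and $A\setminus\overline{U_k}\downarrow\emptyset$, continuity from above gives $\mu(A\setminus\overline{U_k})\to0$; so we may fix $k$ with $\mu(A\setminus\overline{U_k})<\e$. Applying (i) to the triple $\overline{U_k}\subset U_{k+1}\subset A$ and then (iii),
\[
\lambda(A)\le\lambda(A\setminus\overline{U_k})+\lambda(U_{k+1})\le\mu(A\setminus\overline{U_k})+\lambda(U_{k+1})<\e+\lambda(U_{k+1})\,,
\]
whence $\lambda(A)\le\sup\{\lambda(U):U\Subset A\text{ open}\}$; the reverse inequality is monotonicity.

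For subadditivity, let $A,B\in\cA(\o)$, $W:=A\cup B$, and $\e>0$. By inner regularity pick $V\Subset W$ with $\lambda(V)>\lambda(W)-\e$. Covering the compact set $\overline V$ by finitely many balls contained in $A$ or in $B$, I obtain open sets $A_0\subset A$, $B_0\subset B$ with $\overline V\subset A_0\cup B_0$; then $K:=\overline{V\setminus B_0}$ is compact and, since $\overline V\subset A_0\cup B_0$, one checks $K\subset A_0$. I then choose $K\subset P'\Subset P\Subset A_0$ with $\mu(P\setminus\overline{P'})<\e$ and $P_2\Subset B_0$ with $\mu(B_0\setminus\overline{P_2})<\e$ (both possible since $\mu$ is finite). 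As $V\setminus B_0\subset K\subset P'$, one has $V\subset B_0\cup P'$, so by monotonicity and two applications of (i) --- first to the triple $\overline{P'}\subset P\subset B_0\cup P$, then to $\overline{P_2}\subset B_0\subset B_0\cup(P\setminus\overline{P'})$ --- together with (iii),
\[
\lambda(V)\le\lambda(B_0\cup P)\le\lambda\bigl(B_0\cup(P\setminus\overline{P'})\bigr)+\lambda(P)\le\mu(B_0\setminus\overline{P_2})+\mu(P\setminus\overline{P'})+\lambda(B_0)+\lambda(P)<2\e+\lambda(B)+\lambda(A)\,,
\]
using $P\subset A$ and $B_0\subset B$. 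Hence $\lambda(W)-\e<\lambda(A)+\lambda(B)+2\e$, and letting $\e\to0$ yields $\lambda(A\cup B)\le\lambda(A)+\lambda(B)$. With inner regularity, subadditivity, and superadditivity in hand, the De Giorgi--Letta criterion gives that $\lambda$ is the restriction to $\cA(\o)$ of a measure on $\mathcal B(\o)$.

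The main obstacle is the subadditivity step. Hypothesis (i) only permits one to remove from an open set a piece whose closure lies \emph{well inside} one of the two sets $A,B$, so a single application cannot separate $A\cup B$ into an $A$-part and a $B$-part. The device is to first replace $W$ by an interior approximation $V$, split $\overline V$ with a margin into open pieces $A_0$ and $B_0$, and then apply (i) twice, each time absorbing a ``thin'' leftover region by its (small) $\mu$-measure via (iii); this is exactly where the finiteness of $\mu$ and the nested form of (i) are essential.
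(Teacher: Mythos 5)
Your proposal is correct, but there is nothing in the paper to compare it against: the paper does not prove Lemma \ref{lem:coincidence} at all, it simply quotes it as the ``simplified coincidence criterion'' of Dal Maso, Fonseca and Leoni \cite[Corollary 5.2]{DMFonLeo}, to be used in place of the classical De Giorgi--Letta criterion. What you have written is a self-contained derivation of that corollary: you reduce (i)--(iii) to the classical De Giorgi--Letta hypotheses (monotonicity, $\lambda(\emptyset)=0$, superadditivity on disjoint open sets, inner regularity, finite subadditivity), which is exactly the reduction the cited reference is meant to encapsulate. I checked the two nonroutine points and they hold: first, your claim $K=\overline{V\setminus B_0}\subset A_0$ is correct (if $x\in K\cap B_0$, an open neighborhood of $x$ inside $B_0$ could not meet $V\setminus B_0$, contradicting $x\in\overline{V\setminus B_0}$; hence $K\cap B_0=\emptyset$ and $K\subset\overline V\subset A_0\cup B_0$ forces $K\subset A_0$); second, the existence of the collar sets $K\subset P'\subset\subset P\subset\subset A_0$ with $\mu(P\setminus\overline{P'})<\e$, and of $P_2\subset\subset B_0$ with $\mu(B_0\setminus\overline{P_2})<\e$, follows as you say from the finiteness of $\mu$ (e.g.\ level sets of $\dist(\cdot,K)$ give disjoint annuli, at least one of small $\mu$-measure, and continuity from above handles $P_2$). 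The two nested applications of (i), absorbing the leftover open sets via (iii), then give subadditivity, and your inner-regularity argument (exhaustion plus (i) and (iii)) is likewise sound; note also that you correctly read the misprint $\lambda(U\cap V)$ in (ii) as $\lambda(U\cup V)$, which is what both superadditivity and the paper's later use require. So the proposal supplies a correct proof of a statement the paper leaves to a citation; its only external input is the classical De Giorgi--Letta theorem, which the paper itself invokes as standard.
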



\section{Preliminary technical results}\label{sec:techres}

The first result relies on De Giorgi's slicing method (see \cite{DG}), and it allows to adjust the boundary conditions of a given sequence of functions without increasing the energy, by carefully selecting where to make the transition from the given function to one with the right boundary conditions.
Although the argument is nowadays considered to be standard, we include it here for the convenience of the reader.

For $\e>0$, we localize the functional $\f_\e$ by setting
\[
\f_\e(u,A):=\int_A \left[ \, \frac{1}{\e} W\left(\frac{x}{\e},u(x)\right) + \e |\nabla  u(x)|^2  \,\right] \dx\,,
\]
where $A\in\mathcal{A}(\o)$ and $u\in H^1(A;\R^d)$.
Also, for $j\in\N$, we define
\[
A^{(j)}:=\{ x\in A \,:\, \mathrm{d}(x,\partial A)<1/j \}\,.
\]

\begin{lemma}\label{lem:DeGslicing}
Let $D\in\mathcal{A}(\o)$ be a cube with $0\in D$ and let $\nu\in\S^{N-1}$.
Let $\{D_k\}_{k\in\N}\subset\mathcal{A}(\o)$ with $D_k\subset D$ be cubes,
let $\{\eta_k\}_{k\in\N}$ with $\eta_k\to0$ as $k\to\infty$, and let
$u_k\in H^1(D_k;\R^d)$, with $k\in\N$, satisfy
\begin{itemize}
\item[(i)] $\chi_{D_k}\to\chi_D$ in $L^1(\R^N)$,
\item[(ii)] $u_k\chi_{D_k}\to u_{0,\nu}$ in $L^1(D;\R^d)$,
\item[(iii)] $\sup_{k\in\N}\f_{\eta_k}(u_k,D_k)<\infty$.
\end{itemize}
Let $\rho\in C^\infty_c(B(0,1))$ with $\int_{\R^N} \rho(x)\dd x=1$.
Then there exists a sequence $\{w_k\}_{k\in\N}\subset H^1(D;\R^d)$, with $w_k=\widetilde{u}_{\rho,1/\eta_k,\nu}$ in $D_k^{(j_k)}$, where $\widetilde{u}_{\rho, 1/\eta_k,\nu}$ is defined as in \eqref{eq:u0conv}, for some $\{j_k\}_{k\in\N}$ with $j_k\to\infty$ as $k\to\infty$, such that
\[
\liminf_{k\to\infty}\f_{\eta_k}(u_k,D_k)\geq \limsup_{k\to\infty}\f_{\eta_k}(w_k,D)\,.
\]
Moreover, $w_k\to u_{0,\nu}$ in $L^q(D;\R^d)$ as $k\to\infty$, where $q\geq2$ is as in (H4).
\end{lemma}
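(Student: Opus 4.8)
The plan is to apply a De Giorgi-style slicing argument in the direction normal to the faces of the cube, interpolating between the given $u_k$ and the fixed boundary profile $\widetilde{u}_{\rho,1/\eta_k,\nu}$ across a thin collar near $\partial D$, and to choose the collar so that the energy contribution of the cut-off region is negligible in the limit. First I would fix, for each $k$, an integer $M_k\to\infty$ slowly enough (say $M_k=o(\min\{j : D_k^{(j)}\ne\emptyset\})$, and $M_k$ increasing) and subdivide a fixed collar neighborhood of $\partial D$ of width $\delta$ into $M_k$ disjoint sub-collars $S_k^1,\dots,S_k^{M_k}$ of width $\delta/M_k$ each. Since $\sup_k \f_{\eta_k}(u_k,D_k)<\infty$ by (iii) and, by hypothesis (H4), $|\nabla u_k|^2$ and $W(x/\eta_k,u_k)$ are controlled (in particular $\|\nabla u_k\|_{L^2(D_k)}^2 = O(1/\eta_k)$ and $\|u_k\|_{L^q}$ bounded on $D_k$), a pigeonhole/averaging argument produces, for each $k$, an index $i_k$ such that the energy $\f_{\eta_k}(u_k, S_k^{i_k}\cap D_k)$ together with $\|\nabla u_k\|_{L^2(S_k^{i_k}\cap D_k)}^2$ is at most $C/M_k$ times the total, hence $\to 0$ as $k\to\infty$ after we note the total is $O(1)$; the loss from rescaling the gradient by $M_k/\delta$ in the interpolation must be absorbed, so in fact I need $\|\nabla u_k\|_{L^2(S_k^{i_k})}^2 \le \frac{C}{M_k}\cdot\frac{1}{\eta_k}$ and the interpolation cost is $\eta_k\cdot (M_k/\delta)^2\cdot \|u_k - \widetilde{u}_{\rho,1/\eta_k,\nu}\|_{L^2(S_k^{i_k})}^2$, which I control using (ii) ($u_k\to u_{0,\nu}$ in $L^1$, upgraded to $L^q$ by the growth bound plus equi-integrability, and $\widetilde{u}_{\rho,1/\eta_k,\nu}\to u_{0,\nu}$ as well).

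Concretely, in the chosen sub-collar $S_k^{i_k}$ I define $w_k$ by a convex-combination (affine-in-the-collar-coordinate) interpolation: $w_k := \theta_k u_k + (1-\theta_k)\widetilde{u}_{\rho,1/\eta_k,\nu}$, where $\theta_k$ is a Lipschitz cutoff equal to $1$ on the inner side of $S_k^{i_k}$ and $0$ on the outer side, with $|\nabla\theta_k|\le C M_k/\delta$; set $w_k := u_k$ inside and $w_k:=\widetilde{u}_{\rho,1/\eta_k,\nu}$ outside. Then $w_k\in H^1(D;\R^d)$, $w_k = \widetilde{u}_{\rho,1/\eta_k,\nu}$ on $D_k^{(j_k)}$ for a suitable $j_k\to\infty$ determined by the inner edge of the collar, and
\[
\f_{\eta_k}(w_k,D) \le \f_{\eta_k}(u_k,D_k) + \f_{\eta_k}(\widetilde{u}_{\rho,1/\eta_k,\nu}, D\setminus D_k) + (\text{interpolation error on }S_k^{i_k}).
\]
The middle term is handled by a direct computation: $\widetilde{u}_{\rho,1/\eta_k,\nu} = \rho_{1/\eta_k}*u_{0,\nu}$ is a one-dimensional-type profile whose transition layer has width $O(\eta_k)$ and whose energy $\f_{\eta_k}$ is $O(1)$ on all of $D$ (using the upper bound in (H4)), but restricted to $D\setminus D_k$ — whose measure tends to $0$ by (i) — together with the fact that $\nabla \widetilde{u}_{\rho,1/\eta_k,\nu}$ is supported in a slab of width $O(\eta_k)$ around the hyperplane $\{y\cdot\nu=0\}$, it tends to $0$ (one must check the hyperplane slab is asymptotically inside $D_k$, which follows since $0\in D$, $0\in D_k$, and $\chi_{D_k}\to\chi_D$). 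The interpolation error is $\le C\eta_k (M_k/\delta)^2 \|u_k-\widetilde u_{\rho,1/\eta_k,\nu}\|_{L^2(S_k^{i_k})}^2 + C(\text{energy of }u_k \text{ and }\widetilde u \text{ on }S_k^{i_k})$, all $\to 0$ by the collar selection and the $L^q$ (hence $L^2$, since $q\ge 2$ and on a bounded set) convergence.

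The main obstacle I anticipate is the bookkeeping of the two competing scales: the gradient amplification $M_k/\delta$ from the cutoff must be beaten by the smallness $\eta_k$ of the singular-perturbation weight times the $L^2$-smallness of $u_k - \widetilde u_{\rho,1/\eta_k,\nu}$ on the thin collar — which is only $o(1)$, not quantitatively small — so $M_k$ cannot grow too fast, yet it must grow to make the averaged collar energy $O(1)/M_k$ vanish. Reconciling these forces a diagonal choice $M_k\to\infty$ with $M_k = o(\eta_k^{-1/2}\|u_k-\widetilde u\|_{L^2(\text{collar})}^{-1})$ and simultaneously $M_k\to\infty$; existence of such a sequence is where one invokes that all the quantities involved are $o(1)$ and extracts a diagonal subsequence. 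Finally, the $L^q$-convergence $w_k\to u_{0,\nu}$ follows since $w_k$ agrees with $u_k$ on the bulk (and $u_k\to u_{0,\nu}$ in $L^q$ after the equi-integrability upgrade from the (H4) bound on $\sup_k \f_{\eta_k}(u_k,D_k)$, which controls $\int_{D_k} W(x/\eta_k,u_k) \ge \int_{D_k}\widetilde W(u_k)$ via (H3) and hence forces $u_k$ near $\{a,b\}$ a.e. in the limit, giving an $L^\infty$-type bound), agrees with $\widetilde u_{\rho,1/\eta_k,\nu}\to u_{0,\nu}$ outside, and the collar where it does neither has measure $\to 0$.
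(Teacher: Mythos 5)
Your proposal is correct and follows essentially the same route as the paper's own proof: De Giorgi slicing of a boundary collar into many thin layers, a pigeonhole selection of a low-energy layer, convex interpolation between $u_k$ and the mollified profile $\widetilde{u}_{\rho,1/\eta_k,\nu}$ via a cutoff whose gradient cost $\eta_k (M_k/\delta)^2\|u_k-\widetilde{u}_{\rho,1/\eta_k,\nu}\|_{L^2}^2$ is killed by a diagonal choice of the number of layers, together with the observation that the profile's energy on the layer and on $D\setminus D_k$ is negligible since its transition slab of width $O(\eta_k)$ meets these sets in measure $o(\eta_k)$. Two details to tighten, both minor: the upgrade of (ii) to $L^q$ (and hence $L^2$) convergence should be argued via (H4) and Fatou/Vitali as in the paper's Step 1 --- the parenthetical ``$L^\infty$-type bound'' is not available --- and the extra constraint on $M_k$ should be that the selected sub-collar (hence the set $D_k^{(j_k)}$ where $w_k$ equals the profile) lies inside $D_k$, i.e.\ $M_k = o(1/d_k)$ with $d_k$ the width of the frame $D\setminus D_k$, rather than the condition involving $\min\{j: D_k^{(j)}\neq\emptyset\}$, which is vacuous as written.
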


\begin{proof}
Assume, without loss of generality, that
\begin{equation}\label{eq:limliminf}
\liminf_{k\to\infty}\f_{\eta_k}(u_k,D_k)=\lim_{k\to\infty}\f_{\eta_k}(u_k,D_k)<+\infty
\end{equation}
and that, as $n\to\infty$, $u_n(x)\chi_{D_k}(x)\to u_{0,\nu}(x)$ for a.e. $x\in D$.\\

\emph{Step 1.} We claim that
\begin{equation}\label{eq:claimslicing}
\lim_{k\to\infty}\|u_k-u_{0,\nu}\|_{L^q(D_k;\R^d)}=0.
\end{equation}
Indeed, using (H4), we get
\begin{equation}\label{eq:ptwiseest}
|u_k(x)-u_{0,\nu}(x)|^q\leq C\left( W\left( \frac{x}{\eta_k}, u_k(x) \right) +1 \right),
\end{equation}
for $x\in D_k$. From \eqref{eq:limliminf} we have $\chi_{D_k}(x)W(\frac{x}{\eta_k},u_k(x))\to0$ as $k\to\infty$
for a.e. $x\in D$, and thus
\begin{align*}
C|D|&-\limsup_{k\to\infty}\|u_k-u_{0,\nu}\|^q_{L^q(D_k;\R^d)}\\
    &=\liminf_{k\to\infty}\int_{D_k} \left[\, CW\left( \frac{x}{\eta_k}, u_k(x) \right) + C - |u_k(x)-u_{0,\nu}(x)|^q \,\right] \dx \\
&\geq \int_{D} \liminf_{k\to\infty} \chi_{D_k}(x)\left[\, CW\left( \frac{x}{\eta_k}, u_k(x) \right) + C - |u_k(x)-u_{0,\nu}(x)|^q \,\right] \dx \\
&\geq C|D|,
\end{align*}
where we used Fatou's lemma and \eqref{eq:ptwiseest}.\\

\emph{Step 2.} Here we abbreviate $\widetilde{u}_{\rho,1/k,\nu}$ as $\widetilde{u}_{1/k,\nu}$.
Set $v_k:=\widetilde{u}_{1/\eta_k,\nu}$ and $\lambda_k:=\|u_k\chi_{D_k}-v_k\|_{L^2(D;\R^d)}$.
Using Step 1, since $q\geq2$ we get $\lim_{k\to\infty}\lambda_k=0$. 
For every $k,j\in\N$ divide $D_k^{(j)}$ into $M_{k,j}$ equidistant layers $L_{k,j}^i$ of width $\eta_k \lambda_k$, for $i=1,\dots,M_{k,j}$.
It holds
\begin{equation}\label{eq:rate}
M_{k,j}\eta_k\lambda_k= \frac{1}{j}\,.
\end{equation}
For every $k,j\in\N$ let $L^{i_0}_{k,j}$, with $i_0\in\{1,\dots,M_{k,j}\}$, be such that
\begin{equation}\label{eq:bestlayer}
\int_{L^{i_0}_{k,j}}  a_{k}(x)\dx \leq\frac{1}{M_{k,j}} \int_{D_k^{(j)}} a_{k}(x) \dx\,,
\end{equation}
where
\[
a_{k}(x):=\frac{1}{\eta_k}(1+|u_k-v_k|^q+|v_k|^q)+\frac{1}{\lambda^2_k\eta_k}|u_k(x)-v_k(x)|^2
    +\eta_k\left(|\nabla u_k(x)|^2 +|\nabla v_k(x)|^2\right) \,.
\]
Further, consider cut-off functions $\varphi_{k,j}\in C^\infty_c(D)$ with
\begin{equation}\label{eq:propphi1}
0\leq \varphi_{k,j}\leq 1\,,\quad\quad\quad \|\nabla \varphi_{k,j}\|\leq \frac{C}{\eta_k\lambda_k}\,,
\end{equation}
such that
\begin{equation}\label{eq:propphi2}
\varphi_{k,j}(x)=1\,,\quad\quad \text{ for } x\in \left(\, \bigcup_{i=1}^{i_0-1} L^i_{k,j} \,\right)
    \cup (D_k\setminus D_k^{(j)})\,,
\end{equation}
\begin{equation}\label{eq:propphi3}
\varphi_{k,j}(x)=0\,,\quad\quad \text{ for } x\in \left(\, \bigcup_{i=i_0+1}^{M_{k,j}} L^i_{k,j} \,\right)
    \cup (D\setminus D_k)\,.
\end{equation}
Set
\[
\widetilde{w}_{k,j}:= \varphi_{k,j} u_k + (1-\varphi_{k,j})v_k\,.
\]
It holds that $\lim_{j\to\infty}\lim_{k\to\infty}\|\widetilde{w}_{k,j}-u_{0,\nu}\|_{L^q(D;\R^d)}=0$.
Let $j_k\in\N$ be such that $ D_k^{(j_k)}\subset \bigcup_{i=i_0+1}^{M_{k,j}} L^i_{k_j}$.
Then $\widetilde{w}_{k,j}=v_k$ in $D_k^{(j_k)}$.
We claim that
\begin{align}\label{eq:limsuppart1}
\liminf_{k\to\infty} \f_{\eta_k}(u_k, D_k)\geq \limsup_{j\to\infty}\limsup_{k\to\infty}\f_{\eta_k}(\widetilde{w}_{k,j}, D)\,.
\end{align}
Indeed
\begin{align}\label{eq:ineqabc}
\f_{\eta_k}(\widetilde{w}_{k,j}, D_k) &=
\f_{\eta_k}\left(\, u_k, \left(\, \bigcup_{i=1}^{i_0-1} L^i_{k,j} \,\right)
    \cup (D_k\setminus D_k^{(j)}) \,\right)+\f_{\eta_k}\left(\, \widetilde{w}_{k,j}, L^{i_0}_{k,j} \,\right) \nonumber\\
&\hspace{1cm} +\f_{\eta_k}\left(\, v_k, \bigcup_{i=i_0+1}^{M_{k,j}} L^i_{k,j} \,\right) \nonumber\\
&=: A_{k,j}+B_{k,j}+C_{k,j}\,.
\end{align}
To estimate the first term in \eqref{eq:ineqabc} we notice that
\begin{align}\label{eq:akj}
&\liminf_{k\to\infty} \f_{\eta_k}(u_k, D_k)
\geq \limsup_{j\to\infty}\limsup_{k\to\infty} A_{k,j}\,.
\end{align}
Consider the term $B_{k,j}$. Using (H4) together with \eqref{eq:propphi1} we have that
\begin{align}\label{eq:bkjpart1}
B_{k,j} &\leq C\int_{L^{i_0}_{k,j}}  \left[\, \frac{1}{\eta_k}(1+|\widetilde{w}_{k,j}|^q) +  \eta_k\left( |\nabla\varphi_{k,j}|^2|u_k-v_k|^2+|\nabla u_k|^2 + |\nabla v_k|^2 \right)  \,\right] \dd x  \nonumber\\
&\leq C\int_{L^{i_0}_{k,j}} \left[\, \frac{1}{\eta_k}(1+|u_k-v_k|^q+|v_k|^q)
    + \frac{1}{\eta_k\lambda_k^2}|u_k-v_k|^2
    +\eta_k\left(|\nabla u_k|^2 + |\nabla v_k|^2 \right)   \,\right] \dd x \nonumber \\
&\leq \frac{C}{M_{k,j}}\int_{D^{(j)}_k} \Biggl[\, \frac{1+|u_k-v_k|^q}{\eta_k} + \frac{|u_k-v_k|^2}{\eta_k\lambda_k^2} +\eta_k\left(|\nabla u_k|^2 + |\nabla v_k|^2 \right)   \,\Biggr] \dd x\,,
\end{align}
where in the last step we used \eqref{eq:bestlayer} and the fact that $\sup_{k\in\N}\|v_k\|_{L^\infty(D;\R^d)}<\infty$.
Since for a cube $rQ$ with side length $r$ we have
\[
|(rQ)^{(j)}| \leq \frac{2N r^{N-1}}{j},
\]
and the cubes $D_k$ are all contained in the bounded cube $D$, we can find $\bar{j}\in\N$ such that for all $j\geq\bar{j}$ and $k\in\N$ we get
\begin{align}\label{eq:bkjpart1a}
\frac{|D^{(j)}_k|}{M_{k,j}\eta_k} \leq \frac{C}{jM_{k,j}\eta_k} = C\lambda_k\,.
\end{align}
Step 1 (see \eqref{eq:claimslicing}) yields
\begin{align}\label{eq:bkjpart1b}
\frac{C}{M_{k,j}\eta_k}\int_{D^{(j)}_k} \left[\, 1+|u_k-v_k|^q \,\right] \dx&
\leq  Cj\lambda_k \left[\, \|u_k-v_k\|^q_{L^q(D_k;\R^d)} + 1 \,\right] \leq  Cj\lambda_k\,.
\end{align}
Moreover, by \eqref{eq:rate} we obtain
\begin{equation}\label{eq:bkjpart2}
\frac{1}{M_{k,j}\eta_k\lambda_k^2}\int_{D^{(j)}_k} |u_k-v_k|^2 \dx \leq C j\lambda_k\,,
\end{equation}
\begin{equation}\label{eq:bkjpart3}
\eta_k \int_{D_k}  |\nabla u_k|^2 \dy\leq\limsup_{k\to\infty} \f_{\eta_k}(u_k,D_k)<\infty\,,
\end{equation}
and, since
\begin{equation}\label{eq:estgradvk}
\|\nabla v_k\|_{L^\infty}\leq \frac{C}{\eta_k},
\end{equation}
\begin{equation}\label{eq:bkjpart4}
\frac{\eta_k}{M_{k,j}}\int_{D_k^{(j)}} |\nabla v_k|^2 \dy \leq \frac{C}{M_{k,j}\eta_k}= Cj\lambda_k\,.
\end{equation}
From \eqref{eq:bkjpart1}, \eqref{eq:bkjpart1a}, \eqref{eq:bkjpart1b}, \eqref{eq:bkjpart2}, \eqref{eq:bkjpart3} and \eqref{eq:bkjpart4} we get
\begin{equation}\label{eq:bkj}
\lim_{j\to\infty}\lim_{k\to\infty} B_{k,j}=0\,.
\end{equation}
We now estimate the term $C_{k,j}$. Using \eqref{eq:estgradvk}, we obtain
\begin{align*}
C_{k,j}&\leq \frac{1}{\eta_k}\int_{\bigcup_{i=i_0+1}^{M_{k,j}} L^i_{k,j}} \left[ \, W(v_k(y))
    + \eta_k^2 |\nabla v_k(y)|^2 \,\right] \dd y \\
&\leq \frac{C}{\eta_k}\left|\, D_k^{(j)} \cap \{ x\in D \,:\, |x\cdot\nu|<\eta_k \}  \,\right|\,,
\end{align*}
and so
\begin{align}\label{eq:ckj}
\lim_{j\to\infty}\lim_{k\to\infty} C_{k,j}=0\,.
\end{align}
Similarly, it holds that
\begin{equation}\label{eq:qnuqk}
\lim_{k\to\infty}\f_{\eta_k}(\widetilde{w}_{k,j}, D\setminus D_k)\leq
    \lim_{k\to\infty}\frac{C}{\eta_k}|(D\setminus D_k)\cap\{ x\in D \,:\, |x\cdot\nu|<\eta_k \}|=0\,.
\end{equation}
Using \eqref{eq:ineqabc}, \eqref{eq:akj}, \eqref{eq:bkj}, \eqref{eq:ckj} and \eqref{eq:qnuqk} we obtain \eqref{eq:limsuppart1}.\\

Applying a diagonalizing argument, it is possible to find an increasing sequence $\{j(k)\}_{k\in\N}$ such that
\[
\lim_{k\to\infty}[B_{k,j(k)}+C_{k,j(k)}+\f_{\eta_k}(\widetilde{w}_{k,j(k)}, D\setminus D_k)]=0\,,
\]
and $\lim_{k\to\infty}\|\widetilde{w}_{k,j(k)}-u_{0,\nu}\|_{L^1(D;\R^d)}=0$.
Thus, the sequence $\{w_k\}_{k\in\N}$, with $w_k:=\widetilde{w}_{k,j(k)}$ satisfies the claim of the lemma.
\end{proof}

\begin{remark}\label{rem:DG}
In the paper we will make use of the basic idea behind the proof of Lemma \ref{lem:DeGslicing} in several occasions.
In particular, it is possible to see that the result of Lemma \ref{lem:DeGslicing} still holds true if
the set $D\subset\R^N$ is a finite union of cubes, and $D_k=D$ for all $k\in\N$.
\end{remark}

The proof of the limsup inequality, Proposition \ref{prop:limsup}, uses periodicity properties of the potential energy $W$.
In particular, we will show that $W$ is periodic in the first variable not only with respect to the canonical set of orthogonal direction, but also with respect to a dense set of orthogonal directions.
In the sequel we will use the notation $\Lambda := \mathbb{Q}^N \cap \S^{N-1}$ and $\{e_1, \dots, e_N\}$ will denote the standard orthonormal basis for $\mathbb{R}^N$.
We first recall the following extension theorem for isometries (for a proof see, for instance, \cite[Theorem 10.2]{Lang}).

\begin{theorem}\label{thm:witt}\textnormal{(Witt's Extension Theorem)}
Let $V$ be a finite dimensional vector space over a field $\mathbb{K}$ with characteristic different from $2$, and let $B$ be a symmetric bilinear form on $V$ with $B(u,u)>0$ for all $u \not = 0$. Let $U, W$ be subspaces of $V$ and let $T: U \rga W$ be an isometry, that is, $B(u,v) = B(Tu, Tv)$ for all $u,v \in U$. Then $T$ can be extended to an isometry from $V$ to $V$.
\end{theorem}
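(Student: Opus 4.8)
The plan is to prove Witt's Extension Theorem by induction on $\dim U$, using orthogonal reflections as the basic building block, and exploiting that the positive definiteness of $B$ forces an extremely favourable (``anisotropic'') situation: every nonzero vector $v$ has $B(v,v)\neq 0$, and for every subspace $Z\subseteq V$ the restriction $B|_Z$ is again positive definite, hence nondegenerate, so that $V=Z\oplus Z^\perp$.

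First I would record the reflection lemma. Given a nonzero $v\in V$, define
\[
\tau_v(x):=x-\frac{2B(x,v)}{B(v,v)}\,v\,.
\]
Since the characteristic of $\mathbb{K}$ is different from $2$ and $B(v,v)\neq 0$, this is a well-defined linear map; a direct computation shows $B(\tau_v x,\tau_v y)=B(x,y)$, so $\tau_v$ is an isometry of $V$, with $\tau_v(v)=-v$ and $\tau_v|_{v^\perp}=\mathrm{id}$. The key consequence is: if $v_1,v_2\in V$ satisfy $B(v_1,v_1)=B(v_2,v_2)$, then there is an isometry $\sigma$ of $V$ with $\sigma(v_1)=v_2$. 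Indeed, if $v_1=v_2$ take $\sigma=\mathrm{id}$; otherwise $v_1-v_2\neq 0$, hence $B(v_1-v_2,v_1-v_2)\neq 0$, and using $B(v_1,v_1)=B(v_2,v_2)$ one gets $2B(v_1,v_1-v_2)=B(v_1-v_2,v_1-v_2)$, so $\tau_{v_1-v_2}(v_1)=v_1-(v_1-v_2)=v_2$.

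With this lemma, argue by induction on $k=\dim U$. For $k\leq 1$ the lemma is already the statement (note $B(Tu,Tu)=B(u,u)$ since $T$ is an isometry). For $k\geq 2$, pick $u_1\in U\setminus\{0\}$; as $B|_U$ is nondegenerate, $U=\langle u_1\rangle\oplus U'$ with $U':=U\cap u_1^\perp$ and $\dim U'=k-1$. Set $w_1:=T(u_1)$, so $B(w_1,w_1)=B(u_1,u_1)$, and by the lemma choose an isometry $\sigma$ of $V$ with $\sigma(u_1)=w_1$. Replacing $T$ by $T_1:=\sigma^{-1}\circ T$ (an isometry with $T_1(u_1)=u_1$), it suffices to extend $T_1$. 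Because $T_1$ is an isometry, $T_1(U')\subseteq u_1^\perp=:\widetilde V$, and also $U'\subseteq\widetilde V$; since $B|_{\widetilde V}$ is positive definite and $\dim U'<k$, the inductive hypothesis applied inside $\widetilde V$ yields an isometry $\widetilde\sigma$ of $\widetilde V$ extending $T_1|_{U'}$. Extend $\widetilde\sigma$ to $V=\widetilde V\oplus\langle u_1\rangle$ by $\widetilde\sigma(u_1):=u_1$; the cross terms in $B$ vanish because $\widetilde\sigma$ maps $\widetilde V$ into $\widetilde V=u_1^\perp$, so this is an isometry of $V$ extending $T_1$, and composing with $\sigma$ gives an isometry of $V$ extending the original $T$.

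The only genuinely delicate point is the choice of method: one should \emph{not} try to extend $T$ one coordinate at a time by picking a vector in $W^\perp$ with a prescribed value of $B(\cdot,\cdot)$, since over a general ordered field (e.g.\ $\mathbb{Q}$, which is the relevant case for Proposition~\ref{prop:periodW}) arbitrary positive values of $B$ need not be attained on a given subspace. The reflection argument circumvents this entirely, because the equal-length condition $B(v_1,v_1)=B(v_2,v_2)$ needed to apply the lemma is automatically supplied by $T$ being an isometry; everything else is routine linear algebra with the decompositions $V=Z\oplus Z^\perp$.
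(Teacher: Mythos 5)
Your proof is correct. Note that the paper does not prove this theorem at all: it is quoted as a known result with a reference to Lang (Theorem 10.2 there), so there is no in-paper argument to compare against; your reflection-plus-induction proof is essentially the standard textbook one, and it is a valid self-contained substitute for the citation. The one simplification worth flagging is exactly the one you exploit: positive definiteness makes the form anisotropic, so $v_1-v_2$ can never be isotropic and the reflection $\tau_{v_1-v_2}$ always works, whereas in the general nondegenerate Witt theorem one must argue that at least one of $v_1-v_2$, $v_1+v_2$ is anisotropic (or first split off a hyperbolic plane), which is the only genuinely delicate point your setting lets you skip. Your closing remark is also apt for the paper's application with $\mathbb{K}=\mathbb{Q}$: one cannot prescribe values of $B$ on a complement (not every positive rational is a sum of squares of the right shape), so the reflection argument, which only needs the equality $B(Tu,Tu)=B(u,u)$ already guaranteed by the isometry, is the right mechanism. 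Two cosmetic points: well-definedness of $\tau_v$ needs only $B(v,v)\neq 0$ (the characteristic assumption is not what makes the formula meaningful, and is in fact automatic once an order exists), and in the induction you should state the hypothesis as ranging over all positive definite ambient spaces, since the inductive step passes from $V$ to $u_1^\perp$ — both are exactly as you implicitly use them.
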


\begin{lemma}\label{lem:orthbasis}
Let $\nu \in \Lambda$. Then there exist a rotation $R_{\nu} : \mathbb{R}^N \rga \mathbb{R}^N$ and $\lambda_{\nu}\in\N$ such that $R_{\nu} e_N = \nu$ and $\lambda_{\nu} R_{\nu} e_i \in \mathbb{Z}^N$ for all $i = 1, \dots, N$. 
\end{lemma}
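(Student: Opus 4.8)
The plan is to invoke Witt's Extension Theorem (Theorem~\ref{thm:witt}) over the field $\Q$. Since $\nu\in\Lambda=\Q^N\cap\S^{N-1}$, its components are rational and $|\nu|^2=1$, so the crucial starting observation is that the assignment $e_N\mapsto\nu$ is \emph{already} an isometry between the rational lines spanned by these two vectors, both having norm $1$. Accordingly, I would take $V:=\Q^N$ as a vector space over $\Q$ (which has characteristic $0\neq 2$) with the standard dot product $B(u,v):=u\cdot v$, noting $B(u,u)>0$ for every $u\neq 0$; then set $U:=\Q e_N$, $W:=\Q\nu$, and define $T:U\rga W$ by $T(e_N):=\nu$. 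The computation $B(e_N,e_N)=1=|\nu|^2=B(\nu,\nu)$ shows that $T$ is an isometry of $(U,B|_U)$ onto $(W,B|_W)$.

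Next I would apply Theorem~\ref{thm:witt} to extend $T$ to an isometry $\widetilde T$ of $(\Q^N,B)$. Writing $\widetilde T$ in the standard basis, the identity $\widetilde Tu\cdot\widetilde Tv=u\cdot v$ for all $u,v$ translates into $M^{\mathsf T}M=\id$, so $\widetilde T$ is represented by a matrix $M\in O(N)\cap\Q^{N\times N}$ with $Me_N=\nu$; this is the step where rationality is preserved for free. It then only remains to fix the determinant. Since $\det M\in\{+1,-1\}$, if $\det M=1$ I set $R_\nu:=M$; if $\det M=-1$, I replace $M$ by $MD$ with $D:=\mathrm{diag}(-1,1,\dots,1)$, so that (using $N\geq 2$, whence $De_N=e_N$) one still has $MDe_N=\nu$ while $\det(MD)=(-1)(-1)=1$. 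Either way this produces a rotation $R_\nu$ of $\R^N$, given by a rational matrix, with $R_\nu e_N=\nu$.

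Finally, since $R_\nu$ has only finitely many rational entries, I would let $\lambda_\nu\in\N$ be a common denominator of them (for instance the least common multiple of the denominators of the entries), so that $\lambda_\nu R_\nu$ has integer entries and therefore $\lambda_\nu R_\nu e_i\in\Z^N$ for every $i=1,\dots,N$, completing the argument. I do not foresee a genuine obstacle: the only two points that require a moment's thought are recognizing that Witt's theorem is exactly the right tool — it upgrades the trivial norm-preserving map $e_N\mapsto\nu$ to a full rational orthogonal transformation, so that the basis vectors $R_\nu e_i$ are automatically rational — and correcting the sign of the determinant without disturbing the image of $e_N$, which is possible precisely because $N\geq 2$.
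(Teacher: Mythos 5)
Your proposal is correct and follows essentially the same route as the paper: apply Witt's Extension Theorem over $\Q$ to the norm-preserving map $e_N\mapsto\nu$, correct the determinant by flipping the sign of the image of $e_1$ (leaving $e_N$ untouched), and take $\lambda_\nu$ to be a common denominator of the rational entries. The only difference is presentational (matrix language and the explicit diagonal factor $D$ versus the paper's "redefine the sign of $T(e_1)$"), so there is nothing substantive to add.
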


\begin{proof}
Let $\nu \in \Lambda$ be fixed. Consider the spaces
\[
U:= \textrm{Span}(e_N)\,,\quad\quad W := \textrm{Span}(\nu)
\]
as subspaces of $V := \mathbb{Q}^N$ over the field $\mathbb{K} := \mathbb{Q}$, with $B$ being the standard Euclidean inner product. Then, the linear map $T : U \rga W$ defined by $T(e_N):=\nu$ is an isometry.
Apply Theorem \ref{thm:witt} to extend $T$ as a linear isometry $T : \mathbb{Q}^N \rga \mathbb{Q}^N$.
In particular, $T(e_i)\cdot T(e_j)=\delta_{ij}$. Up to redefining the sign of $T(e_1)$ so that $\det T > 0$, we can assume $T$ to be a rotation.
Let $\lambda_{\nu} \in \mathbb{N}$ be such that $\lambda_{\nu} T(e_i) \in \mathbb{Z}^N$ for all $i = 1, \dots, N$.
Finally, define $R_{\nu} :\mathbb{R}^N \rga \mathbb{R}^N$ to be the unique continuous extension of $T$ to all of $\mathbb{R}^N$, which is well defined as isometries are uniformly continuous.
\end{proof}

\begin{proposition}\label{prop:periodW}
Let $\nu_N\in\Lambda$. Then there exist $\nu_1,\dots,\nu_{N-1}\in\Lambda$ and $T\in\N$ such that
$\nu_1,\dots,\nu_{N-1},\nu_N$ is an orthonormal basis of $\R^N$, and for a.e. $x\in Q$ it holds
$W(x+T\nu_i,p)=W(x,p)$ for all $p\in\R^d$ and $i = 1, \dots, N$.
\end{proposition}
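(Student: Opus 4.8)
The plan is to deduce Proposition \ref{prop:periodW} from Lemma \ref{lem:orthbasis} together with the $Q$-periodicity hypothesis (H0). Since $\nu_N\in\Lambda$, Lemma \ref{lem:orthbasis} provides a rotation $R_\nu:\R^N\to\R^N$ and an integer $\lambda_\nu\in\N$ with $R_\nu e_N=\nu_N$ and $\lambda_\nu R_\nu e_i\in\Z^N$ for all $i=1,\dots,N$. I would then set $\nu_i:=R_\nu e_i$ for $i=1,\dots,N-1$; since $R_\nu$ is a rotation, $\{\nu_1,\dots,\nu_N\}$ is automatically an orthonormal basis of $\R^N$, and since $\lambda_\nu\nu_i\in\Z^N$ (hence $\nu_i\in\Q^N$) and $|\nu_i|=1$, each $\nu_i\in\Lambda$. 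This settles the basis requirement and the rationality; the only remaining point is the periodicity statement.

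For the periodicity, the natural choice is $T:=\lambda_\nu$, so that $T\nu_i=\lambda_\nu R_\nu e_i\in\Z^N$. Hypothesis (H0) says that $x\mapsto W(x,p)$ is $Q$-periodic, \emph{i.e.} $W(x+z,p)=W(x,p)$ for a.e.\ $x$, all $p\in\R^d$, and every $z\in\Z^N$. Applying this with $z=T\nu_i\in\Z^N$ gives $W(x+T\nu_i,p)=W(x,p)$ for a.e.\ $x$ and all $p$, for each $i=1,\dots,N$. One should be slightly careful about the quantifier ``for a.e.\ $x$'': the null set on which periodicity might fail depends a priori on $z$, but since we only use finitely many translations $z=T\nu_1,\dots,T\nu_N$, the union of the corresponding null sets is still null, so there is a single full-measure set of $x\in Q$ on which all $N$ identities hold simultaneously. (If one wants the statement for a.e.\ $x\in Q$ exactly as written, one may also invoke that $W$ is Carath\'eodory, (H1), so the periodicity in fact holds for a fixed full-measure set of $x$ independent of $p$.)

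I do not expect a serious obstacle here: the essential content has already been isolated in Lemma \ref{lem:orthbasis}, whose proof via Witt's Extension Theorem is where the real work lies. The only mild subtlety is the bookkeeping with the exceptional null sets described above, and confirming that the same integer $T=\lambda_\nu$ works uniformly for all the directions $\nu_1,\dots,\nu_N$ — which it does, precisely because Lemma \ref{lem:orthbasis} produces a single $\lambda_\nu$ clearing the denominators of all the $R_\nu e_i$ at once. Thus the proof is essentially: invoke Lemma \ref{lem:orthbasis}, relabel $\nu_i:=R_\nu e_i$ and $T:=\lambda_\nu$, verify $\nu_i\in\Lambda$ and orthonormality from the properties of $R_\nu$, and conclude the periodicity from (H0) applied to the integer vectors $T\nu_i$.
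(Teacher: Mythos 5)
Your proposal is correct and follows essentially the same route as the paper: invoke Lemma \ref{lem:orthbasis} to get the rotation and the integer $\lambda_{\nu_N}$, set $\nu_i:=R e_i$ and $T:=\lambda_{\nu_N}$, and deduce the periodicity from (H0) applied to the integer vectors $T\nu_i\in\Z^N$. Your extra remarks on $\nu_i\in\Lambda$ and on the finitely many exceptional null sets are harmless refinements of the same argument.
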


\begin{proof}
Let $R:\R^N\to\R^N$ be a rotation and let $T := \lambda_{\nu_N} \in\N$ be given by Lemma \ref{lem:orthbasis} relative to $\nu_N$. Set $\nu_i:=Re_i$ for $i=1,\dots,N-1$.
We have that $T\nu_i\in\Z^N$ for all $i=1,\dots,N$.
Fix $i\in\{1,\dots,N\}$ and write $T\nu_i=\sum_{j=1}^N \lambda_j e_j$, for some $\lambda_j\in\Z$.
For $p \in \mathbb{R}^d$, using the periodicity of $W(\cdot, p)$ with respect to the canonical directions, for a.e. $x\in Q$ we have that
\[
W(x+T\nu_i,p)=W\left(x+\sum_{j=1}^N \lambda_j e_j,p\right)=W(x,p).
\]
\end{proof}

In the following, given a linear map $L:\R^N\to\R^N$, we will denote by $\|L\|$ the Euclidean norm of $L$, \emph{i.e.}, $\|L\|^2:=\sum_{i,j=1}^N [L(e_i)\cdot e_j]^2$.
For the sake of notation, we will also define the set of rational rotations $SO(N; \mathbb{Q}) \subset SO(N)$ as the rotations $R \in SO(N)$ such that $Re_i \in \mathbb{Q}^N$ for $i \in \{1, \dots, N\}$.

\begin{lemma}\label{lem:denserotations}
Let $\varepsilon>0$, $\nu\in\Lambda$, and let $S:\R^N\to\R^N$ be a rotation with $S(e_N)=\nu$.
Then there exists a rotation $R\in SO(N; \mathbb{Q})$ such that
$R(e_N)=\nu$ and $\|R-S\|<\e$.
\end{lemma}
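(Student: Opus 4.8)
The goal is to approximate an arbitrary rotation $S$ fixing $\nu \in \Lambda$ by a rational rotation $R$ still fixing $\nu$, as closely as we wish. The plan is to first reduce to the case $\nu = e_N$ by conjugating with the rational rotation $R_\nu$ supplied by Lemma \ref{lem:orthbasis}, and then to prove the statement in the stabilizer of $e_N$, which is naturally identified with $SO(N-1)$ acting on $\mathrm{Span}(e_1,\dots,e_{N-1})$. So the core assertion becomes: $SO(N-1;\mathbb{Q})$ is dense in $SO(N-1)$.

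For the density of rational rotations in $SO(m)$ (with $m = N-1$), I would argue as follows. Every rotation in $SO(m)$ is a product of plane rotations (Givens rotations) in coordinate planes $e_i\text{-}e_j$, each determined by a single angle $\theta$; this is the standard $QR$-type factorization of an orthogonal matrix. A plane rotation by angle $\theta$ can be approximated by a plane rotation by an angle $\theta'$ with $(\cos\theta',\sin\theta') \in \mathbb{Q}^2$ on the unit circle, because Pythagorean pairs $\big(\tfrac{1-t^2}{1+t^2},\tfrac{2t}{1+t^2}\big)$ for $t\in\mathbb{Q}$ are dense in $S^1$ (continuity of $t\mapsto$ this parametrization, or density of $\mathbb{Q}$ in $[0,2\pi)$ composed with a rational approximation of the point on the circle). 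Each such rational plane rotation lies in $SO(m;\mathbb{Q})$, and since matrix multiplication is continuous and $SO(m;\mathbb{Q})$ is closed under products, the product of the rational approximants lies in $SO(m;\mathbb{Q})$ and is close to $S$ restricted to the hyperplane. Pulling back through the conjugation by $R_\nu$ — whose entries are rational, hence $R \mapsto R_\nu R R_\nu^{-1}$ preserves rationality and is an isometry of the operator/Euclidean norm up to a factor controlled by $\|R_\nu\|$ — gives the desired $R \in SO(N;\mathbb{Q})$ with $R(e_N) = \nu$ and $\|R - S\| < \varepsilon$.

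Concretely, the steps in order: (1) invoke Lemma \ref{lem:orthbasis} to get $R_\nu \in SO(N;\mathbb{Q})$ with $R_\nu e_N = \nu$, and set $S_0 := R_\nu^{-1} S R_\nu$, which fixes $e_N$, hence restricts to an element of $SO(N-1)$ on $e_N^\perp$; (2) write $S_0|_{e_N^\perp}$ as a finite product of Givens rotations; (3) replace each Givens angle by a nearby Pythagorean (rational) angle, using continuity of the product map to control the total error by any prescribed $\delta > 0$; (4) this produces $R_0 \in SO(N;\mathbb{Q})$ fixing $e_N$ with $\|R_0 - S_0\|$ small; (5) set $R := R_\nu R_0 R_\nu^{-1} \in SO(N;\mathbb{Q})$, note $R e_N = R_\nu R_0 e_N = R_\nu e_N = \nu$, and estimate $\|R - S\| = \|R_\nu(R_0 - S_0)R_\nu^{-1}\| \le \|R_\nu\|\,\|R_\nu^{-1}\|\,\|R_0 - S_0\|$, choosing $\delta$ so that this is $< \varepsilon$.

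The main obstacle — really the only non-bookkeeping point — is establishing density of $SO(m;\mathbb{Q})$ in $SO(m)$; once one fixes on the Givens-factorization plus Pythagorean-pairs approach it is routine, but one must be a little careful that the Givens factorization can genuinely be taken with all rotations in coordinate planes and that the number of factors is bounded (it is: $\binom{m}{2}$ suffices), so that the error propagation through the product is uniformly controlled. Everything else — the reduction via the rational conjugation $R_\nu$, the verification that $R$ still fixes $\nu$, and the norm estimate — is elementary. One small care point: since the statement allows $N = 1$ trivially and $N = 2$ where $SO(1)$ is trivial, the argument degenerates harmlessly in low dimensions, and the case $m = N-1 \ge 1$ is covered uniformly by the Givens description (an empty product when $m \le 1$).
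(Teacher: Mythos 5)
Your core idea---reduce to the stabilizer of $e_N$ by means of the rational rotation supplied by Lemma \ref{lem:orthbasis}, and then prove that rational rotations are dense in the stabilizer, identified with $SO(N-1)$---is the right one, and your proof of the density step (Givens factorization into at most $\binom{N-1}{2}$ coordinate-plane rotations, each approximated by a plane rotation with rational cosine and sine via the Pythagorean parametrization, with the error controlled through the finite product) is correct and genuinely different from the paper's. The paper instead proves density of $SO(N;\Q)$ in $SO(N)$ by induction on $N$, using Lemma \ref{lem:orthbasis} together with compactness of $SO(N)$ to handle the image of $e_N$ and the inductive hypothesis for the stabilizer; your route is more constructive and elementary, while the paper's reuses the Witt-extension machinery already in place and avoids any factorization.

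However, your reduction step as written fails. You set $S_0:=R_\nu^{-1}SR_\nu$ and claim it fixes $e_N$; but $S_0(e_N)=R_\nu^{-1}(S(\nu))$, and the hypothesis is $S(e_N)=\nu$, not $S(\nu)=\nu$, so $S_0$ need not fix $e_N$. Correspondingly, $R:=R_\nu R_0R_\nu^{-1}$ satisfies $R(\nu)=\nu$ rather than $R(e_N)=\nu$; your computation ``$Re_N=R_\nu R_0e_N$'' silently drops the factor $R_\nu^{-1}$, and the same slip appears in your opening sentence, where you describe $S$ as ``fixing $\nu$''. The repair is to compose on one side only, as the paper does: set $S_0:=R_\nu^{-1}\circ S$, which does fix $e_N$ since $R_\nu^{-1}(\nu)=e_N$; approximate its restriction to $e_N^\perp$ by your Givens argument, yielding $R_0\in SO(N;\Q)$ fixing $e_N$ with $\|R_0-S_0\|<\e$; and put $R:=R_\nu\circ R_0$, so that $R(e_N)=R_\nu(e_N)=\nu$ and $\|R-S\|=\|R_\nu(R_0-S_0)\|=\|R_0-S_0\|<\e$, using the invariance of the Euclidean norm under composition with an orthogonal map---no constant $\|R_\nu\|\,\|R_\nu^{-1}\|$ is needed. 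With this correction the remainder of your argument goes through.
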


\begin{proof}

\emph{Step 1} We claim that $SO(N; \mathbb{Q})$ is dense in $SO(N)$ for every $N \geq 1$.

We proceed by induction on $N$. When $N = 1$, $SO(N)$ consists of the identity, so the claim is trivial. Let $N>1$ be fixed and let $\e > 0$ and $S \in SO(N)$ be arbitrary. By density of $\mathbb{Q}^N \cap \S^{N-1}$, we can find a sequence $\{q_n\}_{n\in\N}\in\Lambda$ with $|q_n| = 1$ such that $q_n \to S( e_N)$ as $n \to \infty$. By Lemma \ref{lem:orthbasis} we can find $R_n \in SO(N; \mathbb{Q})$ such that $R_n(e_N) = q_n$. Since $SO(N)$ is a compact set, we can extract a convergent subsequence (not relabeled) of $\{R_n\}$ such that $R_n \to R \in SO(N)$, with $R(e_N) = \lim_{n \to \infty} R_n (e_N) = S( e_N)$. 

Thus, the rotation $R^{-1} \circ S$ fixes $e_N$ and may be identified with a rotation $T \in SO(N-1)$, i.e., writing $e_i =: (e_i', 0), i = 1, \dots, N-1$, it follows that $R e_i = (T e_i',0), i = 1, \dots, N-1$. By the induction hypotheses, we can find $T' \in SO(N-1 ; \mathbb{Q})$ such that 
\[ 
\|T- T'\| < \frac{\e}{2}.
\]
Define $R' \in SO(N; \Q)$ by
\[
R' e_i :=
\begin{cases}
(T' e_i', 0) & i = 1, \dots, N-1, \\
e_N & i = N.
\end{cases}
\]
Let $n_0$ be so large that 
\[
\|R - R_{n_0} \| < \frac{\e}{2}.
\]
We claim that our desired rotation is $ R_{n_0} \circ R' \in SO(N; \mathbb{Q})$. Indeed,
\begin{align*}
\| R_{n_0} \circ R' - S \| &\leq \| R_{n_0} \circ R' -   R_{n_0} \circ R^{-1} \circ S \| + \|   R_{n_0} \circ R^{-1} \circ S  - S \| \\
& = \|  R' -  R^{-1} \circ S \| + \|R_{n_0}  -  R \|\\
& = \|  T' -  T \| + \|R_{n_0}  -  R \| < \e.
\end{align*}

\emph{Step 2} Let $S \in SO(N)$ with $S(e_N) = \nu$ be given. If $N = 1$, there is nothing else to prove, so we proceed with $N>1$. 

By Lemma \ref{lem:orthbasis} we can find a rotation $R_1 \in SO(N; \mathbb{Q})$ such that $R_1( e_N) = \nu$. Since $R_1^{-1} \circ S$ is a rotation with $(R_1^{-1} \circ S) (e_N) = e_N$, as in Step 1 we can identify $R^{-1} \circ S$ with a rotation $T_1 \in SO(N-1)$. Also by Step 1, $SO(N-1; \mathbb{Q})$ is dense in $SO(N-1)$, so we can find $T_2 \in SO(N-1; \mathbb{Q})$ such that $\|T_2 - T_1\| < \e$. As before, identifying $T_2$ with a rotation $ R_2 \in SO(N; \mathbb{Q})$ that fixes $e_N$, we set $R := R_1 \circ R_2 \in SO(N; \mathbb{Q})$. We have that $(R_1 \circ R_2) (e_N) = R_1 (e_N) = \nu$ and
\[
\|R_1 \circ R_2 - S\| = \| R_2 - R_1^{-1} \circ S  \| =  \|T_2 - T_1 \| < \e.
\]
\end{proof}

\begin{definition}\label{def:poly}
Let $V\subset\S^{N-1}$. We say that a set $E\subset\R^N$ is a \emph{$V$-polyhedral set} if $\partial E$ is a Lipschitz manifold contained in the union of finitely many affine hyperplanes each of which is orthogonal to an element of $V$.
\end{definition}

A variant of well known approximation results of sets of finite perimeter by polyhedral sets yields the following (see \cite[Theorem 3.42]{AFP}).

\begin{lemma}\label{lem:densitysets}
Let $V\subset\S^{N-1}$ be a dense set. If $E$ is a set with finite perimeter in $\o$, then there exists a sequence $\{E_n\}_{n\in\N}$ of $V$-polyhedral sets such that
\[
\lim_{n\to\infty}\|\chi_{E_n}-\chi_E\|_{L^1(\o)}=0\,,\quad\quad\quad
\lim_{n\to\infty}|P(E_n;\o)-P(E;\o)|=0\,.
\]
\end{lemma}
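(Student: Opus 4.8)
The plan is to derive the lemma from the classical approximation of sets of finite perimeter by polyhedral sets --- the case $V=\S^{N-1}$, recalled just above (cf.\ \cite[Theorem 3.42]{AFP}) --- by perturbing the orienting hyperplanes of a polyhedral set until their normals fall into $V$, while keeping control of the $L^1$ distance and of the perimeter. Concretely, by that classical result and a diagonal argument it suffices to treat the case in which $E$ is a bounded polyhedral set; after an arbitrarily small further perturbation of its orienting hyperplanes (again harmless, by a diagonal argument) I would in addition assume that $\partial E$ is a Lipschitz (piecewise linear) manifold, that each vertex of $\partial E$ is a transversal intersection of exactly $N$ of these hyperplanes --- so that the combinatorial structure of $E$ is stable under small movements of the hyperplanes --- and that $\hno(H\cap\partial\o)=0$ for each of them. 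Writing the orienting hyperplanes as $H_j=\{x:\mu_j\cdot x=t_j\}$, $\mu_j\in\S^{N-1}$, $j=1,\dots,m$, and $H_j^-:=\{x:\mu_j\cdot x\le t_j\}$, there is a Boolean function $\Psi\colon\{0,1\}^m\to\{0,1\}$ --- recording which cells of the arrangement $\{H_j\}_j$ lie in $E$, and set equal to $0$ off the realized cell-signatures --- such that $\chi_E=\Psi(\chi_{H_1^-},\dots,\chi_{H_m^-})$ a.e.\ in $\R^N$.

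Next, fix $\eta>0$. Since $V$ is dense in $\S^{N-1}$, I would choose $\nu_j\in V$ with $|\nu_j-\mu_j|<\eta$, let $R_j$ be a rotation $\eta$-close to the identity with $R_j\mu_j=\nu_j$, and set $\widetilde H_j:=\{x:\nu_j\cdot x=s_j\}$, $\widetilde H_j^-:=\{x:\nu_j\cdot x\le s_j\}$, with $s_j$ taken within $O(\eta)$ of $t_j$ so that the hyperplanes $\widetilde H_j$ are pairwise distinct and $\hno(\widetilde H_j\cap\partial\o)=0$. Define $E_\eta$ by $\chi_{E_\eta}:=\Psi(\chi_{\widetilde H_1^-},\dots,\chi_{\widetilde H_m^-})$. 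By the combinatorial stability arranged above, $\partial E_\eta$ is a Lipschitz manifold contained in $\widetilde H_1\cup\dots\cup\widetilde H_m$, each $\widetilde H_j$ being orthogonal to $\nu_j\in V$; hence $E_\eta$ is $V$-polyhedral, it is bounded, and its vertices lie within $O(\eta)$ of those of $E$, so $|E_\eta\triangle E|=O(\eta)\to0$. Moreover $\chi_{\widetilde H_j^-}(x)\to\chi_{H_j^-}(x)$ as $\eta\to0$ for a.e.\ $x$ (the defining inequalities being strict off the negligible set $\bigcup_jH_j$), whence $\chi_{E_\eta}\to\chi_E$ in $L^1(\o)$ by dominated convergence.

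It then remains to prove convergence of the perimeters. By lower semicontinuity of the perimeter under $L^1$-convergence, $\liminf_{\eta\to0}P(E_\eta;\o)\ge P(E;\o)$, so only $\limsup_{\eta\to0}P(E_\eta;\o)\le P(E;\o)$ is at issue. By Theorem \ref{thm:DeGiorgi}(ii), and since distinct hyperplanes meet in an $\hno$-null set,
\[
P(E;\o)=\sum_{j=1}^m\hno(\Sigma_j\cap\o),\qquad P(E_\eta;\o)=\sum_{j=1}^m\hno(\Sigma_j^\eta\cap\o),
\]
where, up to $\hno$-null sets, $\Sigma_j:=\partial^*E\cap H_j$ is the set of $x\in H_j$ that lie on no other $H_i$ and at which $\Psi$ changes value when its $j$-th argument is flipped (the others frozen at $\chi_{H_i^-}(x)$), and $\Sigma_j^\eta\subseteq\widetilde H_j$ is defined in the same way from $\{\widetilde H_i\}_i$. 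Let $\Pi_j^\eta:=R_j|_{H_j}\colon H_j\to\widetilde H_j$, an isometry with $\|\Pi_j^\eta-\mathrm{id}\|=O(\eta)$. Outside the thin slab $N_j^\eta:=\{x\in H_j:|\mu_i\cdot x-t_i|\le c\,\eta\ \text{for some}\ i\ne j\}$ --- an $O(\eta)$-neighbourhood in $H_j$ of the $(N-2)$-dimensional set $\bigcup_{i\ne j}(H_i\cap H_j)$, so that $\hno(N_j^\eta\cap\o)=o(1)$, for a suitable $c$ depending only on the fixed data --- one has $\chi_{\widetilde H_i^-}(\Pi_j^\eta(x))=\chi_{H_i^-}(x)$ for every $i\ne j$, hence $x\in\Sigma_j\iff\Pi_j^\eta(x)\in\Sigma_j^\eta$ (up to $\hno$-null sets). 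Therefore $\Sigma_j^\eta\cap\o\subseteq\Pi_j^\eta\big((\Sigma_j\cup N_j^\eta)\cap(\Pi_j^\eta)^{-1}(\o)\big)$ up to $\hno$-null sets, and since $\Pi_j^\eta$ is an isometry,
\[
\hno(\Sigma_j^\eta\cap\o)\le\hno\big(\Sigma_j\cap(\Pi_j^\eta)^{-1}(\o)\big)+\hno(N_j^\eta\cap\o).
\]
The last term is $o(1)$ as $\eta\to0$; the first tends to $\hno(\Sigma_j\cap\o)$ because $\Pi_j^\eta\to\mathrm{id}$, $\Sigma_j$ has finite $\hno$-measure, and $\hno(H_j\cap\partial\o)=0$. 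Summing over the finitely many $j$ gives $\limsup_{\eta\to0}P(E_\eta;\o)\le P(E;\o)$; this proves the lemma for polyhedral $E$, and hence, by the reduction, in general.

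The delicate point is the perimeter upper bound: a careless perturbation of the half-spaces $H_j^-$ need not preserve the perimeter, because a nearly degenerate cell of the arrangement can contribute order-one area to $\partial^*E_\eta$ that is absent from $\partial^*E$. This is precisely why the preliminary simplification --- simplicity of the vertices of $\partial E$, which gives combinatorial stability --- and the excision of the thin slabs $N_j^\eta$ near the lower-dimensional strata of the arrangement are required: together they ensure that each flat facet of $\partial^*E$ is deformed into a facet of $\partial^*E_\eta$ of asymptotically the same $\hno$-measure.
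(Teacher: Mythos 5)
Your proposal is correct and follows essentially the same route as the paper: approximate $E$ by polyhedral sets via \cite[Theorem 3.42]{AFP}, then tilt the finitely many orienting hyperplanes so that their normals fall into the dense set $V$, controlling the $L^1$ distance and the perimeter. The paper asserts this perturbation step without detail, whereas you supply the justification (general position of the vertices, excision of thin slabs near the lower-dimensional strata, and the condition $\hno(H\cap\partial\o)=0$) that makes the perimeter estimate rigorous.
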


\begin{proof}
Using \cite[Theorem 3.42]{AFP} it is possible to find a family $\{F_n\}_{n\in\N}\subset\R^N$ of polyhedral sets such that 
\[
\|\chi_{F_n}-\chi_E\|_{L^1(\o)}\leq\frac{1}{n}\,,\quad\quad\quad
|P(F_n;\o)-P(E;\o)|\leq\frac{1}{n}\,.
\]
For every $n\in\N$, let $\Gamma_{1}^{(n)},\dots,\Gamma_{s_n}^{(n)}$ be the hyperplanes whose union contains the boundary of $F_n$.
Let $\nu^{(n)}_1,\dots,\nu^{(n)}_{s_n}\in\S^{N-1}$ be such that $\Gamma_i=(\nu_i^{(n)})^\perp$.
Then it is possible to find rotations $R^{(n)}_i:\R^N\to\R^N$ such that $R^{(n)}_i \nu^{(n)}_i\in V$ and, denoting by $E_n\subset\R^N$ the set \emph{enclosed} by the hyperplanes $(R^{(n)}_i \nu^{(n)}_i)^\perp$, we get
\[
\|\chi_{E_n}-\chi_E\|_{L^1(\o)}\leq\frac{2}{n}\,,\quad\quad\quad
|P(E_n;\o)-P(E;\o)|\leq\frac{2}{n}\,.
\]
\end{proof}


\section{Properties of the function $\sigma$}

The aim of this section is to study properties of the function $\sigma$ introduced in Definition \ref{def:sigma} that we will need in the proof of Proposition \ref{prop:limsup} in order to prove the limsup inequality.

\begin{lemma}\label{lem:estimatesigma}
Let $\nu\in\S^{N-1}$. Then $\sigma(\nu)$ is well defined and is finite.
\end{lemma}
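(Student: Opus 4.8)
The plan is to show that the quantity $g(\nu,T)$ is bounded above uniformly in $T$ by producing an explicit competitor, and bounded below by $0$ trivially, and then to establish that the limit as $T\to\infty$ actually exists (not merely the liminf and limsup). For the upper bound, fix any cube $Q_\nu\in\mathcal{Q}_\nu$ and use as competitor the function $u:=\widetilde{u}_{\rho,1,\nu}$ itself, extended to all of $TQ_\nu$; this function is admissible since it already equals $\widetilde{u}_{\rho,1,\nu}$ on $\partial(TQ_\nu)$, it lies in $H^1$, and it is constant (equal to $a$ or $b$) outside the slab $\{|y\cdot\nu|\le C_\rho\}$ of width controlled by the support of $\rho$. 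On that slab the gradient is bounded by $\|\nabla\widetilde{u}_{\rho,1,\nu}\|_{L^\infty}\le C$ and $W(y,u(y))\le C(1+|u|^q)\le C'$ by (H4) together with the boundedness of $\widetilde{u}_{\rho,1,\nu}$; since the slab intersected with $TQ_\nu$ has volume $O(T^{N-1})$, we obtain
\[
g(\nu,T)\le \frac{1}{T^{N-1}}\int_{TQ_\nu\cap\{|y\cdot\nu|\le C_\rho\}}\bigl[W(y,u(y))+|\nabla u|^2\bigr]\dd y\le C''
\]
uniformly in $T$. Thus $0\le \liminf_{T\to\infty}g(\nu,T)\le\limsup_{T\to\infty}g(\nu,T)\le C''<\infty$.

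The remaining point is that the limit exists. The cleanest route is a near-subadditivity argument: given $T>S>0$, tile (most of) $TQ_\nu$ by translated copies of $SQ_\nu$, in each of which place a near-optimal competitor for $g(\nu,S)$ after adjusting its boundary values to the common datum $\widetilde{u}_{\rho,1,\nu}$ via the slicing argument of Lemma \ref{lem:DeGslicing} (in the form noted in Remark \ref{rem:DG}, applied to a finite union of cubes). The tiling leaves an uncovered region near $\partial(TQ_\nu)$ and in the interfaces between tiles of total volume $O(T^{N-2}S+T^{N-1})$ in the slab, on which we again use the bounded competitor $\widetilde{u}_{\rho,1,\nu}$; the boundary-adjustment errors are $o(S^{N-1})$ per tile. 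Dividing by $T^{N-1}$ and letting $T\to\infty$ then $S\to\infty$ yields
\[
\limsup_{T\to\infty}g(\nu,T)\le \liminf_{S\to\infty}g(\nu,S),
\]
so the limit exists and $\sigma(\nu)$ is well defined and finite. Alternatively, one may simply invoke the subadditivity machinery of \cite{BFM} already cited in Remark \ref{rem:sigma}, which gives existence of the limit and independence of the cube simultaneously; I would mention this as the efficient option and carry out the direct estimate above for the upper bound.

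The main obstacle is the boundary-value matching between adjacent tiles and between the tiles and the outer region: the competitors on different copies of $SQ_\nu$ need not agree on shared faces, so one cannot naively glue them into an $H^1$ function on $TQ_\nu$. This is exactly the difficulty that Lemma \ref{lem:DeGslicing} (via De Giorgi's slicing) is designed to handle — by interpolating to the fixed datum $\widetilde{u}_{\rho,1,\nu}$ in a thin shell inside each tile at negligible energy cost — so the obstacle is technical rather than conceptual, and for the mere statement "$\sigma(\nu)$ well defined and finite" the upper-bound computation together with the citation to \cite{BFM} suffices.
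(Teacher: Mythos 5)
Your strategy is the same as the paper's: a uniform-in-$T$ upper bound for $g(\nu,T)$ via the competitor $\widetilde{u}_{\rho,1,\nu}$ (this is fine and is essentially how the paper gets finiteness), plus a near-subadditivity estimate obtained by tiling the large cube with copies of a near-optimal competitor on the small cube. However, there is a genuine gap in the tiling step: you assume that a translated copy of a near-optimal competitor for $g(\nu,S)$ has (up to boundary adjustments) the same energy as the original. This is false in general, because $W(\cdot,p)$ is only $Q$-periodic with respect to the canonical lattice $\Z^N$, while the cube $Q_\nu$ and the tiling directions inside $\nu^\perp$ are typically not rationally related to $\Z^N$; translating the competitor by a vector $\tau\notin\Z^N$ changes $\int W(y,u(y-\tau))\,\dd y$ in an uncontrolled way. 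This lattice-compatibility problem is the heart of the lemma (and the source of the anisotropy in the whole paper), and it is exactly what the paper's proof is built around: it tiles with padded prisms of side $T+\sqrt N+2$, snaps each tile's anchor to a nearby point $x_i\in\Z^N$ (at distance at most $\sqrt N$), uses that the boundary datum $\rho\ast u_{0,\nu}$ depends only on $x\cdot\nu$ (hence is invariant under the tangential part of the snap, since the prism centers $p_i$ lie in $\nu^\perp$), and absorbs the remaining bounded normal shift with an explicit cut-off in a thin shell whose energy vanishes in the iterated limit $S\to\infty$, $T\to\infty$, $m\to\infty$. By contrast, the tile-to-tile boundary matching that you single out as the main obstacle is comparatively easy, precisely because every tile carries the same datum $\widetilde u_{\rho,1,\nu}$ up to a tangential translation; Lemma \ref{lem:DeGslicing} is not really the relevant tool here.

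Two further points. First, your error bookkeeping does not close as written: an uncovered region of volume $O(T^{N-1})$ with bounded energy density contributes $O(1)$ after dividing by $T^{N-1}$, so you need the uncovered \emph{fraction} of the slab to vanish in the iterated limit, as in the paper's estimate $[S^{N-1}-M_{T,S}T^{N-1}]/S^{N-1}\to 0$. Second, outsourcing the existence of the limit to \cite{BFM} is not a substitute for the proof: the paper invokes \cite{BFM} only in Remark \ref{rem:sigma} for the stronger statement that the cube may be fixed, and its proof of the lemma is self-contained; if you want to cite \cite{BFM} instead, you must verify that its hypotheses cover this merely measurable-in-$x$, $\Z^N$-periodic potential with the mollified boundary datum and the infimum over all cubes in $\mathcal{Q}_\nu$.
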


\begin{proof}
Let $\nu \in \S^{N-1}$.
For $T>\sqrt{N}$ let $Q_T \in\mathcal{Q}_{\nu}$ and $u_T\in\mathcal{C}(Q_T,T)$ be such that
\begin{equation}\label{eq:ut}
\frac{1}{T^{N-1}} \int_{T Q_T} W(y, u_T(y)) + |\nabla u_T(y)|^2 dy \leq
    g(T) + \frac{1}{T},
\end{equation}
where, for simplicity of notation, we write $g(T)$ for $g(\nu, T)$.
Let $\{ \nu^{(1)}_T, \dots, \nu^{(N)}_T \}$ be an orthonormal basis of $\R^N$ normal to the faces of $Q_T$ such that $\nu = \nu^{(N)}_T$.
We define an oriented rectangular prism centered at 0 via
\[
P(\alpha, \beta) := \{x \in \R^N : |x \cdot \nu| \leq \beta \textrm{ and } |x \cdot \nu_T^{(i)}| \leq \alpha \textrm{ for } 1 \leq i \leq N-1 \}.
\]

Let $S > T+3+\sqrt{N}$. We claim that for all $m \in \mathbb{N}$ with $2 \leq m < T$, we have 
\begin{equation}\label{eq:estSTsigma}
g(S)\leq g(T)+R(m,S,T)\,,
\end{equation}
where the quantity $R(m,S,T)$ does not depend on $\nu$ and is such that
\[
\lim_{m \to \infty} \lim_{T\to\infty}\lim_{S\to\infty} R(m,S,T)=0.
\]

Note that if this holds then
\[
\limsup_{S \to \infty} g(S) \leq \liminf_{T \to \infty} g(T),
\]
and this ensures the existence of the limit in the definition of $\sigma$. Therefore, the remainder of Step 1 is dedicated to proving \eqref{eq:estSTsigma}.

The idea is to construct a competitor $u_S$ for the infimum problem defining $g(S)$ by taking $\lfloor \frac{S}{T}\rfloor^{N-1}$ copies of $TQ_\nu\cap\nu^\perp$ centered on $\nu^\perp\cap SQ_\nu$ in each of which we define $u_S$ to be (a translation of) $u_T$.
In order to compare the energy of $u_S$ to the energy of $u_T$, we need the copies of the cube $TQ_\nu$ to be integer translations of the original. Moreover, we also have to ensure that the boundary conditions render $u_S$ admissible for the infimum problem defining $g(S)$. For this reason, we need the centers of the translated copies of $TQ_\nu\cap\nu^\perp$ to be close to $\nu^\perp\cap SQ_\nu$ (recall that the mollifiers $\rho_{T,\nu}$ and $\rho_{S,\nu}$ only depend on the direction $\nu$).\\

Set
\[
M_{T,S} := \bigg \lfloor \frac{S-\frac{1}{T}}{T+\sqrt{N}+2} \bigg \rfloor^{N-1}\,,
\]
and notice that
\begin{equation}\label{eq:mst}
\lim_{T\to\infty}\lim_{S\to\infty} \frac{T^{N-1}}{S^{N-1}}M_{T,S}=1\,.
\end{equation}
We can tile $\left( S - \frac{1}{T} \right) Q_T$ with disjoint prisms $\left\{p_i + P\left( T + \sqrt{N} + 2, S - \frac{1}{T}\right) \right\}_{i=1}^{M_{T,S}}$ so that
\[
p_i + P\left( T + \sqrt{N} + 2, S - \frac{1}{T} \right) \subset \left(S-\frac{1}{T} \right) Q_T,
\quad\quad p_i \in \nu^\perp,
\]
for each $i \in \{1, \dots, M_{T,S} \}$. In each cube $p_i + \sqrt{N} Q_T$ we can find $x_i \in \Z^N$ since dist$( \cdot, \Z^N) \leq \sqrt{N}$ in $\R^N$, and we have
\[
x_i + (T+2)Q_T \subset p_i + (T+\sqrt{N}+2)Q_T.
\]
Consider, for $m\in\N$ and $i \in \{1, \dots, M_{T,S} \}$ cut-off functions $\varphi_{m, i} \in C_c(x_i + (T + \frac{1}{m})Q_T; [0,1])$ be such that
\begin{equation}\label{eq:estphimi}
\varphi_{m, i}(x) =
\left\{
\begin{array}{ll}
0 & \textrm{ if } x \in \partial \bigg( x_i + \bigg(T+ \frac{1}{m} \bigg) Q_T  \bigg),\\
&\\
1 & \textrm{ if } x \in x_i + T Q_T,\\
\end{array}
\right.
\quad\quad
\|\nabla \varphi_{m, i}\|_{L^\infty} \leq Cm,
\end{equation}
for some $C>0$.
Define $u_S: SQ_T\to\R^d$ by
\[
u_S(x) :=
\left\{
\begin{array}{l}
u_T(x-x_i)   \hspace{4cm}\textrm{ if } x \in x_i + T Q_T, \\
\\
\varphi_{m, i}(x)(\rho * u_{0,\nu})(x+p_i- x_i)+(1-\varphi_{m, i}(x))(\rho * u_{0,\nu})(x) \\
\hspace{6cm}\textrm{if } x \in (x_i +(T + \frac{1}{m})Q_T) \setminus (x_i + TQ_T),\\
\\
(\rho* u_{0,\nu})(x) \hspace{3.6cm}\textrm{ otherwise.}
\end{array}
\right.
\]

\begin{figure}\label{fig:competitor}
\includegraphics[scale=0.8]{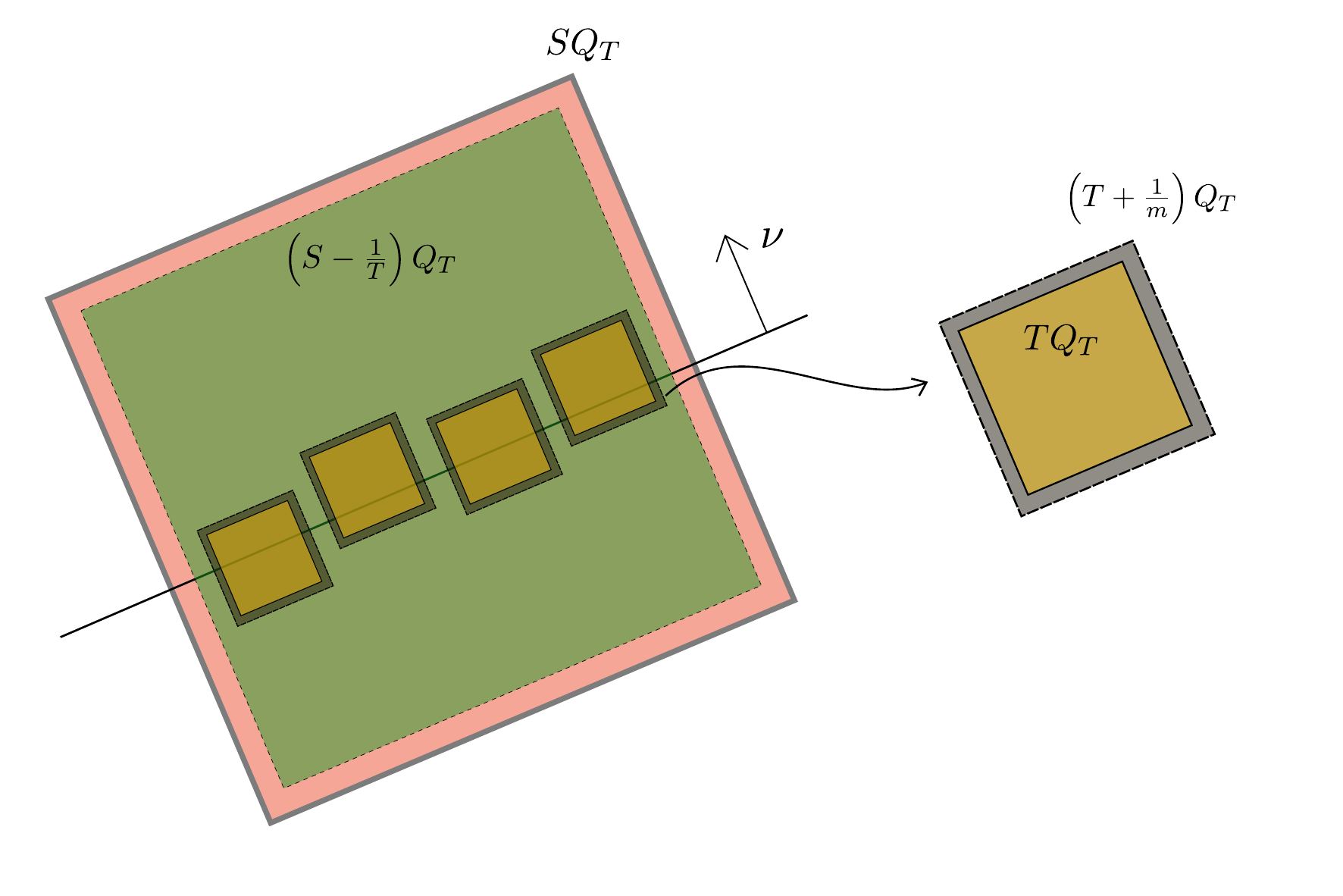}
\caption{Construction of the function $u_S$: in each yellow cube $x_i + TQ_T$ we defined it as a copy of $u_T$ and we use the grey region $(x_i +(T + \frac{1}{m})Q_T) \setminus (x_i + TQ_T)$ around it to adjust the boundary conditions and make them match the value of $u_S$ in the green region.
Finally, in the pink region $SQ_T \setminus (S - \frac{1}{T})Q_T$ we make the transition in order for $u_S$ to be an admissible competitor for the infimum problem defining $g(S)$.}
\end{figure}

Notice that since $p_i \cdot \nu = 0$, if $x\in \partial(x_i + T Q_T)$ we have
\begin{align*}
u_{T}(x -  x_i)=  (\rho*u_{0,\nu})(x  - x_i) = (\rho*u_{0,\nu})(x  +p_i - x_i).
\end{align*}
Thus $u_S \in H^1(SQ_T; \R^d)$ and, if $x\in\partial \left(SQ_T \right)$ then
$u_S(x)=(\rho\ast u_{0,\nu})(x)$, so $u_S$ is admissible for the infimum in the definition of $g(S)$.
In particular,
\begin{align}
\notag
g(S) &\leq \frac{1}{S^{N-1}} \int_{SQ_T} \bigg[ W(x, u_S(x)) + |\nabla u_S(x)|^2 \bigg] dx \\ \notag
&=\frac{1}{S^{N-1}}\f_1(u_S, SQ_T)\\
\label{eq:4parts}
& =: I_1(T,S) + I_2(T,S,m) + I_3(T,S,m),
\end{align}
where
\begin{align*}
I_1(T,S) :=  & \frac{1}{S^{N-1}} \sum_{i=1}^{M_{T,S}} \f_1( u_S, x_i+TQ_T),\\
I_2(T,S,m) := & \frac{1}{S^{N-1}} \sum_{i=1}^{M_{T,S}} \f_1\left( u_S, \left(x_i +\left(T+\frac{1}{m}\right)Q_T\right) \setminus (x_i + TQ_T)  \right),\\
I_3(T,S,m) := & \frac{1}{S^{N-1}} \f_1( u_S, E_{T,S,m}),
\end{align*}
and we set
\[
E_{T,S,m} := S Q_T \setminus \bigcup_{i=1}^{M_{T,S}} \left(x_i + \left(T+\frac{1}{m}\right) Q_T\right).
\]


We now bound each of terms $I_1, I_2, I_3$ separately. 
We start with $I_1(T,S)$. Since $x_i \in \mathbb{Z}^N$, the periodicity of $W$ together with \eqref{eq:ut} yield
\begin{align}\label{eq:I1}
\notag
I_1(T,S) &= \frac{1}{S^{N-1}} M_{S,T} \int_{T Q_T} \left[ W(x, u_T(x)) + |\nabla u_T(x)|^2 \right] dx \nonumber \\
&\leq \frac{1}{S^{N-1}} M_{T,S} T^{N-1} \bigg( g(T) + \frac{1}{T} \bigg).
\end{align}
In order to estimate $I_2(T,S,m)$, notice that, since for every $x\in\R^N$ the function
$t\mapsto (\rho * u_{0,\nu})(x+t\nu)$ is constant outside of an interval of size $1$, we have that for every $i\in\{1,\dots, M_{T,S}\}$ it holds
\begin{align}\label{eq:estrhotnu}
&\int_{\left(x_i +\left(T+\frac{1}{m}\right)Q_T\right) \setminus (x_i + TQ_T)}
   |\nabla(\rho_{1} * u_{0,\nu})(x+p_i-x_i) |^2 \dx \nonumber\\
&\hspace{4cm}\leq {\| \nabla(\rho * u_{0,\nu})\|}_{L^\infty}^2
    \bigg[ \bigg( T+\frac{1}{m}\bigg)^{N-1} - T^{N-1}\bigg]\,.
\end{align}
Thus, using \eqref{eq:estphimi} and \eqref{eq:estrhotnu} we obtain
\begin{align}\label{eq:I2}
I_2(T,S,m) &\leq \frac{C}{S^{N-1}} M_{T,S} \bigg[ \bigg( \frac{\sqrt{N}}{2} + 1 \bigg)
	(1+ {\|\nabla \varphi_{m,i}\|}_{L^\infty}^2)
    +  {\| \nabla(\rho* u_{0,\nu})\|}_{L^\infty}^2 \nonumber \\
&\hspace{2.5cm}  +   {\| \nabla(\rho* u_{0,\nu})\|}_{L^\infty}^2 \bigg]
    \bigg[ \bigg( T+\frac{1}{m}\bigg)^{N-1} - T^{N-1}\bigg] \nonumber \\
&\hspace{0.4cm}\leq \frac{C}{S^{N-1}} M_{T,S} \left( 1+m^2  \right)
    \bigg[ \bigg(T+\frac{1}{m}\bigg)^{N-1} - T^{N-1} \bigg]  \nonumber \\
&\hspace{0.4cm}\leq C \frac{T^{N-1}}{S^{N-1}}M_{T,S} \left(1+m^2 \right)
    \bigg[ \bigg(1 + \frac{1}{Tm} \bigg)^{N-1} - 1 \bigg] \nonumber \\
&\hspace{0.4cm}\leq C \frac{T^{N-1}}{S^{N-1}}M_{T,S}\left( 1+m^2 \right) 
    \left( \frac{N-1}{Tm} \right) =:J_2(T,S,m) 
\end{align}
where in the last step we used the inequality
\begin{equation}\label{eq:taylor}
(1+t)^{N-1}\leq 1+C(N-1)t
\end{equation}
for $t\ll 1$, that is valid here when $T\gg1$.

We can finally estimate $I_3(T,S,m)$ as
\begin{align}\label{eq:I3}
I_3(T,S,m) &=\frac{1}{S^{N-1}} \int_{E_{T,S,m}} \left[\, W(x, \rho * u_{0,\nu})
	+ |\nabla(\rho * u_{0,\nu})|^2 \,\right] \dx \nonumber \\
&\leq \frac{C}{S^{N-1}} \bigg| E_{T,S,m} \cap \bigg\{|x \cdot \nu|< 1 \bigg\}
    \bigg| \bigg( 1+ {\| \nabla(\rho* u_{0,\nu})\|}_{L^\infty}^2 \bigg) \nonumber \\
& \leq \frac{C}{S^{N-1}} \bigg[ S^{N-1} - M_{T,S} T^{N-1} \bigg] =:J_3(T,S,m) .
\end{align} 

Taking into account \eqref{eq:I2} and \eqref{eq:I3} we obtain
\begin{equation}\label{eq:limlimlim}
\lim_{m \to \infty} \lim_{T \to \infty} \lim_{S \to \infty} \,[\, J_2(T,S,m) + J_3(T,S,m) \,] = 0.
\end{equation}
Thus, in view of \eqref{eq:4parts}, \eqref{eq:I1}, \eqref{eq:mst} and \eqref{eq:ut}, we conclude \eqref{eq:estSTsigma} with
\begin{equation}\label{eq:Rmst}
R(m, S, T) := J_2(T,S,m) + J_3(T,S,m) .
\end{equation}
Notice that $R(m,S,T)$ does not depend on $\nu$ nor on $Q_T$.

Finally, to prove that $\sigma(\nu)<\infty$ for all $\nu\in\S^{N-1}$ we notice that, by sending $S\to\infty$ in \eqref{eq:estSTsigma}
we get
\[
\sigma(\nu)\leq g(T)+\lim_{S\to\infty}R(m,S,T).
\]
Since $g(T)<\infty$ and, by \eqref{eq:limlimlim} and \eqref{eq:Rmst}, $\lim_{S\to\infty}R(m,S,T)<\infty$ for all $T>0$, we conclude. 
\end{proof}

\begin{remark}\label{rem:exlimfixcube}
The proof of Lemma \ref{lem:estimatesigma} shows, in particular, that
\[
\lim_{T\to\infty} \frac{1}{T^{N-1}}\inf\Bigl\{\, \int_{T Q}\left[ W(y,u(y))+|\nabla u|^2  \right]\dd y \,:\,
    u\in \mathcal{C}(Q, T)   \,\Bigr\}
\]
exists, for every $\nu\in\S^{N-1}$ and every $Q\in\mathcal{Q}_\nu$. This will be used later in the proof of Lemma \ref{lem:sigmaQ}.
\end{remark}

Next we show that the definition of $\sigma(\nu)$ does not depend on the choice of the mollifier $\rho$ we choose to impose the boundary conditions.

\begin{lemma}\label{lem:indepmoll}
For every $\nu\in\S^{N-1}$ the definition of $\sigma(\nu)$ does not depend on the choice of the mollifier $\rho$.
\end{lemma}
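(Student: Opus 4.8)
The plan is to show that two mollifiers $\rho_1$ and $\rho_2$ give the same value $\sigma(\nu)$ by a symmetric comparison argument: I will bound the infimum defining $g(\nu,T)$ with boundary datum $\widetilde{u}_{\rho_1,1,\nu}$ from above by the corresponding infimum with datum $\widetilde{u}_{\rho_2,1,\nu}$, up to an error that vanishes as $T\to\infty$; by symmetry the two limits then coincide. The key observation is that both mollified profiles $\widetilde{u}_{\rho_1,1,\nu}$ and $\widetilde{u}_{\rho_2,1,\nu}$ are bounded, smooth, depend only on the scalar variable $y\cdot\nu$, and agree with $u_{0,\nu}$ (hence with each other) outside the slab $\{|y\cdot\nu|\le 1\}$; moreover their gradients are bounded by a constant depending only on $\rho_1,\rho_2$.

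First I would fix $\nu\in\S^{N-1}$ and a cube $Q_\nu\in\mathcal{Q}_\nu$, and take a near-optimal competitor $u_T\in\mathcal{C}(\rho_1,Q_\nu,T)$ for $g(\nu,T)$ computed with mollifier $\rho_1$, so that $\frac{1}{T^{N-1}}\int_{TQ_\nu}[W(y,u_T)+|\nabla u_T|^2]\,\dd y\le g_{\rho_1}(\nu,T)+\frac1T$. Next I would modify $u_T$ in a thin boundary frame: choose a cut-off $\varphi_T\in C_c^\infty(TQ_\nu;[0,1])$ with $\varphi_T=1$ on $(T-2)Q_\nu$, $\varphi_T=0$ near $\partial(TQ_\nu)$, and $\|\nabla\varphi_T\|_\infty\le C$, and set $w_T:=\varphi_T u_T+(1-\varphi_T)\widetilde{u}_{\rho_2,1,\nu}$. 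Then $w_T\in\mathcal{C}(\rho_2,Q_\nu,T)$, so it is admissible for the $\rho_2$-infimum. On $(T-2)Q_\nu$ we have $w_T=u_T$, so the energy there is unchanged. On the frame $TQ_\nu\setminus(T-2)Q_\nu$ I estimate the energy of $w_T$ using (H4): the potential term is controlled because $W(y,w_T)\le C(1+|w_T|^q)$ and $w_T$ is a bounded interpolation, and the gradient term is controlled by $|\nabla\varphi_T|^2|u_T-\widetilde{u}_{\rho_2,1,\nu}|^2+|\nabla u_T|^2+|\nabla\widetilde{u}_{\rho_2,1,\nu}|^2$. Since the frame has volume $O(T^{N-2})$ and the relevant integrands (apart from $|\nabla u_T|^2$) are uniformly bounded there, these contributions are $O(T^{N-2})$. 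To handle the $\int_{\text{frame}}|\nabla u_T|^2$ term — which is not obviously $O(T^{N-2})$ — I would instead run the De Giorgi slicing argument of Lemma \ref{lem:DeGslicing} (see Remark \ref{rem:DG}): divide a boundary layer of fixed width into $M$ sublayers, pick the one on which $\int|\nabla u_T|^2+|u_T-\widetilde u_{\rho_2,1,\nu}|^2$ is at most $\frac1M$ of the total, and perform the interpolation only across that sublayer, choosing $M=M(T)\to\infty$ slowly so that the energy cost of the cut-off is negligible compared to $T^{N-1}$.

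Dividing by $T^{N-1}$ and letting $T\to\infty$ gives $\limsup_{T\to\infty}g_{\rho_2}(\nu,T)\le\lim_{T\to\infty}g_{\rho_1}(\nu,T)=\sigma_{\rho_1}(\nu)$, and since by Lemma \ref{lem:estimatesigma} the limit defining $g_{\rho_2}$ exists, this reads $\sigma_{\rho_2}(\nu)\le\sigma_{\rho_1}(\nu)$. Exchanging the roles of $\rho_1$ and $\rho_2$ yields the reverse inequality, hence $\sigma_{\rho_1}(\nu)=\sigma_{\rho_2}(\nu)$. The main obstacle is the term $\int_{\text{frame}}|\nabla u_T|^2$: without an a priori bound on how the Dirichlet energy of the near-optimal competitor concentrates near the boundary, one cannot simply call it $O(T^{N-2})$, and this is exactly what the slicing trick of Lemma \ref{lem:DeGslicing} is designed to circumvent — so the bulk of the work is setting up that slicing on the boundary frame of $TQ_\nu$ rather than inventing anything new.
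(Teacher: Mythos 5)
Your overall symmetric comparison scheme ($\sigma_{\rho_2}\le\sigma_{\rho_1}$ plus exchanging roles) is the right one, but the gluing step as you set it up has a genuine gap. First, the quantitative claim is wrong: the frame $TQ_\nu\setminus(T-2)Q_\nu$ has volume of order $T^{N-1}$, not $O(T^{N-2})$, so ``bounded integrand on the frame'' only gives an error of the same order $T^{N-1}$ as the energy itself. The $O(T^{N-2})$ count would be correct only if the integrand were supported in the unit slab $\{|y\cdot\nu|\le 1\}$; this is true for the mollified profiles $\widetilde{u}_{\rho_1,1,\nu},\widetilde{u}_{\rho_2,1,\nu}$ (which are constant and equal outside the slab, with bounded gradients), but not for the near-optimal competitor $u_T$, since admissibility prescribes $u_T$ only on $\partial(TQ_\nu)$ and says nothing about its behavior in a neighborhood of the boundary. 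Second, the De Giorgi slicing of Lemma \ref{lem:DeGslicing} does not repair this. In that lemma the engine is the smallness $\lambda_k=\|u_k-v_k\|_{L^2}\to 0$, which permits layers of width $\eta_k\lambda_k$ and makes the cross term $\frac{1}{\lambda_k^2\eta_k}|u_k-v_k|^2$ integrate to something vanishing on the selected layer. In your setting the analogous quantity $\|u_T-\widetilde{u}_{\rho_2,1,\nu}\|^2_{L^2(\mathrm{frame})}$ is only $O(T^{N-1})$ (all that (H4) and the energy bound give), and nothing forces a near-optimal cell-problem competitor to be $L^2$-close to $u_{0,\nu}$ near $\partial(TQ_\nu)$. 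Concretely, with $M$ sublayers of width $\sim 1/M$ the cutoff gradient is $\sim M$, and the cross term on the best sublayer is bounded only by $C M^{2}\cdot\frac{1}{M}\|u_T-\widetilde{u}_{\rho_2,1,\nu}\|^2_{L^2(\mathrm{frame})}\sim C M T^{N-1}$, which grows with $M$; slicing only spares you $\int_{\mathrm{frame}}|\nabla u_T|^2$, not the interpolation cross term nor the potential term of the interpolant.

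The paper's proof sidesteps this by placing the transition \emph{outside} the given cube rather than inside it: keep $u_T$ untouched on $TQ_\nu$ (its trace there is $\rho^{(1)}\ast u_{0,\nu}$), pass to the enlarged cube $(T+1)Q_\nu$, and in the added frame $(T+1)Q_\nu\setminus TQ_\nu$ interpolate between $\rho^{(1)}\ast u_{0,\nu}$ and $\rho^{(2)}\ast u_{0,\nu}$ with a cutoff. There the function is built solely from the two mollified profiles, which coincide with $a$ or $b$ outside the unit slab and have gradients bounded independently of $T$, so the added energy is at most $C[(T+1)^{N-1}-T^{N-1}]=O(T^{N-2})$ and disappears after dividing by $(T+1)^{N-1}$ (the shift from $T$ to $T+1$ in the normalization is harmless since the limit in $T$ exists by Lemma \ref{lem:estimatesigma}). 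If you want to keep your structure, replace the inner boundary frame by this outer extension; as written, the inner-frame interpolation cannot be justified with the tools available at this point of the paper.
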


\begin{proof}
Fix $\nu\in\S^{N-1}$ and let $\{T_n\}_{n\in\N}$ be such that $T_n\to\infty$ as $n\to\infty$.
Let $\rho^{(1)}, \rho^{(2)}\in C^\infty_c(B(0,1))$ be two mollifiers and let us denote by $\sigma(\nu,\rho^{(1)})$ and
$\sigma(\nu,\rho^{(2)})$ the functions defined as in Definition \ref{def:sigma} using $\rho^{(1)}$ and $\rho^{(2)}$, respectively, to impose the boundary conditions for the admissible class of functions.
Let $\{Q_n\}_{n\in\N}\subset \mathcal{Q}_\nu$ and $\{u^{(1)}_n\}_{n\in\N}\subset H^1(T_n Q_n;\R^d)$ with $u^{(1)}_n:=\rho^{(1)}\ast u_{0,\nu}$ on $\partial T_nQ_n$ be such that
\begin{equation}\label{eq:indrho1}
\lim_{n\to\infty} \frac{1}{T^{N-1}_n}\f_1(u_n^{(1)},T_nQ_n)=\sigma(\nu,\rho^{(1)}).
\end{equation}
For every $n\in\N$, consider the cubes $\left(T_n+1\right)Q_n$ and a function
$\varphi_n\in C^\infty((T_n +1)Q_n)$ with $0\leq \varphi_n,\leq 1$ such that $\varphi_{n}\equiv 1$ in $T_nQ_n$, $\varphi_{n}\equiv 0$ on $\partial[(T_n+1)Q_n]$.
For every $n\in\N$ define $u_n^{(2)}\in H^1((T_n+1)Q_n;\R^d)$ as
\[
u_n^{(2)}(x):=\left\{
\begin{array}{ll}
u_n^{(1)}(x) & \text{ if } x\in T_nQ_n,\\
\varphi_n(x)(\rho^{(1)}\ast u_{0,\nu})(x)
    +\left(1-\varphi_n(x)\right)(\rho^{(2)}\ast u_{0,\nu})(x) & \text{ otherwise }.
\end{array}
\right.
\]

Then $u_n^{(2)}=\rho^{(2)}\ast u_{0,\nu}$ on $\partial[(T+1)Q_n]$
and $u_n^{(2)}$ is constant (taking values $a$ or $b$) outside of the set
$\{(x',x_\nu)\in\R^N \,:\, x'\in Q'_n, x_\nu=s\nu \text{ for } s\in [-1,1] \}$ where
$Q'_n:=[(T_n+1)Q_n\setminus T_n Q_n]\cap\nu^\perp$.
We have
\begin{equation}\label{eq:indrho2}
\frac{1}{(T_n+1)^{N-1}}\f_1\left(u_n^{(2)}, \left(T_n+1\right)Q_n\right) \leq \frac{1}{T^{N-1}_n}\f_1(u_n^{(1)},T_nQ_n) + R_n,
\end{equation}
where 
\begin{align*}
R_n&:=\frac{1}{T_n^{N-1}}\int_{(T_n+1Q\setminus T_nQ_n} [\, W(y,u_n^{(2)}(y)) + |\nabla u_n^{(2)}(y)|^2 \,] \dy \\
& \leq \frac{C}{T_n^{N-1}}\left[ \left( T_n+1\right)^{N-1} - T_n^{N-1} \right] \\
&\leq \frac{C}{T_n}\,,
\end{align*}
where in the last inequality we use \eqref{eq:taylor}. Using \eqref{eq:indrho1} and \eqref{eq:indrho2} we get
\[
\sigma(\nu,\rho^{(2)})\leq \liminf_{n\to\infty }\frac{1}{(T_n+1)^{N-1}}
    \f_1\left(u_n^{(2)}, \left(T_n+1\right)Q_n\right) \leq \sigma(\nu,\rho^{(1)}).
\]
By swapping the roles of $\rho^{(1)}$ and $\rho^{(2)}$ we get the desired result.
\end{proof}

We now prove a regularity property for the function $\sigma$.

\begin{proposition}\label{prop:sigma}
The function $\sigma:\S^{N-1}\to[0,\infty)$ is upper semi-continuous.
\end{proposition}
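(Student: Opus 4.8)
The plan is to fix $\nu\in\S^{N-1}$, take an arbitrary sequence $\nu_k\to\nu$ in $\S^{N-1}$, and show $\limsup_{k\to\infty}\sigma(\nu_k)\le\sigma(\nu)$. The heart of the matter is a comparison estimate analogous to \eqref{eq:estSTsigma} in Lemma \ref{lem:estimatesigma}, but now comparing $g(\nu_k,T)$ with $g(\nu,T)$ for fixed large $T$: I want to show that for every $\delta>0$ there is $T_0$ such that for all $T\ge T_0$ and all $k$ large enough (depending on $T$),
\[
g(\nu_k,T)\le g(\nu,T)+\omega(T)+\eta_k(T),
\]
where $\omega(T)\to0$ as $T\to\infty$ and $\eta_k(T)\to0$ as $k\to\infty$ for each fixed $T$. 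Passing $k\to\infty$ and then $T\to\infty$ would give $\limsup_k\sigma(\nu_k)\le\liminf_T g(\nu,T)=\sigma(\nu)$.

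To build the competitor: fix $T$ and pick $Q_\nu\in\mathcal{Q}_\nu$ and a near-optimal $u_T\in\mathcal{C}(\rho,Q_\nu,T)$ for $g(\nu,T)$. Choose a rotation $R_k$ with $R_k\nu_k=\nu$ and $R_k\to\mathrm{Id}$ (possible since $\nu_k\to\nu$); let $Q_{\nu_k}:=R_k^{-1}Q_\nu\in\mathcal{Q}_{\nu_k}$. On $TQ_{\nu_k}$ define the candidate $u_k^T(y):=u_T(R_k y)$ away from the boundary layer. The issue is that the boundary datum for $\mathcal{C}(\rho,Q_{\nu_k},T)$ is $\widetilde u_{\rho,1,\nu_k}=\rho_1\ast u_{0,\nu_k}$, whereas $u_T(R_k\,\cdot)$ carries the datum $\rho_1\ast u_{0,\nu}$ composed with $R_k$. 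Since $u_{0,\nu_k}$ and $u_{0,\nu}$ agree except in a thin wedge near $\partial(TQ_{\nu_k})$ whose $\hno$-measure is $O(T^{N-1}|\nu_k-\nu|)$, and both mollified profiles are Lipschitz with constant depending only on $\rho$, I can glue — on a unit-width collar inside $\partial(TQ_{\nu_k})$, using a cutoff as in the proof of Lemma \ref{lem:indepmoll} — from $u_T(R_k\,\cdot)$ to the correct datum $\widetilde u_{\rho,1,\nu_k}$. The gluing region has volume $O(T^{N-2})$ (a unit collar of an $(N-1)$-dimensional face of size $T$) plus a wedge of volume $O(T^{N-1}|\nu_k-\nu|)$; on it $W$ and $|\nabla(\cdot)|^2$ are bounded by a constant depending only on $\rho$ and (H4), since the functions involved take values in a fixed bounded set. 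After dividing by $T^{N-1}$, this contributes $O(1/T)+O(|\nu_k-\nu|)$. Meanwhile the bulk term transforms as $\int_{TQ_{\nu_k}}[W(y,u_T(R_ky))+|\nabla(u_T\circ R_k)(y)|^2]\,dy=\int_{TQ_\nu}[W(R_k^{-1}z,u_T(z))+|\nabla u_T(z)|^2]\,dz$ (rotations are volume-preserving and $|\nabla(u\circ R)|=|{\nabla u}\circ R|$ since $R$ is orthogonal), so the only discrepancy from $\f_1(u_T,TQ_\nu)$ is the replacement of $W(z,\cdot)$ by $W(R_k^{-1}z,\cdot)$.

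That last point is the main obstacle: $W(\cdot,p)$ is merely measurable in $x$ (only (H1)(i)), so $\int_{TQ_\nu}|W(R_k^{-1}z,u_T(z))-W(z,u_T(z))|\,dz$ need not be small just because $R_k\to\mathrm{Id}$. To handle this I would exploit that $u_T$ can first be taken to have controlled $L^q$-norm (by (H4) and the energy bound) and, more importantly, I would reverse the roles: rather than rotating the test function, I would follow the scheme of Lemma \ref{lem:estimatesigma} and tile a large cube $SQ_{\nu_k}$ — using integer translations so that the $Q$-periodicity of $W$ is respected exactly — with copies of $TQ_{\nu_k}$ in each of which $u_k^T$ is an integer-translated copy of a single fixed competitor, thereby reducing $g(\nu_k,S)$ to $g(\nu_k,T)$ up to the universal remainder $R(m,S,T)$ of \eqref{eq:Rmst}; it then suffices to bound $g(\nu_k,T)$ for one fixed moderate $T$, which is what the rotation–plus–gluing argument above delivers with an error $O(|\nu_k-\nu|)+\omega(T)$. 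Combining, $\sigma(\nu_k)=\lim_S g(\nu_k,S)\le g(\nu_k,T)+\lim_S R(m,S,T)$, then $\limsup_k\sigma(\nu_k)\le g(\nu,T)+\omega(T)+\lim_S R(m,S,T)$, and finally letting $T\to\infty$ (and $m\to\infty$) using \eqref{eq:limlimlim} yields $\limsup_k\sigma(\nu_k)\le\sigma(\nu)$. The only genuinely delicate estimate is controlling the wedge error $O(T^{N-1}|\nu_k-\nu|)$ against the rate at which one must send $k\to\infty$ relative to $T$; since $T$ is fixed before $k\to\infty$, this causes no trouble.
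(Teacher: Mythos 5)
Your overall architecture matches the paper's: first compare $g(\nu_k,T)$ with $g(\nu,T)$ at fixed $T$ by rotating a near-optimal competitor, then pass to $\sigma$ using the $\nu$-independent remainder $R(m,S,T)$ from \eqref{eq:estSTsigma}--\eqref{eq:Rmst} (this second step is exactly what the paper does, and your use of it is fine). The gluing of boundary data on a unit collar is also acceptable, though the paper sidesteps it more cleanly by invoking Lemma \ref{lem:indepmoll} to choose a mollifier invariant under the rotations $\mathcal{R}_k$, so that $u_T(\mathcal{R}_k\,\cdot)$ is automatically admissible for $g(\nu_k,T)$.

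The genuine gap is precisely the point you flag as ``the main obstacle'' and then do not resolve: proving
\[
\int_{TQ_\nu}\bigl|\,W(\mathcal{R}_k^{-1}z,u_T(z))-W(z,u_T(z))\,\bigr|\,\dd z\;\longrightarrow\;0
\quad\text{as }k\to\infty,
\]
for fixed $T$, when $x\mapsto W(x,p)$ is merely measurable. Your proposed ``reversal of roles'' does not touch this: the tiling by integer translates only reproduces the reduction of $g(\nu_k,S)$ to $g(\nu_k,T)$ (the paper's Step 2, via \eqref{eq:estSTsigma}), which concerns a single fixed normal $\nu_k$ and never relates the orientation $\nu_k$ to $\nu$; you then explicitly fall back on ``the rotation--plus--gluing argument above'' to bound $g(\nu_k,T)$, i.e.\ on the very estimate that was missing. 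Controlling the $L^q$-norm (or even the $L^\infty$-norm) of $u_T$ does not help either, since the problem is continuity in the $x$-variable, not in $p$: continuity of rotations in $L^1$ applies to $z\mapsto W(z,p)$ for a \emph{fixed} $p$, but here $p=u_T(z)$ varies with $z$, and without uniform-in-$x$ continuity in $p$ you cannot reduce to simple functions. The paper closes exactly this hole with the Scorza--Dragoni theorem combined with the Tietze extension theorem: after truncating ($u_T\in L^\infty$, so $W(x,u_T(x))\le M$ by (H4)), one replaces $W$ by a globally continuous $\widetilde W$ agreeing with $W$ off a compact set $E$ of measure $<\eta$ and bounded by $M$ along $u_T$; the rotated-versus-unrotated discrepancy of $\widetilde W$ vanishes as $\mathcal{R}_k\to\mathrm{Id}$, and the exceptional set contributes at most $C M\eta$. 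Without this (or an equivalent Lusin-type approximation in $x$), your claim $\eta_k(T)\to0$ is unproven, so the proof as proposed is incomplete at its central step.
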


\begin{proof}
\emph{Step 1.}
Fix $\nu\in\S^{N-1}$ and let $\{\nu_n\}_{n\in\N}\subset\S^{N-1}$ be such that $\nu_n\to\nu$ as $n\to\infty$.
We first prove that, for fixed $T>0$, the function $\nu\mapsto g(\nu,T)$ is continuous.
We claim that $\limsup_{n\to\infty}g(\nu_n,T)\leq g(\nu,T)$.
Fix $\varepsilon>0$. Let $Q_\nu\in\mathcal{Q}_\nu$ and $u\in \mathcal{C}(TQ_\nu,\nu)$ be such that
\begin{equation}\label{eq:approxnu}
\left|T^{N-1}g(\nu,T)-\int_{T Q_\nu}\left[\, W(y,u(y))+|\nabla u|^2  \,\right]\dd y\right|<\e\,.
\end{equation}
Without loss of generality, by density, we can assume that $u\in L^\infty(\o;\R^d)$.
For every $n\in\N$, let $\mathcal{R}_n:\R^N\to\R^N$ be a rotation such that $\mathcal{R}_n\nu_n=\nu$ and $\mathcal{R}_n\to\mathrm{Id}$ as $n\to\infty$, where $\mathrm{Id}:\R^N\to\R^N$ is the identity map.
Moreover, thanks to Lemma \ref{lem:indepmoll}, we can assume the mollifier $\rho$ and the rotations $\mathcal{R}_n$ to be such that $\rho(\mathcal{R}_ny) = \rho(y)$ for all $y\in\R^N$ and $n\in\N$.
Notice that it is possible to satisfy this condition because  $\mathcal{R}_n\nu_n=\nu$ for all $n\in\N$.
Define $u_n\in\mathcal{C}(TQ_{\nu_n},\nu_n)$ as $u_n(y):=u(\mathcal{R}_n y)$.
By \eqref{eq:approxnu} we have
\begin{align}\label{eq:ulsgT}
T^{N-1}g(\nu_n, T)&\leq 
\int_{T Q_{\nu_n}}\left[ W(y,u_n(y))+|\nabla u_n|^2  \right]\dd y \nonumber \\
&\leq \int_{T Q_{\nu}}\left[ W(y,u(y))+|\nabla u|^2  \right]\dd y + \delta_n \nonumber \\
&\leq T^{N-1}g(\nu,T)+\e+\delta_n\,,
\end{align}
where
\[
\delta_n:=\left|\,  \int_{T Q_{\nu_n}} W(y,u_n(y)) \dd y - \int_{T Q_{\nu}}W(y,u(y))  \dd y  \,\right|\,.
\]
We claim that $\delta_n\to0$ as $n\to\infty$. Since $\e>0$ is arbitrary in \eqref{eq:ulsgT}, this would confirm the claim.

Fix $\eta>0$ and let $M:=C(1+\|u\|_{L^\infty}^q)$, where $C>0$ and $q\geq2$ are given by (H4).
Let $K\subset\R^N$ be a compact set such that $TQ_\nu\subset K$ and $TQ_{\nu_n}\subset K$ for every $n\in\N$.
Notice that $W(x,u(x))\leq M$ for all $x\in TQ_\nu$.
Using the Scorza-Dragoni theorem (see \cite[Theorem 6.35]{FonLeo}) and the Tietze extension theorem (see \cite[Theorem A.5]{FonLeo}), we can find a compact set $E\subset K$ with $|E|<\eta$ and continuous map
$\widetilde{W}:K\times\R^d\to[0,\infty)$ such that $\widetilde{W}(x,\cdot)=W(x,\cdot)$ for all
$x\in K\setminus E$ and $|\widetilde{W}(x,u(x))|\leq M$ for every $x\in K$.
We claim that
\begin{equation}\label{eq:estw1}
\int_{TQ_\nu} \left|\, W(y,u(y))-\widetilde{W}(y,u(y)) \,\right| \dy\leq C\eta\,,
\end{equation}
and that
\begin{equation}\label{eq:estw2}
\int_{TQ_{\nu_n}} \left|\, W(y,u_n(y))-\widetilde{W}(y,u_n(y))  \,\right| \dy\leq C\eta\,.
\end{equation}
Indeed
\begin{align*}
\int_{TQ_\nu} \left|\, W(y,u(y))-\widetilde{W}(y,u(y)) \,\right| \dy&=
    \int_{E} \left|\, W(y,u(y))-\widetilde{W}(y,u(y)) \,\right| \dy\\
&\leq 2M |E|\\
&\leq 2M \eta\,.
\end{align*}
A similar argument yields \eqref{eq:estw2}.
Since $TQ_\nu$ is bounded
\begin{equation}\label{eq:estw3}
\int_{T Q_\nu} \left|\, \widetilde{W}(\mathcal{R}_n y,u(y)) - \widetilde{W}(y,u(y)) \,\right| \dy\to0\,,
\end{equation}
as $n\to\infty$. Thus, from \eqref{eq:estw1}, \eqref{eq:estw2} and \eqref{eq:estw3} we obtain
\[
\limsup_{n\to\infty}\delta_n\leq 2C\eta\,.
\]
The claim follows from the arbitrariness of $\eta$.

In an analogous way it is possible to show that $\liminf_{n\to\infty}g(\nu_n,T)\geq g(\nu,T)$, and thus we conclude that the function $\nu\to g(\nu,T)$ is continuous.\\

\emph{Step 2.} Fix $\nu \in \S^{N-1}$, $\e>0$, and let $T>0$ be such that
\begin{equation}\label{eq:epssigmanu}
|g(\nu,T)-\sigma(\nu)|<\e\,.
\end{equation}
Let $\{\nu_n\}_{n\in\N}$ be a sequence converging to $\nu$. By Step 1 we have that
\begin{equation}\label{eq:nunconv}
\lim_{n \rga \infty} g(\nu_n,T) = g(\nu,T).
\end{equation}
Then, for $S>T+3+\sqrt{N}$, using \eqref{eq:estSTsigma} and \eqref{eq:epssigmanu} we get, for $m\in\{1,\dots,T\}$,
\begin{align*}
g( \nu_n, S) &\leq g(\nu_n, T)+R(m,S,T) \nonumber \\
&= g(\nu,T) + g(\nu_n, T) - g(\nu, T) + R(m,S,T) \nonumber \\
&\leq \sigma(\nu) + \e + g(\nu_n,T) - g(\nu,T) + R(m,S,T).
\end{align*}
Taking the limit as $S \to\infty$ we obtain
\[
\sigma(\nu_n) \leq \sigma(\nu) + \e + g(\nu_n,T) - g(\nu,T) + \lim_{S \rga \infty} R(m,S,T).
\]
Letting $n \rga \infty$, by \eqref{eq:nunconv}
\[
\limsup_{n \rga \infty} \sigma(\nu_n) \leq \sigma(\nu) + \e + \lim_{S \rga \infty} R(m,S,T).
\]
Finally, taking $T \to \infty$ and then $m\to\infty$, using \eqref{eq:Rmst}, we conclude that
\[
\limsup_{n \rga \infty} \sigma(\nu_n) \leq \sigma(\nu) + \e
\]
for every $\e > 0$, and thus we obtain upper-semicontinuity.
\end{proof}

The following technical results, that will be fundamental in the proof of the limsup inequality (see Proposition \ref{prop:limsup}), aim at providing two different ways to obtain,
for  $\nu\in\S^{N-1}$, the value $\sigma(\nu)$.

\begin{lemma}\label{lem:goodcubes}
Let $\nu\in\Lambda$. Then
\begin{equation}\label{eq:sigmaandLambda}
\sigma(\nu)=\lim_{T\to\infty} g^\Lambda(\nu,T)\,,
\end{equation}
where
\[
g^\Lambda(\nu,T):=\frac{1}{T^{N-1}}\inf\Bigl\{\, \int_{T Q_\nu}\left[ W(y,u(y))+|\nabla  u|^2  \right]\dd y \,:\,
    Q_\nu\in\mathcal{Q}_\nu^\Lambda,\, u\in \mathcal{C}(Q_\nu, T)   \,\Bigr\}\,,
\]
and $\mathcal{Q}_\nu^\Lambda$ is the family of cubes with unit length side centered at the origin with two faces orthogonal to $\nu$ and the other faces orthogonal to elements of $\Lambda$.
\end{lemma}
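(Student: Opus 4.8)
The plan is to show two inequalities. The inequality $\lim_{T\to\infty} g^\Lambda(\nu,T) \geq \sigma(\nu)$ is immediate: since $\mathcal{Q}_\nu^\Lambda \subset \mathcal{Q}_\nu$, every competitor admissible for $g^\Lambda(\nu,T)$ is also admissible for $g(\nu,T)$, hence $g^\Lambda(\nu,T) \geq g(\nu,T)$ for every $T$, and passing to the limit (the limit on the right exists by Lemma \ref{lem:estimatesigma}) gives the claim — provided we also check that the limit defining $g^\Lambda$ exists, which will follow from the same sub/super-additivity argument used for $\sigma$, or can be circumvented by working with $\liminf$ and $\limsup$ along the way. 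The real content is the reverse inequality $\limsup_{T\to\infty} g^\Lambda(\nu,T) \leq \sigma(\nu)$.

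For the reverse inequality, I would fix $\nu \in \Lambda$ and exploit Proposition \ref{prop:periodW}: there is an orthonormal basis $\nu_1, \dots, \nu_{N-1}, \nu_N = \nu$ of $\R^N$ with all $\nu_i \in \Lambda$, and an integer $P \in \N$ such that $W(\cdot + P\nu_i, p) = W(\cdot, p)$ a.e. for all $p$ and all $i$. Let $Q_\nu^\star \in \mathcal{Q}_\nu^\Lambda$ be the (admissible) cube with faces orthogonal to the $\nu_i$. Given $T$ large, take a near-optimal competitor $u_T$ for the \emph{unrestricted} problem $g(\nu, T)$, built (by Remark \ref{rem:sigma}/the reduction to a fixed cube, or directly) over some cube $Q_\nu \in \mathcal{Q}_\nu$; the strategy is to periodically replicate a piece of $u_T$ across translates of the good cube $T'Q_\nu^\star$ for a suitable scale $T' \sim T$ and tile a large cube $SQ_\nu^\star$, exactly as in the proof of Lemma \ref{lem:estimatesigma} — the key new point being that now the translations used to tile along the directions $\nu_1, \dots, \nu_{N-1}$ can be taken to be integer multiples of $P\nu_i$, hence lattice vectors, so that the $Q$-periodicity of $W$ makes each replicated copy have the same energy. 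Near the lateral boundary one interpolates to the frozen mollified profile $\rho_1 \ast u_{0,\nu}$ using cutoff functions with gradient controlled by the (small) width of the transition collars, precisely mirroring the terms $I_1, I_2, I_3$ in Lemma \ref{lem:estimatesigma}; the error terms are controlled by the same quantity $R(m,S,T)$, which is independent of $\nu$ and vanishes in the iterated limit. This produces, for the restricted class, $g^\Lambda(\nu, S) \leq g(\nu, T) + R(m,S,T)$, and then letting $S\to\infty$, $T\to\infty$, $m\to\infty$ gives $\limsup_S g^\Lambda(\nu,S) \leq \sigma(\nu)$.

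One subtlety to handle carefully: in Lemma \ref{lem:estimatesigma} the tiling cubes were axis-independent copies of $TQ_\nu$ translated by lattice vectors found via $\operatorname{dist}(\cdot,\Z^N)\leq\sqrt N$; here I instead need the tiling cubes to be translates of $TQ_\nu^\star$ (a $\Lambda$-cube, so its faces are orthogonal to the $\nu_i$), and I need the translations to be lattice vectors \emph{that also respect the $\nu_i$-periodicity} — this is why I invoke the period $P$ from Proposition \ref{prop:periodW} rather than the coordinate period $1$. Concretely: I partition the big cube $SQ_\nu^\star$ into congruent $\Lambda$-subcubes of side comparable to $T$, then within each I place a translate of $T'Q_\nu^\star$ (slightly smaller, to leave room for the interpolation collar) centered at a point of the lattice $P\Z\nu_1 + \dots + P\Z\nu_{N-1}$ (translated appropriately), which is possible up to an error of order $P$; since the centers lie in $\nu^\perp$-translates by lattice vectors, the potential $W$ evaluated on the copied profile equals the original, so $I_1$ is estimated exactly as before. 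The interpolation collars contribute an $I_2$-type term with cutoff gradient $\sim m$ and volume $\sim T'^{N-1}((1+1/(T'm))^{N-1}-1)$, and the outermost collar of $SQ_\nu^\star$ contributes the $I_3$-type boundary term, both of which are $O(R(m,S,T))$.

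\textbf{Main obstacle.} The genuine difficulty is geometric bookkeeping: making sure the good $\Lambda$-cubes $T'Q_\nu^\star$ tile (a slightly shrunk copy of) $SQ_\nu^\star$ with centers at $P$-lattice points of $\operatorname{Span}(\nu_1,\dots,\nu_{N-1})$, with total leftover volume of lower order, while simultaneously keeping $u_S$ in $H^1$ and satisfying the prescribed boundary datum $\rho_1 \ast u_{0,\nu}$ on $\partial(SQ_\nu^\star)$. Everything else — the energy estimates, the role of (H4) for the $L^q$ bounds on the competitor, the vanishing of $R(m,S,T)$ — is a direct transcription of the argument already carried out in Lemma \ref{lem:estimatesigma}, with $\mathcal{Q}_\nu$ replaced throughout by $\mathcal{Q}_\nu^\Lambda$ and coordinate lattice vectors replaced by $P$-multiples of the $\nu_i$.
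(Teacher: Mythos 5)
Your easy inequality ($g^\Lambda\geq g$, hence $\liminf_T g^\Lambda(\nu,T)\geq\sigma(\nu)$) is fine and matches the paper. The gap is in the reverse inequality, and it sits exactly at the point your construction is supposed to resolve: the near-optimal pair for $g(\nu,T)$ consists of a cube $Q_\nu\in\mathcal{Q}_\nu$ of \emph{arbitrary} orientation about the $\nu$-axis together with a function $u_T$ on $TQ_\nu$, while your tiles are translates of the $\Lambda$-cube $T'Q_\nu^\star$. You never say what function you put in each tile, and neither option works: if it is the restriction of $u_T$ to an inscribed $\Lambda$-cube, it does not satisfy the lateral boundary data needed to glue copies, and its energy normalized by the tile's cross-section $(T')^{N-1}$ is not bounded by $g(\nu,T)+o(1)$ (the optimal profile may concentrate its energy anywhere, and $(T/T')^{N-1}$ is a fixed constant $>1$), so you lose a dimensional constant rather than obtaining $\sigma(\nu)$; if instead it is a rotated copy of $u_T$, its energy is not equal to that of $u_T$, because $W(\cdot,p)$ is only $\Z^N$-periodic, not rotation invariant -- and estimating precisely that rotation error is the missing idea. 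The $P\nu_i$-lattice translations you emphasize as "the key new point" do guarantee that all copies have \emph{equal} energy, but they do nothing to relate that common energy to $\sigma(\nu)$; moreover, if you were allowed to assume the near-optimal competitor already lives on a $\Lambda$-cube (e.g.\ via the fixed-cube reduction of Remark \ref{rem:sigma}, which the paper states but does not prove and deliberately avoids), the lemma would follow at once by squeezing, with no tiling at all. So as written the replication step solves the wrong difficulty.

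The paper's proof is different and much shorter: take $Q_n\in\mathcal{Q}_\nu$ and $u_n$ almost optimal for $\sigma(\nu)$ along $T_n\to\infty$, use Lemma \ref{lem:denserotations} to find a rational rotation fixing $\nu$ and arbitrarily close to the one carrying the standard cube onto $Q_n$, transport $u_n$ by the (near-identity) correction rotation onto the resulting $\Lambda$-cube, and control the change of $\int W(y,\cdot)$ by the Scorza--Dragoni/Tietze continuity argument of Step 1 of Proposition \ref{prop:sigma}; this gives $\limsup_n g^\Lambda(\nu,T_n)\leq\sigma(\nu)$ with an error $1/n$. A tiling proof in the spirit of Lemma \ref{lem:estimatesigma} can in fact be made to work, but only if the tiles are translates of $TQ_\nu$ \emph{in the competitor's own orientation}, shifted by ordinary $\Z^N$ vectors, with only the outer cube $SQ_\nu^\star$ being a $\Lambda$-cube (its orientation never enters the estimates); that yields $g^\Lambda(\nu,S)\leq g(\nu,T)+R(m,S,T)$ and would be a legitimate, if heavier, alternative -- but it is not what your proposal describes, and the $P\nu_i$-periodicity is simply not needed for this lemma (it is used later, in the recovery-sequence construction of Proposition \ref{prop:limsup}).
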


\begin{proof}
Fix $\nu\in\Lambda$. From the definition of $\sigma(\nu)$ it follows that
\begin{equation}\label{eq:estbelow}
\sigma(\nu)\leq\liminf_{T\to\infty} g^\Lambda(\nu,T)\,.
\end{equation}
Let $\{T_n\}_{n\in\N}$ with $T_n\to\infty$ as $n\to\infty$. By Lemma \ref{lem:estimatesigma}, let $\{Q_n\}_{n\in\N}\subset\mathcal{Q}_\nu$ and $\{u_n\}_{n\in\N}$ with $u_n\in\mathcal{C}(Q_n, T_n)\cap L^\infty(T_nQ_n;\R^d)$ be such that
\begin{equation}\label{eq:withsigmanu}
\lim_{n\to\infty} \frac{1}{T_n^{N-1}}\f_1(u_n,T_nQ_n)=\sigma(\nu).
\end{equation}
For every fixed $T_n$, an argument similar to the one used in Step 1 of the proof of Proposition \ref{prop:sigma} together with Lemma \ref{lem:denserotations} ensure that it is possible to find rotations $\mathcal{R}_n:\R^N\to\R^N$ with $\mathcal{R}_n(e_N)=\nu$ and $\mathcal{R}_n(e_i)\in\Lambda$ for all $i=1,\dots,N-1$ such that
\begin{equation}\label{eq:estrotcubes}
| \f_1(u_n,T_nQ_n)-\f_1(\widetilde{u}_n,T_n \mathcal{R}_n(Q_n))|<\frac{1}{n},
\end{equation}
where $\widetilde{u}_n(x):=u_n(\mathcal{R}_n^{-1}x)$.
Thus
\begin{align}\label{eq:estabove}
\limsup_{n\to\infty} g^\Lambda(\nu, T) &\leq \limsup_{n\to\infty}\frac{1}{T^{N-1}_n}
    \f_1(\widetilde{u}_n,T_n \mathcal{R}_n(Q_n)) \nonumber \\
&\leq \limsup_{n\to\infty}\frac{1}{T^{N-1}_n}\f_1(u_n,T_n Q_n) \nonumber \\
&=\sigma(\nu),
\end{align}
where the last step follows from \eqref{eq:withsigmanu}, while in the second to last step we used \eqref{eq:estrotcubes}.
By \eqref{eq:estbelow} and \eqref{eq:estabove} and the arbitrariness of the sequence $\{T_n\}_{n\in\N}$, we conclude \eqref{eq:sigmaandLambda}.
\end{proof}

\begin{lemma}\label{lem:sigmaQ}
For $\nu\in\S^{N-1}$ and $Q\in\mathcal{Q}_\nu$ define
\[
\sigma^{Q}(\nu):=\lim_{T\to\infty} g^{Q}(\nu,T)\,,
\]
where
\[
g^{Q}(\nu,T):=\frac{1}{T^{N-1}}\inf\Bigl\{\, \int_{T Q}\left[ W(y,u(y))+|\nabla  u|^2  \right]\dd y \,:\,
    u\in \mathcal{C}(Q, T)   \,\Bigr\}\,.
\]
Then there exists $\{Q_n\}_{n\in\N}\subset\mathcal{Q}_\nu$ such that $\sigma^{Q_n}(\nu)\to \sigma(\nu)$ as $n\to\infty$.
In particular, if $\nu\in\Lambda$ it is possible to take $\{Q_n\}_{n\in\N}\subset \mathcal{Q}_\nu^\Lambda$.
\end{lemma}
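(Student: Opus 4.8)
The plan is to obtain $\{Q_n\}$ by selecting, along a sequence of scales $T_n\to\infty$, cubes that are nearly optimal for the \emph{full} cell problem $g(\nu,\cdot)$, and then to re-run the tiling construction from the proof of Lemma~\ref{lem:estimatesigma} with the cube held fixed. The key point is that in that proof the large cube being tiled already has the same orientation as the cube carrying the near-optimal profile, so the estimate~\eqref{eq:estSTsigma} is in fact a statement about $g^{Q}(\nu,\cdot)$ for a single fixed cube $Q$. I would first record the trivial lower bound: for every $Q\in\mathcal{Q}_\nu$ and $T>0$ the cube $Q$ is an admissible competitor in the infimum defining $g(\nu,T)$, so $g(\nu,T)\le g^{Q}(\nu,T)$; letting $T\to\infty$ (the limit on the right exists by Remark~\ref{rem:exlimfixcube}) gives $\sigma(\nu)\le\sigma^{Q}(\nu)$. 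Hence it suffices to produce $\{Q_n\}\subset\mathcal{Q}_\nu$ (resp.\ $\subset\mathcal{Q}_\nu^\Lambda$ when $\nu\in\Lambda$) with $\limsup_{n\to\infty}\sigma^{Q_n}(\nu)\le\sigma(\nu)$.

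Next I would fix $T_n\to\infty$ and choose $Q_n\in\mathcal{Q}_\nu$ and $u_n\in\mathcal{C}(Q_n,T_n)$ with $T_n^{-(N-1)}\f_1(u_n,T_nQ_n)\le g(\nu,T_n)+T_n^{-1}$. For a fixed integer $m\ge2$ and $S>T_n+3+\sqrt N$ with $T_n>m$, I would build the competitor $u_S$ on $SQ_n$ exactly as in the proof of Lemma~\ref{lem:estimatesigma}: tile $(S-T_n^{-1})Q_n$ by prisms elongated along $\nu$, place an integer translate of $u_n$ in each, fill the complement with $\rho\ast u_{0,\nu}$, and interpolate near the small cubes using the cut-offs $\varphi_{m,i}$. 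Since the translation vectors $p_i$ lie in $\nu^\perp$, one gets $u_S=\rho\ast u_{0,\nu}$ on $\partial(SQ_n)$, so $u_S\in\mathcal{C}(Q_n,S)$ is admissible for $g^{Q_n}(\nu,S)$, and the bounds~\eqref{eq:I1}, \eqref{eq:I2}, \eqref{eq:I3} give
\[
g^{Q_n}(\nu,S)\le \frac{M_{T_n,S}\,T_n^{N-1}}{S^{N-1}}\Bigl(g(\nu,T_n)+\tfrac{1}{T_n}\Bigr)+R(m,S,T_n).
\]
Sending $S\to\infty$ and using $\limsup_{S\to\infty}\frac{M_{T_n,S}T_n^{N-1}}{S^{N-1}}\le1$ then yields $\sigma^{Q_n}(\nu)\le g(\nu,T_n)+T_n^{-1}+\lim_{S\to\infty}R(m,S,T_n)$.

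Finally, letting $n\to\infty$ with $g(\nu,T_n)\to\sigma(\nu)$, I would obtain $\limsup_{n\to\infty}\sigma^{Q_n}(\nu)\le\sigma(\nu)+\lim_{T\to\infty}\lim_{S\to\infty}R(m,S,T)$; since $m\ge2$ is arbitrary and the left-hand side does not depend on $m$, \eqref{eq:limlimlim} and \eqref{eq:Rmst} force $\limsup_{n\to\infty}\sigma^{Q_n}(\nu)\le\sigma(\nu)$, which together with the lower bound gives $\sigma^{Q_n}(\nu)\to\sigma(\nu)$. For $\nu\in\Lambda$, I would instead choose $(Q_n,u_n)$ to be nearly optimal for $g^\Lambda(\nu,T_n)$ — legitimate by Lemma~\ref{lem:goodcubes} — so that $Q_n\in\mathcal{Q}_\nu^\Lambda$; the tiling preserves the orientation of $Q_n$, so the rest goes through verbatim. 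I expect the only delicate point to be the bookkeeping check that the competitor built in the proof of Lemma~\ref{lem:estimatesigma} is admissible for the \emph{fixed}-cube functional $g^{Q_n}(\nu,S)$ rather than merely for $g(\nu,S)$; because that proof already tiles a cube of the same orientation as $Q_n$, this needs no genuinely new idea.
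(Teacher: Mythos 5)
Your proposal is correct and follows essentially the same route as the paper: the trivial bound $\sigma(\nu)\le\sigma^{Q}(\nu)$, then near-optimal cubes $Q_n$ for $g(\nu,T_n)$ (via Lemma \ref{lem:goodcubes} and $g^{\Lambda}$ when $\nu\in\Lambda$), combined with the observation that the tiling construction of Lemma \ref{lem:estimatesigma} keeps the orientation of the cube fixed and hence yields the fixed-cube estimate $g^{Q_n}(\nu,S)\le g^{Q_n}(\nu,T_n)+R(m,S,T_n)$, from which the conclusion follows by letting $S\to\infty$, $n\to\infty$ and then $m\to\infty$. The only difference is bookkeeping: the paper chooses $m_n$ depending on $T_n$, whereas you keep $m$ fixed and use that the left-hand side is independent of $m$, which is equivalent.
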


\begin{proof}
First of all notice that, in view of Remark \ref{rem:exlimfixcube}, $\sigma^{Q}(\nu)$ is well defined.
By definition, we have $\sigma(\nu)\leq \sigma^{Q}(\nu)$ for all $Q\in\mathcal{Q}_\nu$.
Thus, it suffices to prove that it is possible to find a sequence $\{Q_n\}_{n\in\N}\subset\mathcal{Q}_\nu$ such that
$\sigma^{Q_n}(\nu)\leq\sigma(\nu)+R_n$, where $R_n\to0$ as $n\to\infty$.
Let $\{T_n\}_{n\in\N}$ be an increasing sequence with $T_n\to\infty$ as $n\to\infty$ such that
\[
g(\nu,T_n)\leq \sigma(\nu)+\frac{1}{n}.
\]
It is then possible to find $\{Q_n\}_{n\in\N}\subset\mathcal{Q}_\nu$ (or, using Lemma \ref{lem:goodcubes}, $\{Q_n\}_{n\in\N}\subset\mathcal{Q}^\Lambda_\nu$ in case $\nu\in\Lambda$) such that for all $n\in\N$ it holds
\begin{equation}\label{eq:estsigmaQ1}
g^{Q_n}(\nu,T_n)\leq g(\nu,T_n)+\frac{1}{n}.
\end{equation}
An argument similar to the one used in Lemma \ref{lem:estimatesigma} to establish \eqref{eq:estSTsigma} shows that
for every $\nu\in\S^{N-1}$, $Q\in\mathcal{Q}_\nu$, $T>0$, $S>T+3+\sqrt{N}$ and $m\in\{1,\dots,T\}$, it holds
\begin{equation}\label{eq:estsigmaQ2}
g^{Q}(\nu,S)\leq g^{Q}(\nu,T)+R(m,S,T), 
\end{equation}
where $R(m,S,T)$ is independent of $\nu\in\S^{N-1}$ and of $Q\in\mathcal{Q}_\nu$ (see \eqref{eq:Rmst}), and is such that
\[
\lim_{m\to\infty}\lim_{T\to\infty}\lim_{S\to\infty}R(m,S,T)=0.
\]
In particular, for all $n\in\N$, it is possible to choose $m_n\in\{1,\dots,T_n\}$ such that 
\begin{equation}\label{eq:estsigmaQ3}
\lim_{n\to\infty}\lim_{S\to\infty}R(m_n,S,T_n)=0.
\end{equation}
Thus, we get
\begin{equation}\label{eq:estsigmaQ4}
g^{Q_n}(\nu, S)\leq g^{Q_n}(\nu,T_n)+R(m_n,S,T_n).
\end{equation}
From \eqref{eq:estsigmaQ1} and \eqref{eq:estsigmaQ4}, sending $S\to\infty$, we get
\[
\sigma^{Q_n}(\nu)\leq \sigma(\nu)+\frac{2}{n}+\lim_{S\to\infty}R(m_n,S,T_n).
\]
Using \eqref{eq:estsigmaQ3} we conclude that
\[
\sigma(\nu)=\lim_{n\to\infty}\sigma^{Q_n}(\nu)\,.
\]
\end{proof}


\section{Compactness}

\begin{proposition}\label{prop:comp}
Let $\{u_n\}_{n\in\N}\subset \sp$ be a sequence with $\sup_{n\in\N}\f_{\e_n}(u_n)<+\infty$, where $\e_n\to0^+$.
Then there exists $u\in BV(\o;\{a,b\})$ such that, up to a subsequence (not relabeled), $u_n\to u$ in $L^1(\o;\R^d)$.
\end{proposition}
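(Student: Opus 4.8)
The plan is to bound $\f_{\e_n}$ from below by a classical Cahn--Hilliard type energy governed by an $x$-independent, continuous, coercive, two-well potential, and then to invoke (or reprove by the usual device) the equi-coercivity of that energy. Concretely, I would combine the two one-sided lower bounds supplied by the hypotheses into a single potential $\widehat W:\R^d\to[0,\infty)$,
\[
\widehat W(p):=\max\Bigl\{\,\widetilde W(p),\ \tfrac1C|p|^q-C\,\Bigr\},
\]
with $\widetilde W$, $C$, $q$ as in (H3)--(H4). By (H3) and (H4) one has $\widehat W(p)\le W(x,p)$ for a.e.\ $x\in Q$ and every $p\in\R^d$; $\widehat W$ is continuous; $\widehat W(p)\ge\tfrac1C|p|^q-C$, so it is coercive; and since $0=W(x,a)\ge\tfrac1C|a|^q-C$ (hence $|a|^q\le C^2$, and likewise for $b$) one gets $\widehat W(p)=0$ iff $p\in\{a,b\}$. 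Putting $\widehat{\f}_\e(u):=\int_\o\bigl[\tfrac1\e\widehat W(u)+\e|\nabla u|^2\bigr]\dx$, the assumption gives $\widehat{\f}_{\e_n}(u_n)\le\f_{\e_n}(u_n)\le\Lambda:=\sup_n\f_{\e_n}(u_n)<\infty$. Since $\widehat{\f}_\e$ is exactly the vectorial two-well gradient-theory functional of \cite{fonseca89} (see also \cite{baldo}), its compactness theorem yields, up to a subsequence, $u_n\to u$ in $L^1(\o;\R^d)$ with $u\in BV(\o;\{a,b\})$, which is the claim.

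For completeness I would also recall the underlying argument, since it is short. From the lower bound in (H4), $\tfrac1{C\e_n}\|u_n\|_{L^q(\o;\R^d)}^q-\tfrac{C|\o|}{\e_n}\le\tfrac1{\e_n}\int_\o W(\tfrac{x}{\e_n},u_n)\dx\le\Lambda$, so $\{u_n\}$ is bounded in $L^q(\o;\R^d)$. Let $\phi(p):=\inf\bigl\{\int_0^1 2\sqrt{\widehat W(\gamma)}\,|\gamma'|\dd t:\gamma\in C^1([0,1];\R^d),\ \gamma(0)=a,\ \gamma(1)=p\bigr\}$ be the geodesic function of the degenerate metric $2\sqrt{\widehat W}$; it is locally Lipschitz with $|\nabla\phi|\le 2\sqrt{\widehat W}$ a.e., and the growth of $\widehat W$ gives $\phi(p)\le C(1+|p|^{1+q/2})$, so, as $q\ge2$ forces $1+q/2\le q$, the sequence $\{\phi\circ u_n\}$ is bounded in $L^1(\o)$. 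After the routine truncation (work with $\phi\wedge M$ and let $M\to\infty$), the chain rule and Young's inequality give
\[
|D(\phi\circ u_n)|(\o)\le\int_\o 2\sqrt{\widehat W(u_n)}\,|\nabla u_n|\dx\le\widehat{\f}_{\e_n}(u_n)\le\Lambda,
\]
so $\{\phi\circ u_n\}$ is bounded in $BV(\o)$ and, by compactness of the embedding $BV(\o)\hookrightarrow L^1(\o)$, converges (up to a subsequence) to some $v$ in $L^1(\o)$ and a.e. Now $\int_\o\widehat W(u_n)\le\Lambda\e_n\to0$, and since the $L^q$-bound makes $|\{|u_n|>R\}|$ uniformly small while $\widehat W$ is continuous, positive off $\{a,b\}$ and coercive, one obtains $|\{x:\ |u_n(x)-a|\ge\delta\}\cap\{x:\ |u_n(x)-b|\ge\delta\}|\to0$ for every $\delta>0$. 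Because $\phi(a)=0<\phi(b)=:\kappa$, this pins $v$ to the two values $0$ and $\kappa$; setting $A:=\{v=0\}$ we have $\chi_{\o\setminus A}=v/\kappa\in BV(\o)$, so $A$ has finite perimeter, and a further short argument (using once more the convergence to $\{a,b\}$ in measure) upgrades $\phi\circ u_n\to v$ to $u_n\to u:=a\chi_A+b\chi_{\o\setminus A}$ in $L^1(\o;\R^d)$, with $u\in BV(\o;\{a,b\})$.

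The genuine content is the reduction through $\widehat W$: (H3) supplies the correct degenerate metric for the Modica--Mortola device, and (H4) supplies the coercivity that keeps the mass of $u_n$ under control. I expect the only real obstacle to be the last step, namely recovering the convergence of $u_n$ itself from that of $\phi\circ u_n$: since $\phi$ need not be injective (many points may share the value $\kappa$), this does not come for free and must be extracted from the ``convergence to the wells in measure'' estimate, which is exactly where the growth hypothesis (H4) is needed (it also ensures $\phi\circ u_n\in L^1$).
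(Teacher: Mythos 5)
Your proposal is correct and takes essentially the same route as the paper: the paper likewise builds an $x$-independent, continuous, coercive two-well comparison potential from (H3) and (H4) (gluing $\widetilde{W}$ and $\tfrac1C|p|^q-C$ with a smooth cutoff rather than taking their pointwise maximum), bounds $\f_{\e_n}$ from below by the corresponding Cahn--Hilliard energy, and then concludes by the compactness argument of \cite{fonseca89}, which is exactly the classical argument you sketch. No gaps.
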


\begin{proof}
Let $\widetilde{W}:\R^d\to[0,\infty)$ be the continuous function given by (H3).
Let $R>0$ be such that $\frac{1}{C}|p|^q-C>0$ for $|p|>R$, where $C>0$ and $q\geq2$ are as in (H4), and $|a|, |b|<R$.
Take a function $\varphi\in C^\infty(\R^d)$ such that $\varphi\equiv1$ in $B_R(0)$ and $\varphi\equiv0$ in $B_{2R}(0)$.
Define the function $\mathcal{W}:\R^d\to[0,\infty)$ by
\[
\mathcal{W}(p):=\varphi(p)\widetilde{W}(p)+(1-\varphi(p))\left( \frac{1}{C}|p|^q-C \right),
\]
for $p\in\R^d$. Notice that $\mathcal{W}(p)=0$ if and only if $p\in\{a,b\}$.
Since $\widetilde{W}(p)\leq W(x,p)$ for a.e. $x\in Q$, we get
\[
\f_{\e_n}(u_n)\geq \int_\o \left[ \, \frac{1}{\e} \mathcal{W}(u_n(x)) + \e |\nabla u_n(x)|^2  \,\right] \dx =: \widetilde{\f}_{\e_n}(u_n)\,,
\]
and, in turn, we have that $\sup_{n\in\N}\widetilde{\f}_{\e_n}(u_n)<+\infty$. We now proceed as in \cite{fonseca89} to obtain a subsequence of $\{u_n\}_{n\in\N}$ and $u\in BV(\o;\{a,b\})$ such that $u_n\to u$ in $L^1(\o;\R^d)$.
\end{proof}


\section{Liminf inequality}

This section is devoted to the proof of the liminf inequality.

\begin{proposition}\label{prop:liminf}
Given a sequence $\{\e_n\}_{n\in\N}$ with $\e_n\to0^+$, let $\{u_n\}_{n\in\N}\subset \sp$ be such that $u_n\to u$ in $L^1(\o;\R^d)$. Then
\[
\f_0(u)\leq\liminf_{n\to\infty}\f_{\e_n}(u_n)\,.
\]
\end{proposition}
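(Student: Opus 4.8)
The strategy is the standard blow-up method of Fonseca and M\"uller. Assume without loss of generality that $\liminf_n \f_{\e_n}(u_n) = \lim_n \f_{\e_n}(u_n) < +\infty$ (otherwise there is nothing to prove), so that by Proposition~\ref{prop:comp} we may suppose $u \in BV(\o;\{a,b\})$ and, passing to a further subsequence, that $u_n \to u$ pointwise a.e.\ in $\o$. Set $A := \{u = a\}$. Introduce the sequence of nonnegative Radon measures
\[
\mu_n := \left[\, \frac{1}{\e_n} W\!\left(\tfrac{x}{\e_n}, u_n(x)\right) + \e_n |\nabla u_n(x)|^2 \,\right] \l \restr \o\,,
\]
which by hypothesis has uniformly bounded total mass. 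By weak-$*$ compactness, up to a subsequence $\mu_n \wtom \mu$ for some $\mu \in \mathcal{M}(\overline{\o})$. It suffices to show that $\mu \geq \sigma(\nu_A(x))\, \hno \restr \partial^* A$, since then $\liminf_n \f_{\e_n}(u_n) \geq \mu(\o) \geq \int_{\partial^* A} \sigma(\nu_A)\, \dd \hno = \f_0(u)$.

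By the Besicovitch differentiation theorem, it is enough to prove that for $\hno$-a.e.\ $x_0 \in \partial^* A$,
\[
\frac{\dd \mu}{\dd (\hno \restr \partial^* A)}(x_0) = \lim_{r \to 0^+} \frac{\mu(Q_\nu(x_0, r))}{r^{N-1}} \geq \sigma(\nu_A(x_0))\,,
\]
where $\nu := \nu_A(x_0)$ and $Q_\nu(x_0,r)$ is a cube of side $r$ centered at $x_0$ with two faces orthogonal to $\nu$. I would fix such a point $x_0$ — a point of density of $\partial^* A$ where De~Giorgi's blow-up (Theorem~\ref{thm:DeGiorgi}(i)) applies, so that $\frac{A - x_0}{r} \to \{y : y\cdot\nu \leq 0\}$ in $L^1_{\mathrm{loc}}$ — and, after choosing radii $r$ at which $\mu(\partial Q_\nu(x_0,r)) = 0$, perform the rescaling $y = (x - x_0)/r$. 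Writing $v_{n,r}(y) := u_n(x_0 + ry)$, a change of variables gives
\[
\frac{\mu_n(Q_\nu(x_0,r))}{r^{N-1}} = \int_{Q_\nu} \left[\, \frac{r}{\e_n} W\!\left(\tfrac{x_0 + ry}{\e_n}, v_{n,r}(y)\right) + \frac{\e_n}{r} |\nabla v_{n,r}(y)|^2 \,\right] \dy\,.
\]
Setting $\delta_{n} := \e_n / r$ (a double sequence), this is exactly $\f_{\delta_n}$-type energy on the fixed cube $Q_\nu$, but with the potential $W\big(\tfrac{x_0}{\e_n} + \tfrac{y}{\delta_n}, \cdot\big)$ — a translate of $W$ in the fast variable, which by $Q$-periodicity (H0) is harmless.

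The core of the argument is then a diagonalization: choose $r = r_j \to 0$ slowly, extract $n = n(j)$ so that $v_{n(j), r_j} \to u_{0,\nu}$ in $L^1(Q_\nu)$ (using pointwise a.e.\ convergence of $u_n$ together with the blow-up of $A$), and so that $\delta_{n(j)} \to 0$. On each cube I would invoke Lemma~\ref{lem:DeGslicing} (in the form of Remark~\ref{rem:DG}, with $D = Q_\nu$ a fixed cube and $D_k = D$) to modify $v_{n(j), r_j}$ near $\partial Q_\nu$ so that it agrees with the mollified profile $\widetilde{u}_{\rho, 1/\delta_{n(j)}, \nu}$ in a boundary layer, without increasing the liminf of the energy — here one must track the translation in the fast variable, which does not affect the slicing estimate since all bounds used only the growth and periodicity of $W$. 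Rescaling the modified competitor by $y \mapsto y/\delta_{n(j)}$ turns it into an admissible function for the cell problem defining $g(\nu, 1/\delta_{n(j)})$ (using Remark~\ref{rem:sigma}, the infimum over a single fixed cube $Q_\nu$ suffices), yielding
\[
\liminf_{j \to \infty} \frac{\mu_{n(j)}(Q_\nu(x_0, r_j))}{r_j^{N-1}} \geq \liminf_{j \to \infty} \delta_{n(j)}^{N-1}\, g\!\left(\nu, \tfrac{1}{\delta_{n(j)}}\right) \cdot \delta_{n(j)}^{-(N-1)} = \liminf_{j\to\infty} g\!\left(\nu, \tfrac{1}{\delta_{n(j)}}\right) = \sigma(\nu)\,,
\]
the last equality by Definition~\ref{def:sigma} and Lemma~\ref{lem:estimatesigma}. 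Combining with $\limsup_j \mu_{n(j)}(Q_\nu(x_0,r_j)) \leq \mu(\overline{Q_\nu(x_0,r_j)})$ for the chosen good radii and taking $r_j \to 0$ completes the estimate.

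\textbf{Main obstacle.} The delicate point is the simultaneous management of two limits — the blow-up radius $r \to 0$ and $\e_n \to 0$ — through the ratio $\delta_n = \e_n/r$; one must choose the diagonal sequence so that $v_{n(j),r_j}$ converges to the clean interface profile $u_{0,\nu}$ \emph{and} $\delta_{n(j)} \to 0$, while also respecting the ``good radius'' condition $\mu(\partial Q_\nu(x_0,r)) = 0$ and keeping the rescaled energies bounded. A subtlety specific to this homogenization setting, absent in the classical Modica--Mortola proof, is that after rescaling the fast variable reads $\tfrac{x_0}{\e_n} + \tfrac{y}{\delta_n}$ rather than $\tfrac{y}{\delta_n}$; one resolves this by noting that $\tfrac{x_0}{\e_n}$ is merely a translation and, since $W(\cdot, p)$ is $Q$-periodic, the cell-problem infimum $g(\nu, T)$ is translation-invariant in the fast variable, so the extra shift can be absorbed (after an arbitrarily small further translation to an integer lattice point, exactly as in the proof of Lemma~\ref{lem:estimatesigma}). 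The rest is the routine verification that the slicing modification of Lemma~\ref{lem:DeGslicing} transfers verbatim to functions on $Q_\nu$ with the shifted potential.
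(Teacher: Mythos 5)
Your proposal is correct and follows essentially the same route as the paper's proof: the Fonseca--M\"uller blow-up with the measures $\lambda_n$, Besicovitch differentiation at $\hno$-a.e.\ $x_0\in\partial^* A$, a diagonalization in $(r,n)$ with $\eta=\e_n/r\to0$, the De Giorgi slicing Lemma \ref{lem:DeGslicing} to impose the boundary datum $\widetilde{u}_{\rho,1/\eta,\nu}$, and rescaling to obtain a competitor for the cell problem, with the shift $x_0/\e_n$ absorbed by writing it as an integer lattice point plus a bounded remainder and using periodicity (the paper implements your ``small further translation'' by moving the domain to $Q_\nu-x_k$ with $x_k=\eta_k s_{n_k}\to0$ and working on the largest inscribed centered cube, which is the same device). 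No genuine gap; the differences are only presentational.
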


\begin{proof}
Let $\{u_n\}_{n\in\N}\subset H^1(\o;\R^d)$ with $u_n\to u$ in $L^1(\o;\R^d)$.
Without loss of generality, and possibly up to a subsequence, we can assume that
\begin{equation}\label{eq:boundfun}
\liminf_{n\to\infty}\f_{\e_n}(u_n)=\lim_{n\to\infty}\f_{\e_n}(u_n)<\infty\,.
\end{equation}
By Proposition \ref{prop:comp}, we get $u\in BV(\o;\{a,b\})$. Set $A:=\{u=a\}$.
Consider, for every $n\in\N$, the finite nonnegative Radon measure
\[
\lambda_n:=\left[\, \frac{1}{\e} W\left(\frac{x}{\e},u_n(x)\right) + \e |\nabla  u_n(x)|^2\,\right]\mathcal{L}^N\restr\o\,.
\]
From \eqref{eq:boundfun} we have that $\sup_{n\in\N}\lambda_n(\o)<\infty$. Thus, up to a subsequence (not relabeled), $\lambda_n\stackrel{w^*}{\rightharpoonup}\lambda$, for some finite nonnegative Radon measure $\lambda$ in $\o$. In particular,
\begin{equation}\label{eq:ineq1}
\liminf_{n\to\infty} \f_{\e_n}(u_n)= \liminf_{n\to\infty} \lambda_n(\o)\geq \lambda(\o)\,.
\end{equation}
We claim that for $\hno$-a.e. $x_0\in \partial^* A$ it holds
\begin{equation}\label{eq:ineq2}
\frac{\dd \lambda}{\dd\mu}(x_0)\geq\sigma(\nu_A(x_0))\,,
\end{equation}
where $\mu:=\hno\restr\partial^* A$.
The liminf inequality follows from \eqref{eq:ineq1} and \eqref{eq:ineq2}.
The rest of the proof is devoted at showing the validity of \eqref{eq:ineq2}.\\

\emph{Step 1.}  For $\hno$-a.e. $x\in\partial^* A$ we have
\begin{equation}\label{eq:finitedensity}
\frac{\dd \lambda}{\dd\mu}(x)<\infty\,.
\end{equation}
Fix $x_0\in\partial^* A$ satisfying \eqref{eq:finitedensity} and a cube $Q_\nu\in\mathcal{Q}_{\nu}$, with $\nu:=\nu_A(x_0)$.
Let $\{\delta_k\}_{k\in\N}$ be a sequence with $\delta_k\to0$ as $k\to\infty$, such that $\lambda(\partial Q_\nu(x_0,\delta_k))=0$,
where $Q_\nu(x_0,\delta_k):=x_0+\delta_k Q_\nu$ for all $k\in\N$. Then it holds
\begin{equation}\label{eq:limdensity}
\frac{\dd \lambda}{\dd\mu}(x_0)
=\lim_{k\to\infty}\frac{\lambda (Q_\nu(x_0,\delta_k))}{\delta_k^{N-1}}
=\lim_{k\to\infty}\lim_{n\to\infty} \frac{\lambda_n (Q_\nu(x_0,\delta_k))}{\delta_k^{N-1}}\,.
\end{equation}
We have
\begin{align}\label{eq:firstpartcompdensity}
&\frac{\lambda_n (Q_{\nu}(x_0,\delta_k))}{\delta_k^{N-1}}=
    \frac{1}{\delta_k^{N-1}}\int_{Q_{\nu}(x_0,\delta_k)} \left[ \, \frac{1}{\e_n} W\left(\frac{x}{\e_n},u_n(x)\right)
        + \e_n |\nabla  u_n(x)|^2  \,\right] \dx  \nonumber\\
&\hspace{0.6cm}=\delta_k \int_{Q_{\nu}} \left[ \, \frac{1}{\e_n}
    W\left(\frac{x_0+\delta_k z}{\e_n},u_n(x_0+\delta_k z)\right)+ \e_n |\nabla  u_n(x_0+\delta_k z)|^2  \,\right] \dd z \nonumber\\
&\hspace{0.6cm}=\int_{Q_{\nu}-\frac{\e_n}{\delta_k}s_n} \left[ \, \frac{\delta_k}{\e_n}
    W\left(\frac{\delta_k}{\e_n}y, u_{n}(y^n_k) \right)+ \e_n |\nabla  u_n(y^n_k)|^2  \,\right] \dd y\,,
\end{align}
where in the last step, for the sake of simplicity, we set $y^n_k:=x_0+\delta_k y+ \e_n s_n$, we wrote $\frac{x_0}{\e_n}=m_n-s_n$, with $m_n\in\Z^N$ and $|s_n|\leq\sqrt{N}$, and we used the periodicity of $W$ to simplify, for $z=y+\frac{\e_n}{\delta_k}s_n$, $z\in Q_\nu$,
\[
W\left(\frac{x_0+\delta_k z}{\e_n},\cdot\right)
=W\left(\frac{x_0+\delta_k (y+\frac{\e_n}{\delta_k}s_n)}{\e_n}, \cdot\right)
=W\left(m_n+\frac{\delta_k}{\e_n}y,\cdot\right)
=W\left(\frac{\delta_k}{\e_n}y,\cdot\right)\,.
\]
Consider the functions $u_{k,n}(x):=u_n(x_0+\delta_k x)$, for $n,k\in\N$.
We claim that
\begin{equation}\label{eq:convlp}
\lim_{k\to\infty}\lim_{n\to\infty}\|u_{k,n} -u_{0,\nu_A(x_0)}\|_{L^1(Q_\nu;\R^d)}=0\,,
\end{equation}
where $u_{0,\nu_A(x_0)}$ is defined as in \eqref{eq:u0}.
Set $Q^+_\nu:=Q_\nu\cap\{ x\in\R^N\,:\, x\cdot\nu>0 \}$ and $Q^-_\nu$ its complement in $Q_\nu$. We get
\begin{align*}
\lim_{k\to\infty}&\lim_{n\to\infty}\|u_{k,n} -u_{0,\nu_A(x_0)}\|_{L^1(Q_\nu;\R^d)}\\
&\hspace{0.6cm}=\lim_{k\to\infty}\lim_{n\to\infty} \left[\, \int_{Q^-_\nu} |u_n(x_0+\delta_k x)-a|\dx
    +\int_{Q^+_\nu} |u_n(x_0+\delta_k x)-b|\dx \,\right]\\
&\hspace{0.6cm}=\lim_{k\to\infty} \left[\, \int_{Q^-_\nu} |u(x_0+\delta_k x)-a|\dx
    +\int_{Q^+_\nu} |u(x_0+\delta_k x)-b|\dx \,\right]\\
&\hspace{0.6cm}=\lim_{k\to\infty}\frac{1}{\delta_k^N}\left[\, \int_{Q_\nu(x_0,\delta_k)\cap H^-_\nu} |u(y)-a|\dy
      +\int_{Q_\nu(x_0,\delta_k)\cap H^+_\nu} |u(y)-b| \dy\,\right]\\
&\hspace{0.6cm}=|b-a| \lim_{k\to\infty}\left[\, \frac{|Q_\nu(x_0,\delta_k)\cap H^-_\nu\cap  B|}{\delta_k^N} 
    + \frac{|Q_\nu(x_0,\delta_k)\cap H^+_\nu\cap  A|}{\delta_k^N} \,\right]\\
&\hspace{0.6cm}=0\,,
\end{align*}
where  $H^+_\nu:=\{ x\in\R^N\,:\,x\cdot\nu>x_0\cdot\nu \}$, $H^-_\nu$ is its complement in $\R^N$ and $B:=\o\setminus A$. The last step follows from (i) of Theorem \ref{thm:DeGiorgi}.\\

\emph{Step 2.} Using a diagonal argument, and \eqref{eq:convlp}, it is possible to find an increasing sequence $\{n_k\}_{k\in\N}$ such that, setting
\[
\eta_k:=\frac{\e_{n_k}}{\delta_k}\,,\quad\quad\quad
x_k:= \eta_k s_{n_k}\,,\quad\quad\quad
w_k(x):=u_{k,n_k}\left(x-x_k\right),
\]
the following hold:
\begin{itemize}
\item[(i)] $\lim_{k\to\infty}\eta_k=0$;
\item[(ii)] $\lim_{k\to\infty} x_k=0$;
\item[(iii)] $w_k\to u_{0,\nu}$ in $L^q(Q_\nu;\R^d)$ for all $q\geq1$;
\item[(iv)] we have
\begin{align*}
\lim_{k\to\infty}\delta_k &\int_{Q_{\nu}-x_k} \left[ \,
    \frac{1}{\e_{n_k}} W\left(\frac{y}{\eta_k}, u_{n_k}(y^{n_k}_k) \right)+ \e_{n_k} |\nabla  u_{n_k}(y^{n_k}_k)|^2  \,\right] \dd y\\
&=\lim_{k\to\infty}\lim_{n\to\infty}\delta_k \int_{Q_{\nu}-\frac{\e_n}{\delta_k}s_n} \left[ \, \frac{1}{\e_n} 
    W\left(\frac{\delta_k}{\e_n}y,u_n(y^n_k) \right)+ \e_n |\nabla  u_n(y^n_k)|^2  \,\right] \dd y\,.
\end{align*}
\end{itemize}
From \eqref{eq:limdensity}, \eqref{eq:firstpartcompdensity} and (iv) we get
\begin{align*}
\frac{\dd \lambda}{\dd\mu}(x_0)
&= \lim_{k\to\infty}\int_{Q_{\nu}-x_k} \left[ \, \frac{1}{\eta_k}
    W\left(\frac{y}{\eta_k}, w_k(y) \right) + \eta_k |\nabla  w_k(y)|^2  \,\right] \dd y\,.
\end{align*}
Let $Q_k$ be the largest cube contained in $Q_{\nu}-x_k$ centered at zero and having the same principal axes of $Q_\nu$. Since $x_k\to0$ as $k\to\infty$, $Q_k\subset Q_\nu-x_k$ for $k$ large and the integrand is nonnegative, we have that
\begin{align}\label{eq:secondpartcompdensity}
\frac{\dd \lambda}{\dd\mu}(x_0)&\geq\limsup_{k\to\infty} \int_{Q_k} \left[ \,
    \frac{1}{\eta_k} W\left(\frac{y}{\eta_k}, w_k(y) \right)+ \eta_k |\nabla  w_k(y)|^2  \,\right] \dd y\,. \\ \nonumber
\end{align}

\emph{Step 3.} Finally we modify $w_k$ close to $\partial Q_k$ in order to render it an admissible function for the infimum problem defining $\sigma(\nu)$ as in Definition \ref{def:sigma}.
Using Lemma \ref{lem:DeGslicing} we find a sequence $\{\bar{w}_k\}_{k\in\N}\subset H^1(Q_\nu;\R^d)$ such that
\begin{equation}\label{eq:ineqvkwk}
\liminf_{k\to\infty}\f_{\eta_k}(w_k,Q_k)\geq\limsup_{k\to\infty}\f_{\eta}(\bar{w}_k, Q_\nu)\,,
\end{equation}
and with $\bar{w}_k=(\widetilde{u_k})_{1/\eta_k,\nu}$ on $\partial Q_\nu$, where $(\widetilde{u_k})_{1/\eta_k,\nu}$  is defined as in \eqref{eq:u0conv}. Hence, by \eqref{eq:secondpartcompdensity} and \eqref{eq:ineqvkwk}
\begin{align*}
\frac{\dd \lambda}{\dd\mu}(x_0)&\geq
\limsup_{k\to\infty}\int_{Q_\nu} \left[ \, \frac{1}{\eta_k} W\left(\frac{y}{\eta_k}, 
    \bar{w}_k(y) \right)+ \eta_k |\nabla  \bar{w}_k(y)|^2  \,\right] \dd y\\
&=\limsup_{k\to\infty}\int_{\frac{1}{\eta_k}Q_\nu} \left[ \, \eta_k^{N-1}
    W\left(z, \bar{w}_k(\eta_k z) \right)+ \eta_k^{N+1} |\nabla  \bar{w}_k(\eta_k z)|^2  \,\right] \dd z\\
&=\limsup_{k\to\infty}\,\eta_k^{N-1} \int_{\frac{1}{\eta_k}Q_\nu} \left[ \, 
    W\left(z, v_k(z) \right)+ |\nabla  v_k(z)|^2  \,\right] \dd z\\
&\geq \sigma(\nu)\,,
\end{align*}
since $\bar{w}_k\in \mathcal{C}(Q_\nu,\frac{1}{\eta_k})$, where $v_k(z):=\bar{w}_k(\eta_k z)$, and this concludes the proof.
\end{proof}


\section{Limsup inequality}

In this section we construct a recovery sequence.

\begin{proposition}\label{prop:limsup}
Let $u\in BV(\o; \{a,b\})$. Given a sequence $\{\e_n\}_{n\in\N}$ with $\e_n\to0^+$ as $n \to \infty$, there exist
 $\{u_n\}_{n \in \N} \subset \sp$ with $u_n \to u$ in $L^1(\o;\R^d)$ as $n \to \infty$ such that
\begin{equation}\label{eq:limsup}
\limsup_{n \to \infty} \f_{\e_n}(u_n)\leq\f_0(u)\,.
\end{equation}
\end{proposition}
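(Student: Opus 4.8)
The plan is to prove the limsup inequality in two main steps: first for the "good" class of functions whose interface normals lie in the dense set $\Lambda = \mathbb{Q}^N \cap \S^{N-1}$, and then to pass to general $u \in BV(\o;\{a,b\})$ by a density argument combined with Reshetnyak's upper semi-continuity theorem (Theorem \ref{thm:resh}) and the upper semi-continuity of $\sigma$ (Proposition \ref{prop:sigma}).

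\emph{Step 1: Recovery sequence for $\Lambda$-polyhedral sets.} Suppose first that $A := \{u=a\}$ is a $\Lambda$-polyhedral set, so that $\partial^* A$ is, up to an $\hno$-negligible set, a finite union of pieces of affine hyperplanes each orthogonal to some $\nu \in \Lambda$. Localizing, it suffices to build a recovery sequence near a flat piece of interface with a fixed normal $\nu \in \Lambda$. The idea is to tile the flat interface by rescaled copies of a near-optimal cube $T Q_\nu$ for the cell problem defining $\sigma(\nu)$, with $Q_\nu \in \mathcal{Q}_\nu^\Lambda$ as furnished by Lemma \ref{lem:sigmaQ}. Crucially, Proposition \ref{prop:periodW} guarantees that $W$ is periodic with respect to an orthonormal basis $\nu_1, \dots, \nu_{N-1}, \nu$ with integer period $T_{\nu}\nu_i$, so that translating the optimal profile $u_T$ along the interface by these lattice vectors does not change the value of $W(y, u_T(y))$; this is what allows us to build the recovery sequence as a periodic array of copies of $u_T$ without incurring extra energy. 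On each cell of side $\e_n T$ (after rescaling the cell problem by $\e_n$) we place a scaled translate of $u_T$; between cells and near the boundary of the localization region we interpolate to $\widetilde{u}_{\rho,1/\e_n,\nu}$ using cut-offs, as in Lemma \ref{lem:DeGslicing} and in the construction of $u_S$ in the proof of Lemma \ref{lem:estimatesigma}. The interpolation regions have volume $O(\e_n)$ relative to the interface and contribute energy $o(1)$ after division, so the limsup of the localized energy is bounded by $\sigma(\nu) \hno(\text{flat piece}) + \text{error}$; letting $T \to \infty$ along the sequence from Lemma \ref{lem:sigmaQ} gives exactly $\sigma(\nu)$. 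To organize the localization cleanly and avoid gluing profiles across different normal directions, I would first show that the localized $\Gamma$-$\limsup$ functional $A \mapsto \Gamma\text{-}\limsup \f_{\e_n}(\cdot, A)$ evaluated at $u$ defines (the restriction to open sets of) a finite Radon measure absolutely continuous with respect to $\hno \restr \partial^* A$, using the De Giorgi–Letta type criterion of Lemma \ref{lem:coincidence}; then the density of this measure with respect to $\hno \restr \partial^* A$ is, by the blow-up argument with cubes $Q_\nu \in \mathcal{Q}_\nu^\Lambda$, bounded above by $\sigma(\nu_A(x))$ for $\hno$-a.e. $x$.

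\emph{Step 2: Density argument for general $u$.} Given an arbitrary $u \in BV(\o;\{a,b\})$, apply Lemma \ref{lem:densitysets} with $V = \Lambda$ to find $\Lambda$-polyhedral sets $E_n$ with $\chi_{E_n} \to \chi_A$ in $L^1(\o)$ and $P(E_n;\o) \to P(A;\o)$; by Reshetnyak's theorem (Theorem \ref{thm:resh}) applied to the upper semi-continuous bounded function $\sigma$ (Proposition \ref{prop:sigma} gives u.s.c.; Lemma \ref{lem:estimatesigma} gives boundedness), we have $\limsup_n \int_{\partial^* E_n} \sigma(\nu_{E_n}) \,\dhno \leq \int_{\partial^* A} \sigma(\nu_A)\,\dhno = \f_0(u)$. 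Combined with Step 1, which provides for each $E_n$ a sequence $u_n^k \to u_n := a\chi_{E_n} + b\chi_{\o \setminus E_n}$ realizing the bound $\limsup_k \f_{\e_k}(u_n^k) \leq \int_{\partial^* E_n}\sigma(\nu_{E_n})\,\dhno$, a standard diagonalization (using lower semi-continuity / metrizability of $\Gamma$-convergence, or the lower bound from Proposition \ref{prop:liminf} to control things) extracts a single sequence $u_n \to u$ in $L^1$ with $\limsup_n \f_{\e_n}(u_n) \leq \f_0(u)$.

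\emph{Main obstacle.} The technically delicate part is Step 1: ensuring that the periodic array of translated optimal profiles glues together into an $H^1$ function whose energy is genuinely controlled by $\sigma(\nu)$. The difficulty is twofold — first, the translates along the interface must be by exact lattice vectors of $W$ (handled by Proposition \ref{prop:periodW}), but the geometric tiling of a rescaled cube's flat face by copies of $T Q_\nu \cap \nu^\perp$ leaves boundary layers where the periodic pattern must be matched to the prescribed mollified profile $\widetilde{u}_{\rho,1/\e_n,\nu}$; second, one must verify that these transition layers, together with the corners where different flat pieces of $\partial^* A$ meet, carry asymptotically negligible energy. The cut-off estimates are morally the same as those already carried out in the proof of Lemma \ref{lem:estimatesigma} for the competitor $u_S$ and in Lemma \ref{lem:DeGslicing}, so the work is in bookkeeping rather than new ideas; the genuinely new geometric input — reducing everything to directions in $\Lambda$ so that periodicity can be exploited — has already been isolated in Proposition \ref{prop:periodW} and Lemmas \ref{lem:goodcubes}–\ref{lem:sigmaQ}.
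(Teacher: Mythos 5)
Your proposal is correct and follows essentially the same route as the paper: first the $\Lambda$-polyhedral case, handled by showing the localized $\Gamma$-limit is a Radon measure absolutely continuous with respect to $\hno\restr\partial^* A$ (via Lemma \ref{lem:coincidence}) and bounding its density by $\sigma(\nu)$ through a blow-up with cubes in $\mathcal{Q}_\nu^\Lambda$, periodic tiling of near-optimal profiles from Lemma \ref{lem:sigmaQ} and the periodicity of Proposition \ref{prop:periodW}, with cut-off matching as in Lemmas \ref{lem:DeGslicing} and \ref{lem:estimatesigma}; then the general case by the density result of Lemma \ref{lem:densitysets}, Reshetnyak's theorem and the upper semi-continuity of $\sigma$, with a diagonal argument. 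This is precisely the structure of the paper's proof, up to technical details (the subsequence extraction over the countable family $\cR$, the Urysohn property, and the Riemann--Lebesgue computation) that your sketch leaves as bookkeeping.
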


\begin{proof}
Notice that it is enough to prove the following: given any sequence $\{\e_n\}_{n\in\N}$ with $\e_n\to0$ as $n\to\infty$, it is possible to extract a subsequence $\{\e_{n_k}\}_{k\in\N}$ for which there exists 
 $\{u_k\}_{k \in \N} \subset \sp$ with $u_k \to u$ in $L^1(\o;\R^d)$ as $k \to \infty$ such that
\[
\limsup_{k \to \infty} \f_{\e_{n_k}}(u_k)\leq\f_0(u)\,.
\]
Since $L^1(\o;\R^d)$ is separable, we conclude using the Urysohn property of the $\Gamma$-limit (see \cite[Proposition 8.3]{dalmaso93}).\\

\emph{Case 1.} Assume that the set $A:=\{u=a\}$ is a $\Lambda$-polyhedral set (see Definition \ref{def:poly}).
We need to localize the $\Gamma$-limit of our sequence of functionals.
For $\{\delta_n\}_{n\in\N}$ with $\delta_n\to0$, $v \in L^1(\o;\R^d)$ and $U\in\cA(\o)$ we set
\[
\W_{\{ \del_n \}}(v; U) := \inf \bigg\{ \liminf_{n \to \infty} \f_{\delta_n}(v_n,U) :
v_n \to v \textrm{ in } L^1(U; \R^d), v_n \in H^1(U; \R^d) \bigg\}\,.
\]
Let $\mathcal{C}$ be the family of all open cubes in $\o$ with faces parallel to the axes, centered at points $x \in \o \cap \Q$ and with rational edgelength. Denote by $\cR$ the countable subfamily of $\cA(\o)$ whose elements are $\o$ and all finite unions of elements of $\mathcal{C}$, \emph{i.e.},
\[
\cR := \{\o\}\, \cup \, \Bigg\{ \bigcup_{i=1}^k C_i : k \in \N, C_i \in \mathcal{C} \Bigg\}.
\]

Let $\e_n \to 0^+$. We will select a suitable subsequence in the following manner. We enumerate the elements of $\mathcal{R}$ by $\{R_i\}_{i \in \N}$. First considering $R_1$, by a diagonalization argument we can find a subsequence $\{\e_{n_j}\}_{j \in \N} \subset \{\e_n\}_{n \in N}$ and functions $\{u^{R_1}_{j}\}_{j \in N} \subset H^1(R_1; \R^d)$ such that
\[
u^{R_1}_j \to u \quad\textrm{ in }\, L^1(R_1;\R^d),
\]
and
\[
\W_{\{\e_{n_j}\}}(u; R_1) = \lim_{j \to \infty} \f_{\e_{n_j}}(u^{R_1}_j, R_1).
\]
Now, considering $R_2$, we can extract a further subsequence $\{ \e_{n_{j_k}} \}_{k \in \N}$ and find functions $\{u^{R_2}_k\}  \subset H^1(R_2; \R^d)$ such that
\[ u^{R_2}_{k} \to u \quad\textrm{ in }\, L^1(R_2;\R^d), \quad u^{R_1}_{j_k} \to u \quad\textrm{ in }\, L^1(R_1;\R^d), \]
and
\[
\W_{\left\{\e_{n_{j_k}}\right\}}(u; R_2) = \lim_{k \to \infty} \f_{\e_{n_{j_k}}}(u^{R_2}_k, R_2) ,\quad \W_{\left\{\e_{n_{j_k}}\right\}}(u; R_1) = \lim_{k \to \infty} \f_{\e_{n_{j_k}}}(u^{R_1}_{j_k}, R_1).
\]
Continuing along the $\{R_i\}$ in this fashion and employing a further diagonalization argument, we can assert the existence of a subsequence $\{\e_n^{\cR}\}_{n\in\N}$ of $\{\e_n\}_{n\in\N}$ with the following property:
for every $C \in \cR$, there exists a sequence $ \{ \uCR \}_{n\in\N} \subset H^1(C; \R^d)$ such that
\[
\uCR \to u \quad\textrm{ in }\, L^1(C;\R^d),
\]
and
\begin{equation}\label{eq:limUC}
\W_{\{\e^{\cR}_n\}}(u; C) = \lim_{n \to \infty} \f_{\e^{\mathcal{R}}_n}(\uCR, C)\,.
\end{equation}
We claim that
\begin{itemize}
\item[(C1)] the set function $\lambda:\cA(\o)\to[0,\infty)$ given by
\[
\lambda(B):=\W_{\{\e^{\cR}_n\}}(u; B)
\]
is a positive finite Radon measure absolutely continuous with respect to $\mu:=\hno \restr \partial^* A$, 
\item[(C2)] for $\hno$-a.e. $x_0\in\partial A$, it holds
\begin{equation}\label{eq:ubDiff}
\frac{d\lambda }{d \mu}(x_0) \leq \sigma(\nu(x_0))\,.
\end{equation}
\end{itemize}
This allows us to conclude. Indeed, we have that
\begin{align*}
\lim_{n \to \infty} \f_{\e^{\mathcal{R}}_n}(u^{\o}_{\e^{\cR}_n}, \o) &= \W_{\{\e^{\cR}_n\}}(u; \o)\\
& =\int_{\partial A_0} \frac{d\lambda }{d \mu}(x) \dhno(x) \\
&\leq \int_{\partial A_0} \sigma(\nu(x))\dhno(x) \\
&= \f_{0}(u).
\end{align*}

\emph{Step 1.} We first prove claim (C1).

We use the coincidence criterion in Lemma \ref{lem:coincidence} to show that $\lambda(B)$ is the restriction of a positive finite measure to $\cA(\o)$.

We will first prove (i) in Lemma \ref{lem:coincidence}. Let $U,V, W \in \cA(\o)$ be such that $\overline{U} \subset \subset V \subset W$. For $\del > 0$, let $V^{\del}$ and $W^{\del}$ be two elements of $\cR$ such that $V^{\del} \subset V$, $W^{\del} \subset W \setminus \overline{U}$, and
\begin{equation}\label{eq:hnodel}
\hno\left(\, \partial^* A_0\cap (W \setminus (V^{\del} \cup W^{\del})) \,\right) < \del.
\end{equation}

\begin{figure}\label{fig:sets}
\includegraphics[scale=0.6]{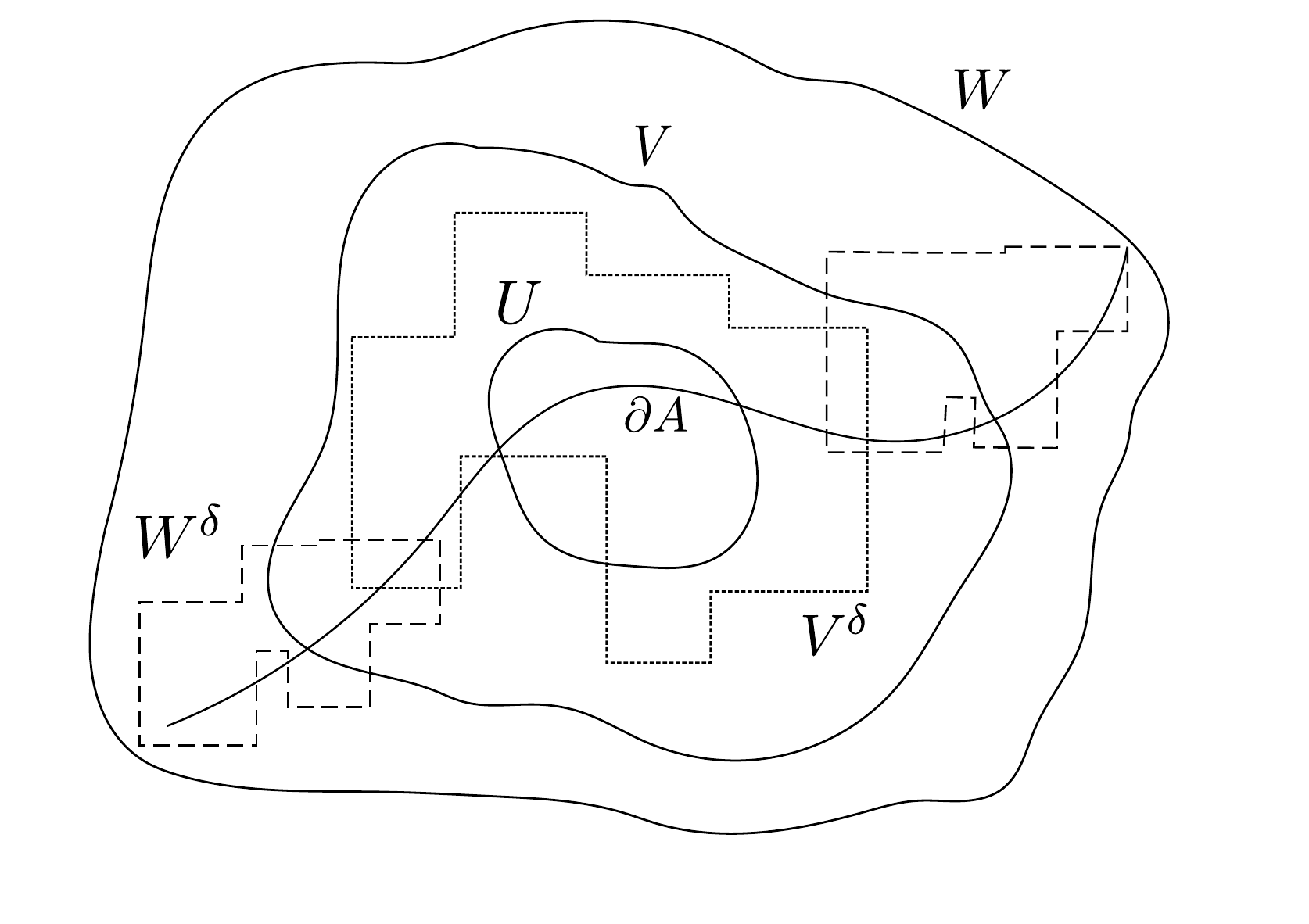}
\caption{The sets $\overline{U} \subset V \subset W$, and $V^{\del} \subset V$, $W^{\del} \subset W \setminus \overline{U}$.}
\end{figure}

Let $\{v_n\}_{n\in\N}\subset H^1(V^{\del}; \R^d)$ and $\{w_n\}_{n\in\N}\subset H^1(W^{\del}; \R^d)$ be such that $v_n\to u$ in $L^1(V^{\del}; \R^d)$, $w_n \to u$ in $L^1(W^{\del}; \R^d)$, and (see (\ref{eq:limUC}))
\begin{equation}\label{eq:witnessB}
\W_{\{\e^{\cR}_n\}}(u; V^{\del}) = \lim_{n \to \infty} \f_{\e^\cR_n}(v_n, V^\del)\,,
\end{equation}
\begin{equation}\label{eq:witnessD}
\W_{\{\e^{\cR}_n\}}(u; W^{\del}) = \lim_{n \to \infty} \f_{\e^\cR_n}(w_n, W^\del)\,.
\end{equation}

Let $\rho : \R^N \to [0, + \infty)$ be a symmetric mollifier, and define
\begin{equation}\label{eq:moll}
\xi_n(x) := \frac{1}{(\e_n^{\cR})^N} \rho \Big( \frac{x}{\e_n^{\cR}} \Big)\,.
\end{equation}
From Remark \ref{rem:DG} we can assume that $w_n = \xi_n * u $ on $\partial W^{\del}$
and $v_n=\xi_n * u $ on $\partial V^{\del}$.
Using a similar argument to the one found in Lemma \ref{lem:DeGslicing} applied to the sets $E_n:=( W^\del\setminus V^\del ) \setminus (W^\del\setminus V^\del)^{(n)}$,
and $E:=W^\del\setminus V^\del$ with boundary data $\xi_n\ast u$,
it is possible to find functions $\{\varphi_n\}\subset C^\infty(W^\delta)$ with
$\supp \nabla \varphi_n\subset L^{(i_0)}_n$ (here we are using the notation of the proof of Lemma \ref{lem:DeGslicing}) such that, if we define the function $u_n:W\to\R^d$ as
\[
u_n := \chi_{V^{\delta} \cup W^{\delta}}\left(\varphi_n v_n + (1- \varphi_n) w_n \right)
    + \chi_{(W \setminus(V^{\delta} \cup W^{\delta})}(\xi_n * u )\,,
\]
we have that $u_n\in H^1(W;\R^d)$ and
\begin{equation}\label{eq:enlayerto0}
\lim_{n\to\infty}\f_{\e^{\cR}_n}(u_n, L^{(i_0)}_n)=0\,.
\end{equation}
Notice that $u_n \to u$ in $L^1(W; \R^d)$ as $n \to \infty$.
Moreover, we get
\begin{align}\label{eq:layerBD}
\notag
\W_{\{\e^{\cR}_n\}}(u; W) &\leq \liminf_{n \to \infty}  \f_{\e^{\cR}_n}(u_n, W)\\ \notag
&\leq \liminf_{n \to \infty} \Big[\, \f_{\e^{\cR}_n}(u_n, V^\del) + \f_{\e^{\cR}_n}(u_n, W^\del)\\ \notag
&\hspace{2cm}+ \f_{\e^{\cR}_n}(u_n, W\setminus (V^\del\cup W^\del)) + \f_{\e^{\cR}_n}(u_n, L^{(i_0)}_n) \,\Big] \\\notag
&\leq \W_{\{\e^{\cR}_n\}}(u; V^{\del})+\W_{\{\e^{\cR}_n\}}(u; W^{\del})\\ 
&\hspace{2cm}+\liminf_{n \to \infty} \f_{\e^{\cR}_n}(u_n, W\setminus (V^\del\cup W^\del))
\end{align}
where in the last step we used \eqref{eq:witnessB}, \eqref{eq:witnessD} and \eqref{eq:enlayerto0}.
We see that
\begin{align}\label{eq:estperime}
\notag
\liminf_{n \to \infty}&\f_{\e^{\cR}_n}(u_n, W\setminus (V^\del\cup W^\del))\\ \notag
&=\liminf_{n \to \infty}\f_{\e^{\cR}_n}\left(\xi_n\ast u, \{x \in W\setminus (V^\del\cup W^\del) \,:\,
    \textrm{dist}(x, \partial A) \leq \e^{\cR}_n \}\right)\\ \notag
& \leq C \liminf_{n \to \infty} \frac{\l^N(\{ x \in W\setminus (V^\del\cup W^\del) \,:\,     
    \textrm{dist}(x, \partial A) \leq \e^{\cR}_n \})}{\e^{\cR}_n}\\ \notag
& = C \hno\left(\partial A\cap(W\setminus (V^\del\cup W^\del))\right)\\
&\leq C\delta\,,
\end{align}
where in the last step we used \eqref{eq:hnodel}.
Using \eqref{eq:layerBD}, \eqref{eq:estperime} and the fact that $V^\del\subset V$ and $W^\del \subset W\setminus\overline{U}$, we get
\begin{align*}
\W_{\{\e^{\cR}_n\}}(u;W) &\leq C\del + \W_{\{\e^{\cR}_n\}}(u; V^{\del})
    + \W_{\{\e^{\cR}_n\}}(u; W^{\del})\\
&\leq C\del + \W_{\{\e^{\cR}_n\}}(u;V) + \W_{\{\e^{\cR}_n\}}(u; W \setminus \overline{U})\,.
\end{align*}
Letting $\del \to 0^+$, we obtain (i).\\

We proceed to proving (ii) in Lemma \ref{lem:coincidence}. Let $U,V \in \cA(\o)$ be such that $U \cap V = \emptyset$. Fixing $\eta >0$, we can find $u_n \in H^1(U \cup V; \R^d)$ such that $u_n \to u$ and
\[
\W_{\{\e^{\cR}_n\}}(u; U \cup V) \geq \liminf_{n \to \infty} \f_{\e^\cR_n}(u_n, U \cup V) - \eta.  \] 

Then, since the restriction of $u_n$ to $U$ and $V$ converges to $u$ in these sets,
\[
\W_{\{\e^{\cR}_n\}}(u; U) \leq \liminf_{n \to \infty} \f_{\e^\cR_n}(u_n, U)
\]
and
\[\W_{\{\e^{\cR}_n\}}(u; V) \leq \liminf_{n \to \infty} \f_{\e^\cR_n}(u_n, V)\]
by definition, we have
\begin{align*}
\lambda(U) + \lambda(V) &\leq \liminf_{n \to \infty} \f_{\e^\cR_n}(u_n, U) + \liminf_{n \to \infty} \f_{\e^\cR_n}(u_n, V) \\
&\leq \liminf_{n \to \infty}  \f_{\e^\cR_n}(u_n, U \cup V) \leq \lambda(U \cup V) + \eta.
\end{align*}
Sending $\eta \to 0^+$, we conclude
\[ 
\lambda(U) + \lambda(V) \leq \lambda(U \cap V).
\]
To prove the opposite inequality, as in the proof of (i), we select $U^{\delta} \subset U$, $V^{\delta} \subset V$ with $U^{\delta}, V^{\delta} \in \cR$ and
\begin{equation}\label{eq:boundarysmall}
\hno \left( \partial^*A_0 \cap \left(\left(U \cup V \right) \setminus \left( U^\delta \cup V^\del \right) \right) \right) < \delta.
\end{equation}
Again we may select $v_n \in H^1(V^{\delta};\R^d)$ and $u_n \in H^1(U^{\delta}; \R^d)$ such that $v_n \to u$ in $L^1(V^{\delta}; \R^d)$, $u_n \to u$ in $L^1(U^{\delta}; \R^d)$ and
\begin{equation}\label{eq:limU}
\W_{\{\e^{\cR}_n\}}(u_n; U^{\delta}) \leq \liminf_{n \to \infty} \f_{\e^\cR_n}(u_n, U^{\delta}),
\end{equation}
\begin{equation}\label{eq:limV}
\W_{\{\e^{\cR}_n\}}(v_n; V^{\delta}) \leq \liminf_{n \to \infty} \f_{\e^\cR_n}(v_n, V^{\delta}).
\end{equation}
As in the proof of (i) of Lemma \ref{lem:coincidence}, we may assume without loss of generality that $u_n = \xi_n * u$ on $\partial U^{\delta}$, $v_n = \xi_n * v$ on $\partial V^{\delta}$, and we can find functions $\varphi_n \in C^{\infty}(U \cap V; [0,1])$ so that, defining
\[
w_n := \chi_{U^\delta\cup V^\delta}(\varphi_n u_n + (1-\varphi_n) v_n) + \chi_{(U \cup V) \setminus (U^{\delta} \cup V^{\delta})} \xi_n * u
\]
we have $w_n\in H^1(U\cup V;\R^d)$ and
\begin{equation}\label{eq:thinstrip}
\lim_{n \to \infty}  \f_{\e^\cR_n}(w_n, L^{(i_0)}_n) = 0,
\end{equation}
where $\nabla \varphi_n \subset L^{(i_0)}_n$, again using the notation of Lemma \ref{lem:DeGslicing}. Observing that $w_n \to u$ in $L^1(U \cup V; \R^d)$, we get
\begin{align*}
\lambda(U \cup V) &\leq \liminf_{n \to \infty} \f_{\e^\cR_n}(w_n, U \cup V) \\
&\leq \bigg[  \f_{\e^\cR_n}(u_n, U^{\delta}) + \f_{\e^\cR_n}(v_n, V^{\delta}) \\
& \hspace{1cm}+ \f_{\e^\cR_n}(w_n, (U \cup V) \setminus (U^{\delta} \cup V^{\delta})) + \f_{\e^\cR_n}(w_n, L^{(i_0}_n) \bigg] \\
&\leq \lambda(U^{\delta}) + \lambda(V^{\delta}) + \liminf_{n \to \infty} \f_{\e^\cR_n}(w_n, (U \cup V) \setminus (U^{\delta} \cup V^{\delta}))
\end{align*}
where in the last step we used \eqref{eq:limU}, \eqref{eq:limV}, and \eqref{eq:thinstrip}. Noticing that
\[ \liminf_{n \to \infty} \f_{\e^\cR_n}(w_n, (U \cup V) \setminus (U^{\delta} \cup V^{\delta})) \leq C \hno \left( \partial^*A_0 \cap \left(\left(U \cup V \right) \setminus \left( U^\delta \cup V^\del \right) \right) \right) \]
and by \eqref{eq:boundarysmall} we have
\[
\lambda(U \cup V) \leq \lambda(U^{\delta}) + \lambda(V^{\delta}) + C \delta 
\leq \lambda(U) + \lambda(V) + C \delta
\]
and, letting $\delta \to 0$, we conclude (ii).\\

We prove (iii) in Lemma \ref{lem:coincidence}. Let $\o' \subset \subset \o$. Recalling \eqref{eq:moll}, we know that
$u * \xi_n$ is constant outside a tubular neighborhood of width $\e^{\mathcal{R}}_n$ around $\partial^* A$ and that $\|\nabla(u * \xi_n)\|_{L^\infty}\leq \frac{C}{\e^{\mathcal{R}}_n}$. Thus
\begin{align}\label{eq:estlambda}
\lambda(\o')=\W_{\{\e^{\cR}_n\}}(u; \o') \leq \liminf_{n \to \infty} \f_{\e^{\mathcal{R}}_n}( u * \xi_n, \o')
    \leq C  \hno(\o' \cap \partial^* A) = C \mu(\o'). 
\end{align}
This shows, by the coincidence criterion Lemma \ref{lem:coincidence}, that $\lambda \restr \o'$ is a Radon measure.
Since $\mu$ is a finite Radon measure in $\o$ and \eqref{eq:estlambda} holds for every $\o' \subset \subset \o$,
we conclude that $\lambda$ is a finite Radon measure in $\o$ absolutely continuous with respect to $\mu$, which was the claim (C1).\\

\emph{Step 2.} We now prove (C2).
Let $x_0 \in \o \cap \partial^* A$ be on a face of $\partial^* A$ (since the set is polyhedral) and write $\nu:=\nu_A(x_0)$.
Using Proposition \ref{prop:periodW} it is possible to find a rotation $R_\nu$ and $T\in\N$ such that,
setting $Q_\nu:=R_\nu Q$, we get $Q_\nu\in\mathcal{Q}_\nu$ and
\[
W(x+nTv,p)=W(x,p)\,,
\]
for a.e. $x\in\o$, every $v\in\Lambda$ that is orthogonal to one face of $Q_\nu$, every $p\in\R^N$ and $n \in \N$.
By Remark \ref{rem:newdefnormal} it follows that for $\mu$-almost every $x_0 \in \o$,
\begin{equation}\label{eq:diffEps}
\frac{d\lambda }{d \mu }(x_0) = \lim_{\e \to 0^+} \frac{\lambda( Q_{\nu}(x_0, \e)) }{\e^{N-1}},
\end{equation}
where $Q_{\nu}(x_0, \e) := x_0 + \e Q$.
In view of Lemma \ref{lem:sigmaQ}, it is possible to find
$\{T_k\}_{k\in\N} \subset T \N$ with $T_k \to \infty$ as $k\to\infty$, and
$\{u_k\}_{k\in\N} \subset \mathcal{C}(Q_{\nu}, T_k)$ such that
\begin{align}\label{eq:sigwitness}
\sigma^{Q_\nu}(\nu) &= \lim_{k \to \infty} \frac{1}{T_k^{N-1}} \int_{T_kQ_{\nu}} \bigg[ W(y, u_k(y)) + |\nabla u_k(y)|^2 \bigg] dy \nonumber\\
&=\lim_{k \to \infty} \int_{Q_\nu} \bigg[ T_k W(T_k x, v_k(x)) + \frac{1}{T_k} |\nabla v_k(x)|^2 \bigg] dx,
\end{align}
where $v_k:Q_\nu\to\R^d$ is defined as $v_k(x) := u_k(T_kx)$ and $\sigma^{Q_\nu}(\nu)$ is defined as in Lemma \ref{lem:sigmaQ}.
Without loss of generality, by density, we can assume $u_k\in \mathcal{C}(Q_{\nu}, T_k)\cap L^\infty(T_k Q_\nu;\R^d)$.
Since the choice of mollifier $\rho\in C^\infty_c(B(0,1))$ is arbitrary by Lemma \ref{lem:indepmoll}, we will assume here that $\supp \rho \subset B(0,\frac{1}{2})$ and thus
\[
u_k \left( T_k x \right) = u_{0, \nu}(x)\quad\textrm{ if }\,\,|T_k x| \geq \frac{1}{2}\,.
\] 
For $x\in\R^N$ let $x_\nu:=x\cdot\nu$ and $x':=x-x_\nu \nu$. Moreover, set $Q'_\nu:=Q_\nu\cap\nu^\perp$.

For $t \in \left(-\frac{1}{2}, \frac{1}{2}\right)$, extend the function $x'\mapsto v_k(x' + t\nu)$ to the whole $\nu^\perp$ by periodicity, and define
\begin{equation}\label{def:vnk}
v^{(\e)}_{n,k}(x) :=
\begin{cases} 
      u_{0, \nu}(x) & \textrm{ if } |x_{\nu}| > \frac{\e^{\cR}_n T_k}{2 \e}, \\
      &\\
      v_k\Big(\frac{\e x}{\e^{\cR}_n T_k}\Big) & \textrm{ if } |x_{\nu}| \leq \frac{\e^{\cR}_n T_k}{2 \e}.
\end{cases}
\end{equation}
The idea behind the definition of the function $v^{(\e)}_{n,k}$ is the following (see Figure \ref{fig:recovery}): for every fixed $\e>0$ and $k\in\N$ we are tiling the face of $A$ orthogonal to $\nu$ with $\e^{\cR}_n$-rescaled copies of the optimal profile $u_k$.
The fact that $A$ is a $\Lambda$-polyhedral set and that $T_k\in T\N$ ensure that it is possible to use the periodicity of $W$ to estimate the energy in each cube of edge length $\frac{\e^{\cR}_n T_k}{\e}$.
The presence of the factor $\e$ in \eqref{def:vnk} localizes the function around the point $x_0$ and accommodates the blow-up method we are using to prove the limsup inequality and, because of periodicity, will play no essential role in the fundamental estimate \eqref{eq:limit}.

\begin{figure}
\includegraphics[scale=0.6]{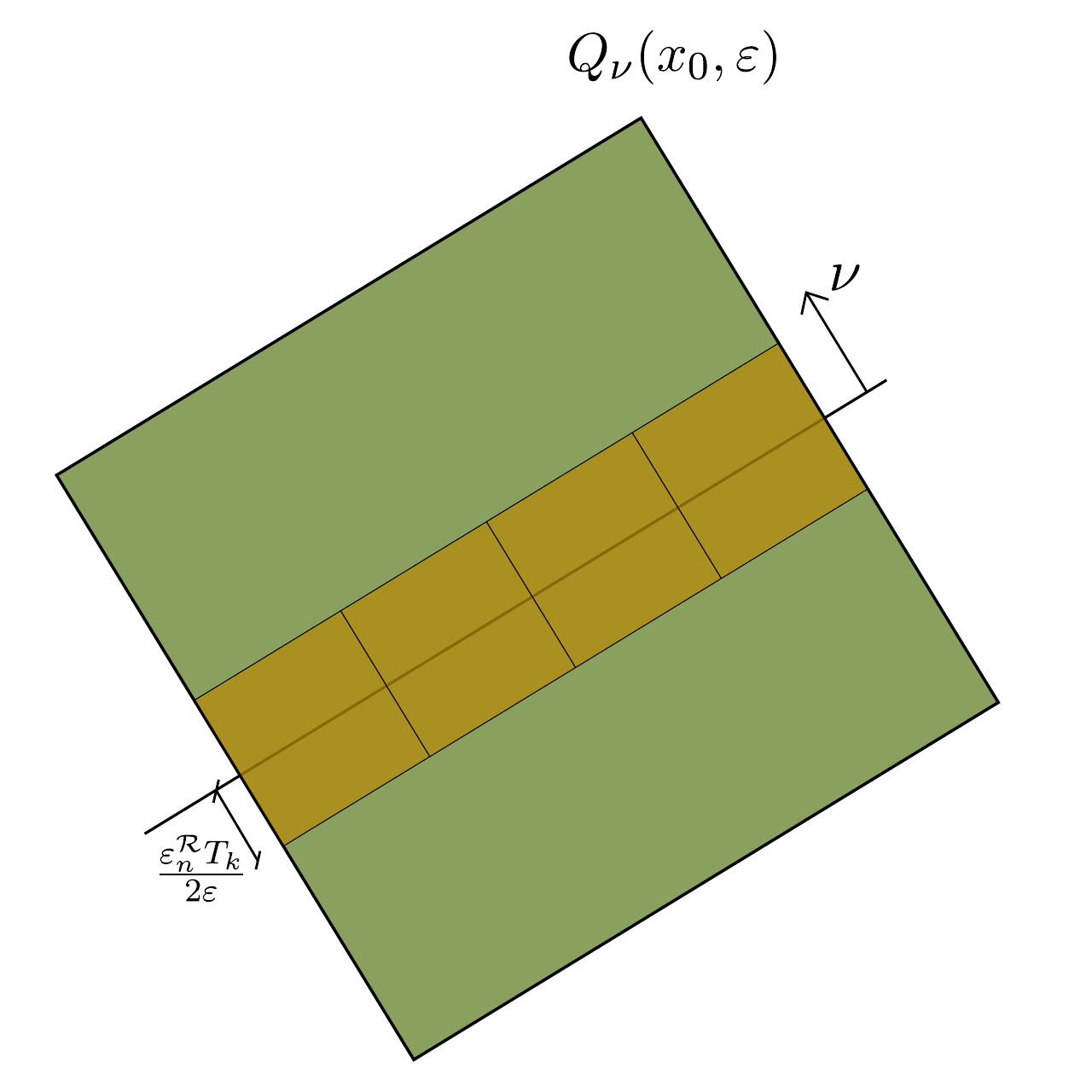}
\caption{The construction of the recovery sequence $v^{(\e)}_{n,k}$: for every $\e>0$ and $k\in\N$ fixed, we defined it as $u_{0, \nu}$ in the green region and, in each yellow square of side length $\frac{\e^{\cR}_n T_k}{\e}$, as a rescaled version of the function $u_k$.}
\label{fig:recovery}
\end{figure}

Let $m_n \in R_\nu\left(T \Z^N\right)$ and $s_n \in [0,T)^N$ be such that $\frac{x_0}{\e^{\cR}_n} = m_n + s_n$, and let
\[
x_{\e, n} := - \frac{\e^{\cR}_n}{\e} s_n\,.
\]
Note that for every $\e > 0$ we have 
\begin{equation}\label{eq:centers}
\lim_{n \to \infty} x_{\e, n} = 0. 
\end{equation}
Define the functions $u_{n, \e, k} \in H^1(Q_{\nu}(x_0, \e); \R^d)$ by
\[
u_{n,\e,k}(x) := v^{(\e)}_{n,k}\left( \frac{x-x_0}{\e} - x_{\e, n} \right).
\]
We claim that there is $\e'(x_0)$ such that for every $0< \e < \e'(x_0)$ and any $k\in\N$
\begin{equation}\label{eq:unekL1}
\lim_{n \to \infty} \|u_{n, \e, k} - u \|_{L^1(Q_{\nu}(x_0, \e); \R^d)} = 0.
\end{equation}
Since $x_0$ is on a face of $\partial^* A$, we can find $\e'$ such that $u = u_{0, \nu}(x-x_0)$ in $Q_{\nu}(x_0, \e')$. Changing variables,
\begin{align*}
&\int_{Q_{\nu}(x_0, \e)} |u_{n, \e, k}(x) - u(x)| \dd x \\
&\hspace{2cm}= \int_{(\e Q_\nu - \e x_{\e,n}) \cap \{z : |z_\nu| \leq \frac{\e^{\cR}_n T_k}{2}  \} }
    \bigg| v^{(\e)}_{n,k} \bigg( \frac{z}{\e} \bigg) - u(x_0 +  z + \e x_{\e,n}) \bigg| \dd z. \\
\end{align*}
Since the functions $v^{(\e)}_{n,k}$ are uniformly bounded with respect to $n\in\N$, we prove our claim by noticing that $|(\e Q_\nu - \e x_{\e,n}) \cap \{z : |z_\nu| \leq \frac{\e^{\cR}_n T_k}{2}  \}|\to0$ as $n\to\infty$.

Thus, using the definition of $\lambda$ and \eqref{eq:unekL1}, we get
\begin{equation}\label{eq:estdensabove}
\frac{\lambda(Q_{\nu}(x_0, \e)) }{\e^{N-1}} \leq \liminf_{n \to \infty} \frac{1}{\e^{N-1}} \f_{\e^{\cR}_n}(u_{n,\e,k}, Q_\nu(x_0,\e)). 
\end{equation}
We want to rewrite the right-hand side of \eqref{eq:estdensabove} in terms of the functions $v^{\e}_{n,k}$.
To do so, changing variables, we write
\begin{align*}
&\frac{1}{\e^{N-1}} \f_{\e^{\cR}_n}(u_{n,\e,k}, Q_\nu(x_0,\e))\\
&\hspace{0.8cm}=\int_{Q_\nu} \left[\, \frac{\e}{\e^{\cR}_n} W \bigg( \frac{x_0 + \e y}{\e^{\cR}_n}, u_{n, \e, k}(x_0+ \e y) \bigg)
    + \e \e^{\cR}_n | \nabla u_{n, \e, k}(x_0 + \e y)|^2 \,\right] \dd y \\
&\hspace{0.8cm} = \int_{Q_\nu} \left[\, \frac{\e}{\e^{\cR}_n} W \bigg( \frac{x_0 +\e y}{\e^{\cR}_n},
    v^{(\e)}_{n, k}(y - x_{\e,n}) \bigg)+ \frac{\e^{\cR}_n}{\e} | \nabla  v^{(\e)}_{n, k}(y - x_{\e,n})|^2 \,\right] \dd y\\
&\hspace{1cm}=  \int_{Q_\nu-x_{\e, n}} \left[\, \frac{\e}{\e^{\cR}_n} W \bigg( \frac{x_0 + \e (y+x_{\e,n})}{\e^{\cR}_n},
    v^{(\e)}_{n, k}(y) \bigg) + \frac{\e^{\cR}_n}{\e} | \nabla  v^{(\e)}_{n, k}(y)|^2 \,\right] \dd y \\
&\hspace{0.8cm}=  \int_{Q_\nu-x_{\e, n}} \left[\, \frac{\e}{\e^{\cR}_n} W \bigg( m_n + \frac{\e y}{\e^{\cR}_n},
    v^{(\e)}_{n, k}(y) \bigg) + \frac{\e^{\cR}_n}{\e} | \nabla  v^{(\e)}_{n, k}(y)|^2 \,\right] \dd y \\
&\hspace{0.8cm}=  \int_{Q_\nu-x_{\e, n}} \left[\, \frac{\e}{\e^{\cR}_n} W \bigg(\frac{\e y}{\e^{\cR}_n}, v^{(\e)}_{n, k}(y) \bigg)
    + \frac{\e^{\cR}_n}{\e} | \nabla  v^{(\e)}_{n, k}(y)|^2 \,\right] \dd y\\
&\hspace{0.8cm}=\f_{\frac{\e^{\cR}_n}{\e}}(v^{(\e)}_{n,k}, Q_\nu-x_{\e, n})\,,
\end{align*}
where in the second to last step we used the periodicity of $W$.

We claim that
\begin{equation}\label{eq:shiftBound}
\limsup_{k \to \infty} \limsup_{\e \to 0^+} \limsup_{n \to \infty}
    \f_{\frac{\e^{\cR}_n}{\e}}\left(v^{\e}_{n,k}, (Q_\nu-x_{\e,n})\setminus Q_\nu\right)  = 0.
\end{equation}
Indeed, using Fubini's Theorem and a change of variables, we have
\begin{align*}
&\f_{\frac{\e^{\cR}_n}{\e}}\left(v^{\e}_{n,k}, (Q_\nu-x_{\e,n})\setminus Q_\nu\right) \\
&\hspace{1cm} = \int_{-\frac{\e^{\cR}_n T_k}{2 \e}}^{\frac{\e^{\cR}_n T_k}{2 \e}} \int_{(Q'_\nu-x_{\e, n})\setminus Q'_\nu} \left[\, \frac{\e}{\e^{\cR}_n} W \bigg(\frac{\e y}{\e^{\cR}_n}, v^{(\e)}_{n, k}(y) \bigg) + \frac{\e^{\cR}_n}{\e} | \nabla  v^{(\e)}_{n, k}(y)|^2 \,\right] \dd y \\
&\hspace{1cm} = \int_{-\frac{1}{2}}^{\frac{1}{2}} \int_{(Q'_\nu-x_{\e, n})\setminus Q'_\nu} \Bigg[\, T_k W \bigg( \bigg( T_k\frac{\e x'}{\e^{\cR}_n T_k} + T_k  x_\nu \nu  \bigg), v_k \bigg( \frac{\e x'}{\e^{\cR}_n T_k} + x_\nu \nu \bigg) \bigg) \\
& \hspace{7cm}+ \frac{1}{T_k} \bigg| \nabla v_k \bigg( \frac{\e x'}{\e^{\cR}_n T_k} + x_\nu \nu  \bigg) \bigg|^2 \,\Bigg]\dd \mathcal{H}^{N-1} (x') \dd x_{\nu}\,.
\end{align*}
Fix $k\in\N$. By \eqref{eq:centers}, for each $\e > 0$, let $n( \e) \in \N$ be such that $|x_{\e, n}| < \e$ for all $n \geq n(\e)$.
In particular, we have $(Q'_\nu-x_{\e, n})\setminus Q'_\nu \subset (1+\e)Q'_\nu \setminus Q'_\nu$.
Set $\mu^{\e,k}_n:=\frac{\e}{\e^{\cR}_n T_k}$. For every $x_\nu\in(-\frac{1}{2},\frac{1}{2})$, the functions
$f,g:Q'_\nu\to\R$ defined by
\[
f(x'):= W\left( (T_k x' + T_k x_\nu \nu ), v_k(x' + x_\nu \nu  ) \right),\quad
g(x'):= \bigg| \nabla v_k \bigg(  x' +\nu  x_\nu  \bigg) \bigg|^2
\]
are $Q'_{\nu}$ periodic. The Riemann-Lebesgue Lemma yields
\begin{align}\label{eq:RLf}
&\lim_{n\to\infty} \int_{U} f(\mu^{\e,k}_n x') \dd \mathcal{H}^{N-1}( x') \nonumber \\
&\hspace{3cm}= |U|\int_{Q'_\nu} W\left( (T_k x' + T_k x_\nu \nu), v_k(x' + x_\nu \nu) \right) \dd \mathcal{H}^{N-1}(x')
\end{align}
and
\begin{equation}\label{eq:RLg}
\lim_{n\to\infty} \int_{U} g( \mu^{\e,k}_n  x') \dd \mathcal{H}^{N-1}( x') = |U|\int_{Q'_\nu} \left| \nabla v_k \left( x' + x_\nu \nu \right) \right|^2 \dd \mathcal{H}^{N-1}( x') \,,
\end{equation}
for every open and bounded set $U\subset\R^{N-1}$. Thus we get
\begin{align*}
&\limsup_{n \to \infty}  \f_{\frac{\e^{\cR}_n}{\e}}\left(v^{\e}_{n,k}, (Q_\nu-x_{\e,n})\setminus Q_\nu\right)\\
& \hspace{1cm} \leq \limsup_{n \to \infty} \int_{-\frac{1}{2}}^{\frac{1}{2}} \int_{(1+ \e) Q'_{\nu} \setminus Q'_{\nu}} \Bigg[\, T_k W \bigg( \bigg( T_k\frac{\e x'}{\e^{\cR}_n T_k} + T_k  x_\nu \nu \bigg), v_k \bigg( \frac{\e x'}{\e^{\cR}_n T_k} + x_\nu \nu \bigg) \bigg) \\
& \hspace{7cm}+ \frac{1}{T_k} \bigg| \nabla v_k \bigg( \frac{\e x'}{\e^{\cR}_n T_k} + x_\nu \nu \bigg) \bigg|^2 \,\Bigg]\dd \mathcal{H}^{N-1} (x') \dd x_{\nu} \\
& \hspace{1cm} \leq \left|(1+\e)Q'_{\nu} \setminus Q'_{\nu}\right|  \left( \int_{Q_\nu} \left[ T_k W \left(T_k x, v_k(x) \right) + \frac{1}{T_k} \left| \nabla v_k(x)\right|^2 \right] dx \right).
\end{align*}
Sending $\e\to0$ we obtain \eqref{eq:shiftBound}.

Finally, we claim that 
\begin{equation}\label{eq:limit}
\limsup_{k\to\infty}\limsup_{\e\to0^+}\limsup_{n\to\infty} \f_{\frac{\e^{\cR}_n}{\e}}\left(v^{\e}_{n,k}, Q_\nu \right)
    =\sigma^{Q_\nu}(\nu)\,.
\end{equation}
Recalling the definition of the functions $v_{n,k}^{(\e)}$ (see \eqref{def:vnk}) and using Fubini's Theorem we can write
\begin{align*}
\f_{\frac{\e^{\cR}_n}{\e}}\left(v^{\e}_{n,k}, Q_\nu \right)&=
    \f_{\frac{\e^{\cR}_n}{\e}}\left(v^{\e}_{n,k}, Q_\nu\cap\left\{ |x_\nu|\leq \frac{\e^{\cR}_n T_k}{2\e} \right\} \right) \\
&= \int_{-\frac{\e^{\cR}_n T_k}{2 \e}}^{\frac{\e^{\cR}_n T_k}{2 \e}} \int_{Q'_\nu} \bigg[ \frac{\e}{\e^{\cR}_n}
    W \bigg(  \frac{\e  x'}{\e^{\cR}_n} + \frac{\e  x_\nu \nu}{\e^{\cR}_n} , v_k \bigg( \frac{\e x'}{\e^{\cR}_n T_k} + \frac{\e x_\nu \nu}{\e^{\cR}_n T_k}\bigg) \bigg) \\
&\hspace{4cm} + \frac{\e}{\e^{\cR}_n T_k^2} \bigg|\nabla v_k\bigg( \frac{\e x'}{\e^{\cR}_n T_k} +
    \frac{\e x_\nu \nu}{\e^{\cR}_n T_k}\bigg)\bigg|^2 \bigg] \dd x' \dd x_\nu \\
&= \int_{-\frac{1}{2}}^{\frac{1}{2}} \int_{Q'_\nu} \bigg[ T_k W \bigg(  T_k\frac{\e x'}{\e^{\cR}_n T_k} + T_k y_\nu \nu ,v_k \bigg( \frac{\e x'}{\e^{\cR}_n T_k} + y_\nu \nu  \bigg) \bigg) \\
&\hspace{4cm}+ \frac{1}{T_k} \bigg|\nabla v_k\bigg( \frac{\e x'}{\e^{\cR}_n T_k} + y_\nu \nu \bigg) \bigg|^2 \bigg] \dd x' \dd y_\nu .
\end{align*}
Thus, using \eqref{eq:RLf} and \eqref{eq:RLg} (that are independent of $\e$), we obtain
\begin{align*}
\notag
\lim_{k \to \infty} \lim_{\e \to 0^+} \lim_{n \to \infty} \f_{\frac{\e^{\cR}_n}{\e}}\left(v^{\e}_{n,k}, Q_\nu \right)&= \lim_{k \to \infty} \int_{Q_\nu} \bigg( T_k W(T_k x, v_k(x)) + \frac{1}{T_k} |\nabla v_k(x)|^2 \bigg) \dd x\\
& = \sigma^{Q_\nu}(\nu).
\end{align*}

From \eqref{eq:estdensabove}, \eqref{eq:shiftBound} and \eqref{eq:limit} we get
\begin{align}\label{eq:lastestimate}
\lim_{\e\to0}\frac{\lambda(Q_\nu(x_0, \e)) }{\e^{N-1}}&\leq
    \limsup_{k\to\infty}\limsup_{\e\to0^+}\limsup_{n\to\infty}\frac{1}{\e^{N-1}} \f_{\e^{\cR}_n}(u_{n,\e,k}, Q_\nu(x_0,\e)) 
        \nonumber\\
&\leq \sigma^{Q_\nu}(\nu)\,.
\end{align}
In order to conclude, we use Lemma \ref{lem:sigmaQ} to find a sequence $\{Q_n\}_{n\in\N}\subset\mathcal{Q}^\Lambda_\nu$ such that $\sigma^{Q_n}(\nu)\to\sigma(\nu)$ as $n\to\infty$. Using \eqref{eq:lastestimate} we obtain for every $n\in\N$
\[
\frac{d \lambda}{d \mu}(x_0) = \lim_{\e\to0}\frac{\lambda(Q_n(x_0, \e)) }{\e^{N-1}}\leq \sigma^{Q_n}(\nu)
\]
and, letting $n \to \infty$ we have
\[\frac{d \lambda}{d \mu}(x_0) \leq \sigma(\nu). \]
Using the Urysohn property, we conclude that if the set $A:=\{u=a\}$ is $\Lambda$-polyhedral, then there exists a sequence $\{u_n\}_{n\in\N}\subset H^1(\o;\R^d)$ with $u_n\to u$ in $L^1(\o;\R^d)$ such that
\[
\limsup_{n\to\infty}\f_{\e_n}(u_n)\leq\f_0(u).
\]
\\

\emph{Case 2.} We now consider the general case of a function $u\in BV(\o;\{a,b\})$.
Using Lemma \ref{lem:densitysets} it is possible to find a sequence of functions $\{v_k\}_{k\in\N}\subset BV(\o;\{a,b\})$ with the following properties: the set $A_k:=\{v_k=a\}$ is a $\Lambda$-polyhedral set and, setting $A:=\{u=a\}$, we have
\[
\lim_{k\to\infty}\|\chi_{A_k}-\chi_A\|_{L^1(\o)}=0\,,\quad\quad\quad
\lim_{k\to\infty}|P(A_k;\o)-P(A;\o)|=0\,.
\]
From the result of Case 1, for every $k\in\N$ it is possible to find a sequence
$\{u^{k}_n\}_{n\in\N}\subset H^1(\o;\R^d)$ with $u^{k}_n\to v_k$ as $n\to\infty$, such that
\[
\limsup_{n\to\infty} \f_{\e_n}(u^{k}_n)\leq \f_0(v_k)\,.
\]
Choose an increasing sequence $\{n(k)\}_{k\in\N}$ such that, setting $u_k:=u^{k}_{n(k)}$,
\begin{equation}\label{eq:estlimsup}
\|u_k-u\|_{L^1}\leq\frac{1}{k}\,,\quad\quad\quad \f_{\e_n}(u^{k}_n)\leq \f_0(v_k)+\frac{1}{k}\,.
\end{equation}
Recalling that the function $\sigma$ is upper semi-continuous on $\S^{N-1}$ (see Proposition \ref{prop:sigma}), from Theorem \ref{thm:resh} and \eqref{eq:estlimsup} we get
\[
\limsup_{k\to\infty} \f_{\e_{n(k)}}(u^k)\leq \limsup_{k\to\infty}  \f_0(v_k) \leq \f_0(u)\,.
\]
This concludes the proof of the limsup inequality.
\end{proof}

\section{Continuity of $\sigma$}

To prove that the function $\nu\mapsto\sigma(\nu)$ is continuous, notice that Theorem \ref{thm:main} implies, in particular, that the functional $\f_0$ is lower semi-continuous with respect to the $L^1$ convergence. It then follows from \cite[Theorem 5.11]{AFP} that the function
$\sigma$, when extended $1$-homogeneously to the whole $\R^N$, is convex.
Since $\sigma(\nu)<\infty$ for every $\nu\in\S^{N-1}$ (see Lemma \ref{lem:estimatesigma}), we also deduce that $\sigma$ is continuous.

For the convenience of the reader, we recall here the argument used in \cite[Theorem 5.11]{AFP} to prove convexity.
Take $v_0, v_1, v_2\in\R^N$ such that $v_0=v_1+v_2$.
We claim that $\sigma(v_0)\leq \sigma(v_1)+\sigma(v_2)$. Using the $1$-homogeneity of $\sigma$, this is equivalent to convexity.
To prove the claim, let $E:=\{x\in\o \,:\, x\cdot \nu_0\leq\alpha \}$, where $\alpha\in\R$ is such that $\o\setminus E\neq\emptyset$ and $\o\cap E\neq\emptyset$.
Let $X\subset\R^N$ be the the two dimensional space generated by $\nu_1$ and $\nu_2$, consider the unit two dimensional square $Q'$ and a triangle $T$ with outer normals $-\frac{\nu_0}{|\nu_0|}, \frac{\nu_1}{|\nu_1|}$ and $\frac{\nu_2}{|\nu_2|}$, and such that
\[
1,\quad \frac{|\nu_1|}{|\nu_0|},\quad \frac{|\nu_2|}{|\nu_0|}
\]
are the lengths of the side of $T$ orthogonal to $\nu_0, \nu_1$, and $\nu_2$ respectively.
Let $Q\subset\R^N$ be the unit cube and  $\widetilde{Q}:=\left\{ x'\in\R^{N-2} : (0,0,x') \in Q  \right\}$.
Let $R:\R^N\to\R^N$ be a rotation such that
$R\left(  \left\{  (x_1,x_2,0,\dots,0)\in\R^N : (x_1,x_2)\in (-1/2, 1/2)^2 \right\} \right) = Q'$.
Let $z\in\R^N$ and $r>0$ be such that  $z+rQ\subset\o\setminus E$.
Then there exists $\lambda$ such that $\lambda T\subset r Q'$.
For $n\in\N$, let
\[
E_{n}:=E\cup\bigcup_{i=1}^n\left(z_i+\frac{1}{n}\left( \lambda T \times \widetilde{Q} \right) \right),
\]
where the $z_i$'s are such that the elements in the second union are pairwise disjoint and $z_i+\left( \lambda T \times \widetilde{Q} \right) \subset z+rQ$.
It can be shown that $\chi_{E_n}\to\chi_E$, so that by lower semi-continuity of $\f_0$ we obtain
\[
0\leq \liminf_{n\to\infty} \left[\, \f_0(\chi_{E_n}) - \f_0(\chi_E)    \,\right] 
= \frac{1}{|\nu_0|}[\sigma(\nu_1)+\sigma(\nu_2)-\sigma(\nu_0)].
\]
This proves the claim.


\subsection*{Acknowledgement}
We would like to thank the Center for Nonlinear Analysis at Carnegie Mellon University for its support
during the preparation of the manuscript.
Riccardo Cristoferi, Irene Fonseca and Adrian Hagerty were supported by the National Science Foundation under Grant No. DMS-1411646.


\bibliographystyle{siam}
\bibliography{Bibliography}

\end{document}